\def\R{\mathbb{R}}
\def\N{\mathbb{N}}
\def\Co{\mathbb{C}}
\renewcommand{\d}{\,{\rm d}} 
\newcommand{\sph}[1]{\mathbb{S}^{#1}}
\providecommand{\les}{\lesssim}
\newtheorem{theorem}{Theorem}[section]
\newtheorem{corollary}[theorem]{Corollary}
\newtheorem{remark}[theorem]{Remark}
\newtheorem{proposition}[theorem]{Proposition}
\newtheorem{lemma}[theorem]{Lemma}
\numberwithin{equation}{section}
\title{The Stein--Tomas inequality under the effect of symmetries}
\author{Rainer Mandel}
\address{
        Karlsruhe Institute of Technology\\
        Institute for Analysis\\
        Englerstrasse 2\\
        D-76131 Kalrsruhe, Germany}
\email{rainer.mandel@kit.edu}
\author{Diogo Oliveira e Silva}
\address{ 
Departamento de Matemática\\ 
Instituto Superior Técnico\\
Av. Rovisco Pais\\ 
1049-001 Lisboa, Portugal} 
\email{diogo.oliveira.e.silva@tecnico.ulisboa.pt}
\begin{document}

\subjclass[2010]{42B10}
\keywords{Stein--Tomas inequality, sharp restriction theory, symmetry, orthogonal group, maximizers, maximizing sequences.}
\begin{abstract}
We prove new Fourier restriction estimates to the unit sphere $\sph{d-1}$ on the class of $O(d-k)\times O(k)$-symmetric functions, for every $d\geq 4$ and $2\leq k\leq d-2$.
As an application, we establish the existence of maximizers for the endpoint Stein--Tomas inequality within that class.
Moreover, we construct examples showing that the range of Lebesgue exponents in our estimates is sharp.
\end{abstract}

\maketitle

\section{Introduction}

The Fourier restriction conjecture predicts the validity of the estimate
\begin{align}\label{eq:FRC}
\left(\int_{\mathbb{S}^{d-1}} |\widehat{f}(\omega)|^q\, {\rm d}\sigma(\omega)\right)^{\frac1q}\leq C(d,p,q) \|f\|_{L^p(\R^d)},
\end{align}
as long as the dimension $d\geq 2$ of the ambient Euclidean space $\R^d$ and the Lebesgue exponents $p,q\in[1,\infty]$ satisfy the following conditions:
\begin{equation}\label{eq:FRCNumerology}
 \frac1p>\frac{d+1}{2d} \quad\text{and}\quad \frac{d+1}{p}+\frac{d-1}{q}\geq d+1.
 \end{equation}
Integration on the left-hand side of~\eqref{eq:FRC} is with respect to the usual surface measure $\sigma$ on
the unit sphere $\sph{d-1}:=\{\omega\in\R^d: |\omega|=1\}$.
The restriction conjecture has a rich history, and is remarkable in its numerous connections and applications. 
It exhibits deep links to Bochner--Riesz summation methods
 and to decoupling phenomena for the Fourier transform, and is known to imply the Kakeya conjecture.
Despite the great deal of attention received by this circle of problems during the past four decades, the restriction conjecture has been established only when $d=2$ (see \cite{Fef} for the non-endpoint case $p'>3q$, and \cite{CS72, Zy74} for the endpoint $p'=3q$) and remains an open question in dimensions $d\geq 3$. 
For further information on the restriction problem, we refer the interested reader to the survey \cite{Ta04}
and the recent account \cite{St19}.

\medskip

The special case $q=2$ in~\eqref{eq:FRC} is well understood and of particular importance.
If $d\geq 2$ and $1\leq p\leq \frac{2(d+1)}{d+3}$, then the classical Stein--Tomas inequality \cite{St93,
To75} states the existence of a constant $C(d,p)<\infty$, such that
\begin{equation}\label{eq:TSgen}
\left(\int_{\sph{d-1}} |\widehat{f}(\omega)|^2\d\sigma(\omega)\right)^{\frac12}\leq C(d,p) \|f\|_{L^p(\R^d)},
\end{equation}
for every $f\in L^p(\R^d)$.
A well-known construction of Knapp (see \cite{St77}) reveals that the range of Lebesgue exponents is sharp in this case.
In particular, estimate \eqref{eq:TSgen} is of endpoint type when $p=\frac{2(d+1)}{d+3}$, in the sense that it becomes false if either of the exponents $2,\frac{2(d+1)}{d+3}$ is increased.
The Stein--Tomas inequality finds applications in harmonic analysis and PDEs.
In particular, it underlies most of the early progress towards the Fourier restriction conjecture; see \cite{Ta04}.
The argument is robust enough to be applied to other manifolds, e.g.\@ to the paraboloid, the
cone, and the hyperboloid, in which case it implies the foundational Strichartz
estimates for the Schr\"odinger, Wave, and Klein--Gordon equations, respectively;  see \cite{St77}.
Moreover, inequality  \eqref{eq:TSgen}  has been generalized to a variety of other contexts, 
and found surprising applications ranging from fractal geometry \cite{Mo00} to number theory \cite{Gr05},
among many others.

\medskip

Set $p_d:=\frac{2(d+1)}{d+3}$. The optimal Stein--Tomas constant,
\[ 
{{\bf T}_d := {\bf T}_d(p_d), \text{ {{where}} }
  {\bf T}_d(p) := \sup_{0\neq f\in L^{p}(\R^d)} \frac{\left(\int_{\sph{d-1}}
  |\widehat{f}(\omega)|^2\d\sigma(\omega)\right)^{\frac12}}{\|f\|_{L^p(\R^d)}},} 
\] 
has attracted a great deal of attention in the recent literature.
In the lowest dimensional cases $d\in\{2,3\}$, the dual exponent $p_d'=\frac{2(d+1)}{d-1}$ is an even
integer, and the adjoint Stein--Tomas inequality can be reformulated in terms of multilinear convolution
operators. Following this path, Christ--Shao \cite{CS12} and Shao \cite{Sh16} established the precompactness
of maximizing sequences (modulo symmetries) for ${\bf T}_{d}$, and therefore the existence of maximizers,
when $d=3$ and $d=2$, respectively. The exact form of the maximizers for ${\bf T}_3$ was subsequently
determined by Foschi \cite{Fo15} via a remarkable geometric argument, but $d=3$ remains the unique dimension
for which such a characterization in known. In fact, even the mere existence of maximizers for ${\bf T}_d$ is
an outstanding open problem for every $d\geq 4$. Partial progress was recently {{obtained}} by
Frank--Lieb--Sabin \cite{FLS16}, who proved that if a well-known conjecture about the optimal constant in the Strichartz
inequality is true, then maximizers for ${\bf T}_d$ exist.
More precisely, the main theorem in \cite{FLS16} states the following: If Gaussians maximize the Strichartz
inequality for the Schr\"odinger equation in $\R^d$, then maximizing sequences for ${\bf T}_d$, normalized in
$L^{p_d}(\R^d)$, are precompact in $L^{p_d}(\R^d)$ up to translations and, in particular, maximizers for ${\bf T}_d$ exist.
It remains an open problem to turn this conditional result into an unconditional one. 

\subsection{Setting}
Given a subgroup $G\subset O(d)$ of the orthogonal group, a function $f:\R^d\to\Co$ is said to be {\it
$G$-symmetric in $\R^d$} if  $f\circ A=f$ holds for every $A\in G$.
An especially interesting situation arises when considering the subgroup $G_k:=O(d-k)\times O(k)$ 
for some $k\in\{0,1,\ldots,d\}$.
In this paper, we are interested in restriction estimates to the unit sphere,
\begin{equation}\label{eq:GkRestriction}
\left(\int_{\mathbb{S}^{d-1}} |\widehat{f}(\omega)|^q\, {\rm d}\sigma(\omega)\right)^{\frac1q}\leq C(k,d,p,q) \|f\|_{L^p(\R^d)},
\end{equation}
which hold in the class of $G_k$-symmetric functions. 
We are led to define the Banach space
\[L^{p}_{G_k}(\R^d):=\{f\in L^p(\R^d): f \text{ is } G_k\text{-symmetric} \}.\]
The cases $k\in\{0,d\}$ correspond to radial functions
on $\R^d$. If $f\in L^p(\R^d)$ is radial, then $\widehat{f}$ is continuous on $\R^d\setminus\{0\}$
whenever $1\leq p<\frac{2d}{d+1}$; see \cite[Prop.\@ 5.1]{SSIV}.
In particular, inequality \eqref{eq:GkRestriction} holds for radial functions provided $1\leq p<\frac{2d}{d+1}$
and $1\leq q\leq\infty$. This range of exponents is in fact optimal, since the radially symmetric 
counterexample from~\eqref{eq:IdNec} reveals that the adjoint of the restriction operator fails to be
bounded from $L^{q'}(\sph{d-1})$ to $L^{p'}(\R^d)$, for every $1\leq q\leq \infty$ and $p\geq
\frac{2d}{d+1}$. Thus the $L^p$--$L^q$ mapping properties of the restriction operator in the radial cases
$k\in\{0,d\}$ are completely understood. The cases $k\in\{1,d-1\}$ are likewise special.
Since Knapp's construction in $\R^d$ is rotationally invariant with respect to $d-1$ variables, {{we do not believe that $G_1$-symmetry or $G_{d-1}$-symmetry allows for a larger range of Lebesgue exponents on which Fourier restriction estimates can hold.}} 
In fact, in Remark~\ref{rem:G1Knapp} below we adapt  Knapp's construction to the $G_k$-symmetric setting, $k\in\{1,d-1\}$, thus revealing that no estimate beyond those predicted by the restriction conjecture is possible.
For this reason, we will focus on the situation when $k\in\{2,3,\ldots,d-2\}$.

\subsection{Results}

Our first result addresses the Stein--Tomas regime $q=2$.
 
\begin{theorem}\label{thm:main}
Let $d\geq 4, k\in\{2,3,\ldots,d-2\}$, and $m:=\min\{d-k,k\}$. 
Then the estimate
\begin{equation}\label{eq:NewTS}
\left(\int_{\mathbb{S}^{d-1}} |\widehat{f}(\omega)|^2\, {\rm d}\sigma(\omega)\right)^{\frac12}\leq C(k,d,p) \|f\|_{L^p(\R^d)}
\end{equation}
holds for every $G_k$-symmetric function $f:\R^d\to\Co$ if $1\leq p\leq \frac{2(d+m)}{d+m+2}$.
\end{theorem}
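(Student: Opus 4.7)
\medskip
\noindent\textbf{Proof plan.}

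The plan is to adapt the classical Tomas $TT^*$ argument to the $G_k$-symmetric setting, where the gain from the extra symmetry is extracted via a bi-radial analysis of the Fourier transform of surface measure. By standard duality and the $TT^*$ identity, the theorem at the endpoint $p_0 := \frac{2(d+m)}{d+m+2}$ is equivalent to the convolution estimate $\|f * \widehat{d\sigma}\|_{L^{p_0'}(\R^d)} \lesssim \|f\|_{L^{p_0}(\R^d)}$ for $G_k$-symmetric $f$; the full range $1 \leq p \leq p_0$ then follows by H\"older's inequality, using that $d\sigma$ is a finite measure. Performing a Littlewood--Paley decomposition $\widehat{d\sigma} = \sum_{j \geq 0} K_j$ with $K_j := \psi_j(|\cdot|) \widehat{d\sigma}$ supported in $\{|x| \sim 2^j\}$, and setting $T_j f := f * K_j$, it suffices to show $\|T_j f\|_{L^{p_0'}} \lesssim 2^{-\epsilon j}\|f\|_{L^{p_0}}$ for some $\epsilon > 0$. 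This will follow by Riesz--Thorin interpolation between the standard Plancherel bound $\|T_j f\|_{L^2} \lesssim 2^j \|f\|_{L^2}$ (valid for all $f$) and an improved estimate tailored to $G_k$-symmetric inputs.

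The improved estimate is extracted by exploiting the bi-radial structure of $f$: writing $f(y) = F(|y'|,|y''|)$, one has
\[
(T_j f)(x) = c \int_0^\infty\!\int_0^\infty F(r,s)\, I_{j,x}(r,s)\, r^{d-k-1} s^{k-1}\,dr\,ds,
\]
where the $G_k$-averaged kernel
\[
I_{j,x}(r,s) := \int_{\sph{d-k-1} \times \sph{k-1}} K_j\bigl(x - (r\omega', s\omega'')\bigr)\,d\omega'\,d\omega''
\]
is itself an oscillatory integral over a product of spheres. The phase $\rho = |x - (r\omega', s\omega'')|$ has critical points antipodal to the directions of $x'$ and $x''$ in each factor, non-degenerate with Hessian eigenvalues controlled by $|x'|r/\rho$ and $|x''|s/\rho$. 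Stationary phase on the $\sph{d-k-1}$-factor gives a gain of $(d-k-1)/2$, and on the $\sph{k-1}$-factor a gain of $(k-1)/2$; together with the baseline $\widehat{d\sigma}$-decay of $(d-1)/2$, this effectively raises the ambient Stein--Tomas dimension from $d$ to $d+m-1$, matching the exponent $p_0$.

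\medskip
\noindent\textbf{Main obstacle.} The principal difficulty lies in the rigorous justification of the improved bound uniformly across all geometric configurations of $(x,r,s)$, including boundary regimes where one or both of the Hessian parameters are small so that stationary phase does not directly apply, and regimes where the support constraint imposed by $K_j$ confines the integration near the boundary of the spheres. A careful case analysis shows that the uniform estimate is ultimately controlled by the smaller factor sphere $\sph{m-1}$, explaining the appearance of $m = \min(d-k,k)$ in the theorem rather than $\max(d-k,k)$: in the worst-case regime, one of the two stationary phase gains degenerates, and only the smaller factor contributes to the oscillatory cancellation. Summation of the resulting dyadic estimates over $j \geq 0$ then closes the argument.
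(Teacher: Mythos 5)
Your proposal takes a genuinely different route from the paper. You propose the classical Tomas $TT^*$ scheme: dualize to a convolution estimate against $\widehat{d\sigma}$, dyadically decompose $\widehat{d\sigma} = \sum_j K_j$, and gain summability by improving the $L^1 \to L^\infty$ bound on the $G_k$-averaged kernel via stationary phase on the product $\sph{d-k-1}\times\sph{k-1}$. The paper does none of this. It works directly on the restriction side: Lemma \ref{lem:GkSymFT} writes $\widehat{f}$ as an explicit double Bessel integral, Lemma \ref{lem:HardBessel} splits each Bessel factor into an oscillatory term plus a well-decaying remainder, and the resulting five pieces are bounded one at a time. The oscillatory--oscillatory piece, which is the critical one, is controlled not by stationary phase but by a weighted two-dimensional restriction-type operator $\mathcal R_{\alpha,\beta}$ (Section \ref{sec:WRtypeE}), whose boundedness is established via oscillatory-integral lemmas, the Stein--Weiss inequality, and Stein's complex interpolation between an $L^2$ estimate and a Carleson--Sj\"olin-type $L^{4/3}$ estimate. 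Your approach, if it could be carried through, would be more in line with textbook Stein--Tomas, whereas the paper's approach is more hands-on and tied to the Bessel structure.

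However, there are two concrete gaps in your proposal. First, the dyadic decomposition combined with Riesz--Thorin interpolation between the Plancherel bound $\|T_j\|_{L^2\to L^2}\lesssim 2^j$ and an improved $L^1\to L^\infty$ bound $\lesssim 2^{-j(d+m-2)/2}$ gives, at the endpoint $p_0 = \frac{2(d+m)}{d+m+2}$, exactly $\|T_j\|_{L^{p_0}\to L^{p_0'}}\lesssim 1$ with no geometric decay in $j$, so the sum over $j$ does not converge. Reaching the endpoint requires the additional Stein complex-interpolation device (as in the classical endpoint Stein--Tomas proof, and as used by the paper in Proposition \ref{prop:SteinInterpolation} and Corollary \ref{cor:WeightedCor2}). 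Your ``Riesz--Thorin'' plan gets you only the open range $p<p_0$, which is weaker than the stated theorem. Second, and more seriously, the pointwise stationary-phase gain you describe is \emph{not} uniform. Carrying out stationary phase on the product of spheres gives a factor $\rho^{-(d-1)/2}\,(|x'|r/\rho)^{-(d-k-1)/2}\,(|x''|s/\rho)^{-(k-1)/2}$, and the extra decay collapses whenever $|x'|r/\rho\lesssim 1$ or $|x''|s/\rho\lesssim 1$ (small orbit regimes), in which case the averaged kernel decays no faster than $\widehat{d\sigma}$ itself. Consequently $\sup_{x,r,s}|\tilde K_j|$ is still only $O(2^{-j(d-1)/2})$, and the improved $L^1\to L^\infty$ bound you want simply is not true. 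To rescue the scheme one would need to pass to weighted $L^p$ estimates on $\tilde K_j$ in $(r,s)$ that exploit that the degenerate regimes have small measure against $r^{d-k-1}s^{k-1}\d r\d s$ --- which is essentially the content the paper encodes in its weighted Hardy--Littlewood--Sobolev lemmas and in Proposition \ref{prop:GenSB}. As written, ``a careful case analysis shows'' is doing all of the actual work, and the conclusion does not yet follow.
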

 
Given that $\frac{2(d+m)}{d+m+2}$ is strictly larger than the Stein--Tomas exponent $p_d=\frac{2(d+1)}{d+3}$, 
Theorem~\ref{thm:main} improves upon \eqref{eq:TSgen}. To the best of our knowledge, this result is new in
every dimension $d\geq 4$. 
\medskip

As an {immediate} application of Theorem~\ref{thm:main}, 
one may deduce improved mapping properties of the Helmholtz resolvent acting on $G_k$-symmetric functions.
More precisely, combining Theorem~\ref{thm:main} with {\cite[Theorem~1.2]{WY20}} for the {\it admissible extension triple} $(G,q,Q)=(G_k,(\frac{2(d+m)}{d+m+2})',{\mathbbm{1}})$ according to {\cite[Definition~1.1]{WY20}},
we obtain boundedness of the Helmholtz resolvent
$$ 
  \mathcal R:L^p_{G_k}(\R^d)\to L^{p'}_{G_k}(\R^d)
  {\text{ as long as }} \frac{2d}{d+2}\leq p<\frac{2d(d+m)}{d^2+(m+2)d+m-1}.
$$  
If $k\in\{2,3,\ldots,d-2\}$, then $m=\min\{d-k,k\}\geq 2$ and this upper bound for $p$ is strictly {larger} than $\frac{2(d+1)}{d+3}$,
which {in turn} is best possible in the non-symmetric setting \cite{KRS87}. 

Next, we apply our $G_k$-symmetric
Stein--Tomas inequality to establish the precompactness of maximizing sequences for the constrained
optimization problem 
\[
  {\bf T}_{d,k}(p) := \sup_{0\neq f\in L^p_{G_k}(\R^d)} \frac{\left(\int_{\sph{d-1}}
|\widehat{f}(\omega)|^2\d\sigma(\omega)\right)^{\frac12}}{\|f\|_{L^{p}(\R^d)}},
\]
and consequently the unconditional existence of maximizers for ${\bf T}_{d,k}(p)$. This is the content of
our second result.

\begin{theorem}\label{thm:G_kExistence}
Let $d\geq 4$, $k\in\{2,3,\ldots,d-2\}$, $m:=\min\{d-k,k\}$, and $1\leq p<\frac{2(d+m)}{d+m+2}$. 
Maximizing sequences for ${\bf T}_{d,k}(p)$, normalized in $L^p(\R^d)$, are precompact in
$L_{G_k}^p(\R^d)$.
In particular, maximizers for ${\bf T}_{d,k}(p)$ exist.
\end{theorem}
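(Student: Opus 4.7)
The plan is a concentration-compactness argument that exploits the strict subcriticality $p<p_c:=\frac{2(d+m)}{d+m+2}$ afforded by Theorem~\ref{thm:main}. Fix a normalized maximizing sequence $(f_n)\subset L^p_{G_k}(\R^d)$, so that $\|f_n\|_{L^p}=1$ and $\|\widehat{f_n}|_{\sph{d-1}}\|_{L^2(\sigma)}\to{\bf T}_{d,k}(p)$. For $1<p<p_c$, reflexivity together with the closedness of $L^p_{G_k}(\R^d)$ under weak limits allows one, after extraction, to assume $f_n\rightharpoonup f\in L^p_{G_k}(\R^d)$ weakly, with $\|f\|_{L^p}\leq 1$ by weak lower semicontinuity. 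The endpoint $p=1$ would require an initial tightness reduction to rule out mass escaping to infinity.

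The heart of the proof is the following compactness claim: the restriction operator $R_p\colon L^p_{G_k}(\R^d)\to L^2(\sph{d-1})$, $R_p f:=\widehat{f}|_{\sph{d-1}}$, is compact for every $1\leq p<p_c$. Granting this, $f_n\rightharpoonup f$ upgrades to $R_p f_n\to R_p f$ strongly in $L^2(\sph{d-1})$, whence $\|R_p f\|_{L^2}={\bf T}_{d,k}(p)$. Combined with the Stein--Tomas inequality $\|R_p f\|_{L^2}\leq{\bf T}_{d,k}(p)\|f\|_{L^p}$ and $\|f\|_{L^p}\leq 1$, this forces the equality $\|f\|_{L^p}=1$. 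By the Radon--Riesz property of the uniformly convex space $L^p$, weak convergence plus norm convergence then promotes to strong convergence $f_n\to f$ in $L^p$, identifying $f$ as a maximizer.

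The main obstacle is therefore establishing compactness of $R_p$. The key structural observation is that for $k\in\{2,\ldots,d-2\}$, both factors of $G_k=O(d-k)\times O(k)$ act nontrivially, so no nonzero translate of a $G_k$-symmetric function remains $G_k$-symmetric; in particular, the translation-induced noncompactness of Fourier restriction is \emph{absent} in $L^p_{G_k}(\R^d)$. The only remaining source of noncompactness is dilational, but for any nonzero $\phi\in L^p_{G_k}$ and $\phi_\lambda(x):=\lambda^{d/p}\phi(\lambda x)$, the identity $\widehat{\phi_\lambda}(\xi)=\lambda^{-d/p'}\widehat{\phi}(\xi/\lambda)$ shows that $\|R_p\phi_\lambda\|_{L^2}\to 0$ as $\lambda\to 0$ or $\lambda\to\infty$, since the frequency support of $\phi_\lambda$ migrates away from the fixed unit sphere. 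I would upgrade this heuristic into a quantitative profile decomposition \`a la Bahouri--G\'erard adapted to the $G_k$-symmetric class: any bounded sequence in $L^p_{G_k}$ splits, after extraction, into dilated profiles plus an error whose restriction norm vanishes, and the strict inequality $p<p_c$ ensures that each nontrivial dilation profile contributes vanishingly to $\|R_p f_n\|_{L^2}$ in the limit, so only the weak limit $f$ survives. An alternative route would bypass the profile decomposition by combining Theorem~\ref{thm:main} with a Br\'ezis--Lieb-type splitting for the restriction functional, using $2/p>1$ to exclude vanishing of $f$ directly.
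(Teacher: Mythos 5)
Your high-level strategy -- exploit the strict subcriticality $p<p_\star:=\tfrac{2(d+m)}{d+m+2}$ afforded by Theorem~\ref{thm:main}, and observe that $G_k$-symmetry with $2\leq k\leq d-2$ kills translations so that some genuine compactness should survive -- is the right one, and it is in fact the paper's underlying philosophy. But there is a real gap: the crucial claim that $R_p\colon L^p_{G_k}(\R^d)\to L^2(\sph{d-1})$ is compact for $p<p_\star$ is never actually proved. You argue that translation-induced loss of compactness is absent and then assert that ``the only remaining source of noncompactness is dilational,'' which you then plan to handle by ``upgrading a heuristic into a quantitative profile decomposition \`a la Bahouri--G\'erard.'' That is not a proof; constructing such a decomposition adapted to $L^p_{G_k}$ is itself a substantial task, and more importantly you have not justified why those two sources of noncompactness exhaust the possibilities. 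The dangerous scenario in this problem is concentration of $\widehat{f_n}|_{\sph{d-1}}$ on $G_k$-invariant lower-dimensional spheres $\sph{\min\{d-k,k\}-1}\subset\sph{d-1}$ (equivalently, the extension functions $\widehat{F_n\sigma}$ escaping to spatial infinity), and nothing in your argument rules that out.

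What you are missing is exactly the technical tool the paper builds for this purpose: the uniform pointwise decay bound for extensions of $G_k$-symmetric densities, Proposition~\ref{prop:decay} and its Corollary~\ref{cor:decay}, which says $|\widehat{F\sigma}(y,z)|\lesssim \|F\|_{L^2}(1+|y|)^{\frac{k+1-d}{2}}(1+|z|)^{\frac{1-k}{2}}$ and hence that $\widehat{F\sigma}$ decays to $0$ at infinity uniformly over the $L^2$-unit sphere of $L^2_{G_k}(\sph{d-1})$. This is what precludes vanishing. The paper's proof then runs on the dual side: an $L^\infty$--$L^{p_\star'}$ interpolation using Theorem~\ref{thm:main} shows $\sup_x|\widehat{F_n\sigma}(x)|\geq\varepsilon_0>0$ along a maximizing sequence, Corollary~\ref{cor:decay} confines the near-maximal point to a fixed ball, Arzel\`a--Ascoli produces a nonzero uniform limit, and the Fanelli--Vega--Visciglia reformulation of Br\'ezis--Lieb converts weak convergence with a nonzero limit into strong $L^2$ convergence. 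Incidentally, working on the dual side also sidesteps your acknowledged difficulty at $p=1$, since the maximizing sequence then lives in the reflexive space $L^2_{G_k}(\sph{d-1})$. If you want to salvage your compactness route, you would still need Corollary~\ref{cor:decay} (or something equivalent): with it, the argument ``$F_n\rightharpoonup 0$ in $L^2_{G_k}$ implies $\widehat{F_n\sigma}\to 0$ pointwise, hence in $L^\infty$ by decay plus Arzel\`a--Ascoli, hence in $L^{p'}$ by interpolation against the uniform $L^{p_\star'}$ bound'' does indeed prove compactness of $\mathcal R^*\colon L^2_{G_k}(\sph{d-1})\to L^{p'}(\R^d)$ for $p<p_\star$ -- but that input is precisely what your proposal omits.
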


\noindent In particular, the set of all normalized maximizers for ${\bf T}_{d,k}(p)$ is itself compact {as long as $1\leq p<\frac{2(d+m)}{d+m+2}$}.  
Note that Theorem~\ref{thm:G_kExistence} applies to $p=p_d$, and thus {establishes the unconditional existence of} maximizers for
the classical Stein--Tomas inequality within the class of $G_k$-symmetric functions.
We remark that, in contrast to the conclusion of \cite[Theorem 1.1]{FLS16}, precompactness of complex-valued
maximizing sequences is {\it not} expected to hold modulo symmetries only, since $G_k$-symmetry eliminates
the loss of compactness due to translations.
There is still the danger that a maximizing sequence might {{conceivably}} converge weakly to zero.
To show that this is not the case, the proof of Theorem \ref{thm:G_kExistence} will make use of a decay
property of the Fourier transform which is only available in the $G_k$-symmetric setting for $2\leq k\leq
d-2$; see Proposition \ref{prop:decay} and Corollary \ref{cor:decay} below.
{The endpoint case of Theorem \ref{thm:G_kExistence} remains an interesting open question.}

\medskip 

The range of exponents of Theorem~\ref{thm:main} turns out to be optimal.
This is a consequence of our third result, which exhibits the necessary conditions for the
 restriction estimate~\eqref{eq:GkRestriction} to hold within the class of $G_k$-symmetric functions.

\begin{theorem}\label{thm:Nec}
Let $d\geq 4$,  $k\in\{2,3,\ldots,d-2\}$, $m:=\min\{d-k,k\}$, and $1\leq p,q\leq \infty$. 
If inequality~\eqref{eq:GkRestriction} holds within the class of $G_k$-symmetric functions, then one of
the following conditions is satisfied:
\begin{itemize}
\item[(i)] $\frac{d+1}{2d}<\frac1p<\frac{m+1}{2m}$ and $\frac{d+m}{p}+\frac{d-m}{q}\geq d+1;$
\item[(ii)] $\frac1p=\frac{m+1}{2m}$ and $\frac1p+\frac1q> 1;$ 
\item[(iii)] $\frac1p>\frac{m+1}{2m}$ and $\frac1p+\frac1q\geq 1$.
\end{itemize}
\end{theorem}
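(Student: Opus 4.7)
\textbf{Proof sketch of Theorem~\ref{thm:Nec}.} The plan is to construct $G_k$-symmetric test functions violating~\eqref{eq:GkRestriction} whenever $(p,q)$ lies outside the claimed range. Since swapping the roles of the two factors in $\R^d = \R^{d-k} \times \R^k$ does not affect the class $L^p_{G_k}$, we may assume without loss of generality that $k = m$. The strict constraint $\tfrac{1}{p} > \tfrac{d+1}{2d}$ is a well-known consequence of the failure of Fourier restriction for radial $L^p$-functions at $p = \tfrac{2d}{d+1}$; since every radial function is in particular $G_k$-symmetric, the same radial counterexample applies here. It then remains to establish the three dichotomic conditions (i)--(iii), for which we employ a single $G_k$-symmetric Knapp-type test function together with its adjoint extension.

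Fix a small $\delta \in (0,1)$. Define the $G_k$-symmetric spherical annulus
\[
B_\delta := \{\omega = (y,z) \in \sph{d-1} \subset \R^{d-k} \times \R^k : |y| \leq \delta\},
\]
of surface measure $\sigma(B_\delta) \sim \delta^{d-m}$, and consider the indicator test $g_\delta := \chi_{B_\delta}$, which is $G_k$-symmetric with $\|g_\delta\|_{L^{q'}(\sph{d-1})} \sim \delta^{(d-m)/q'}$. Parametrizing the sphere via $\omega = (t\xi, \sqrt{1-t^2}\,\eta)$ with $t \in [0,\delta]$, $\xi \in \sph{d-m-1}$, $\eta \in \sph{m-1}$, the extension operator reduces to
\[
Eg_\delta(u,v) = c \int_0^\delta t^{d-m-1}(1-t^2)^{(m-2)/2}\, F_{d-m}(t|u|)\, F_m(\sqrt{1-t^2}\,|v|)\,\d t,
\]
where $F_n$ denotes the Fourier transform of the surface measure on $\sph{n-1}$: a bounded radial function satisfying $|F_n(r)| \lesssim r^{-(n-1)/2}$ and oscillating like a shifted cosine for $r \gg 1$. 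A Taylor expansion in $t$ yields, for $|u| \leq c_0/\delta$ and $|v| \leq c_0/\delta^2$ with an appropriate small constant $c_0 > 0$, the pointwise lower bound $|Eg_\delta(u,v)| \gtrsim \delta^{d-m}\,|F_m(|v|)|$.

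Raising to the $p'$-th power and integrating reduces matters to the asymptotics of the single-variable integral $\int_{|v| \leq 1/\delta^2} |F_m(|v|)|^{p'}\,\d v$ as $\delta \to 0^+$. A standard Bessel-asymptotic computation shows that this integral is comparable to $1$ when $\tfrac{1}{p} > \tfrac{m+1}{2m}$, to $\log(1/\delta)$ when $\tfrac{1}{p} = \tfrac{m+1}{2m}$, and to $\delta^{-2m+(m-1)p'}$ when $\tfrac{1}{p} < \tfrac{m+1}{2m}$. Combining with the volume $|\{|u| \leq c_0/\delta\}| \sim \delta^{-(d-m)}$ produces three distinct lower bounds on $\|Eg_\delta\|_{L^{p'}(\R^d)}$; matching each against the upper bound $C\|g_\delta\|_{L^{q'}}$ forced by the assumed restriction inequality, and letting $\delta \to 0^+$, recovers conditions (iii), (ii), and (i) respectively. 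The \emph{main obstacle} is securing a sharp lower bound on $\int |F_m|^{p'}$ in the divergent regime: one must rule out cancellation at the zeros of $F_m$, which is handled by decomposing $[1, 1/\delta^2]$ into unit-length intervals and noting that $|\cos|^{p'}$ integrates to a positive absolute constant over each such interval. The Taylor expansion of $F_{d-m}$ near $0$ and the control of the $\sqrt{1-t^2}$ factor inside $F_m$ are routine but essential for the validity of the pointwise lower bound on $|Eg_\delta|$.
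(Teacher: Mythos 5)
Your strategy is the same as the paper's: take the $G_k$-symmetric Knapp cap $B_\delta$, express its Fourier extension as a one-dimensional $t$-integral in the slice parameter, use Bessel asymptotics to establish a lower bound for $|Eg_\delta|$ on a set of controlled size, and then split into three regimes according to whether $\frac1p$ is smaller than, equal to, or greater than $\frac{m+1}{2m}$. The computations also line up: $\sigma(B_\delta)\sim\delta^{d-m}$, the $|u|$-volume $\sim\delta^{-(d-m)}$, and the growth of $\int_0^{c_0\delta^{-2}} r^{m-1}|F_m(r)|^{p'}\,{\rm d}r$ matches the paper's sum $\sum_{j\lesssim\delta^{-2}} j^{-(m-1)(p'-2)/2}$.

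However, there is a genuine gap in the claimed pointwise lower bound $|Eg_\delta(u,v)|\gtrsim\delta^{d-m}|F_m(|v|)|$ for \emph{all} $|v|\leq c_0\delta^{-2}$. The extension is
\[
Eg_\delta(u,v) \simeq \int_0^\delta t^{d-m-1}(1-t^2)^{\frac{m-2}{2}}\,F_{d-m}(t|u|)\,F_m(\sqrt{1-t^2}\,|v|)\,{\rm d}t,
\]
and as $t$ ranges over $[0,\delta]$ the argument $\sqrt{1-t^2}\,|v|$ sweeps an interval of length $\sim\delta^2|v|$, which is $\lesssim c_0$ but not negligible. When $|v|$ is near a zero of $F_m$, this argument crosses the zero, $F_m(\sqrt{1-t^2}\,|v|)$ changes sign inside the $t$-integral, and the resulting cancellation can make $|Eg_\delta(u,v)|$ strictly smaller than any fixed multiple of $\delta^{d-m}|F_m(|v|)|$. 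Concretely, writing $F_m(r)\sim C r^{-\frac{m-1}{2}}\cos(r-c)$, a one-term Taylor expansion gives $F_m(\sqrt{1-t^2}\,|v|)\approx F_m(|v|)-\tfrac12 t^2|v|\,F_m'(|v|)$, and near a zero $|F_m'(|v|)|\sim|v|^{-\frac{m-1}{2}}$ dwarfs $|F_m(|v|)|=O(|v|^{-\frac{m+1}{2}})$; the integral then depends delicately on the sign pattern of the correction, and for suitable $|v|$ the leading contribution vanishes identically. So "Taylor expansion in $t$" does not deliver the stated pointwise inequality.

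The fix is precisely what the paper does: restrict $|v|$ to annuli $\frac{z_j-c}{\sqrt{1-\delta^2}}\leq|v|\leq z_j+c$ around the positive local maxima $z_j$ of the Bessel function $J_{\frac{k-2}{2}}$, with $c$ small enough that $\sqrt{1-t^2}\,|v|$ stays in $[z_j-c,z_j+c]$ for all $t\in[0,\delta]$. On these annuli the integrand is of fixed sign and bounded below, so there is no cancellation, one obtains a clean lower bound $\gtrsim\delta^{d-m}j^{\frac{1-m}{2}}$, and the union of annuli already carries a constant fraction of $\int_0^{c_0\delta^{-2}}r^{m-1}|F_m(r)|^{p'}\,{\rm d}r$. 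With this correction your computation of the three regimes goes through verbatim and yields the same conditions (i)--(iii).
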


 We shall see in Corollary~\ref{cor:GkRiesz} that the necessary conditions from (i)  are sufficient when
 $\frac{d+m+2}{2(d+m)}\leq\frac1p<\frac{m+1}{2m}$.
For larger values of $p$, the
sufficiency of these conditions remains an open problem. 
On the other hand, conditions (ii) and (iii) turn out
to be sufficient for estimate~\eqref{eq:GkRestriction} to hold in the $G_k$-symmetric setting.
The crux of the matter is a {{(mixed)}} Lorentz space estimate at the endpoint
$p={q'}=\frac{2m}{m+1}$, which is the content of our fourth result. {{If $m:=\min\{d-k,k\}<\frac d2$ we set $X_p:= L^{p,1}(\R^d)$, otherwise $m=\frac d2$ and let
\begin{equation}\label{eq_Xp}
 X_p :=L_x^{p,1}(\R^{d-k}; L^{p,1}_y(\R^k))+L_y^{p,1}(\R^{k}; L^{p,1}_x(\R^{d-k})).
 \end{equation}}}

\begin{theorem}\label{thm:easy}
Let $d\geq 4, k\in\{2,3,\ldots,d-2\}$, and $m:=\min\{d-k,k\}$.
Then the estimate
\[\|\widehat{f}\|_{L^{\frac{2m}{m-1},\infty}(\sph{d-1})}\leq C(k,d)\|f\|_{{{X_{\frac{2m}{m+1}}}}}\]
holds for every $G_k$-symmetric function $f:\R^d\to\Co$.
\end{theorem}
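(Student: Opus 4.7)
I will decompose $\R^d = \R^{d-k}\times\R^k$ and write $x = (x', x'')$; a $G_k$-symmetric function then takes the form $f(x) = F(|x'|, |x''|)$ and $\widehat f$ is bi-radial. Using bipolar coordinates $\omega = (\cos\alpha\,\theta, \sin\alpha\,\phi) \in \sph{d-1}$, with $\alpha \in [0,\pi/2]$, $\theta \in \sph{d-k-1}$, $\phi\in\sph{k-1}$, the surface measure factorises as $d\sigma = c_{d,k}(\cos\alpha)^{d-k-1}(\sin\alpha)^{k-1}d\alpha\,d\sigma_{d-k-1}\,d\sigma_{k-1}$, and $\widehat f|_{\sph{d-1}}$ depends only on $\alpha$. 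Integrating out $\theta$ and $\phi$ against the plane wave will produce the double Bessel representation
\[
\widehat{f}(\omega) = c_{d,k}\int_0^\infty\!\!\int_0^\infty F(r,s)\,\mathcal{B}_{d-k}(r\cos\alpha)\,\mathcal{B}_k(s\sin\alpha)\, r^{d-k-1}s^{k-1}\, dr\, ds,
\]
where $\mathcal{B}_n(t):=\int_{\sph{n-1}}e^{-2\pi i t\theta\cdot e_1}\,d\sigma(\theta)$ is the radial Fourier kernel, satisfying the classical estimate $|\mathcal{B}_n(t)| \les (1+|t|)^{-(n-1)/2}$.

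\textbf{Core estimate.} Substituting $u = r\cos\alpha$, $v = s\sin\alpha$ and applying Hölder's inequality in Lorentz spaces with the pair $(p,1)\leftrightarrow(p',\infty)$, where $p = \tfrac{2m}{m+1}$ and $p' = \tfrac{2m}{m-1}$, together with the scaling identity $\|F_\alpha\|_{L^{p,1}(u^{d-k-1}v^{k-1}du\,dv)} = (\cos\alpha)^{(d-k)/p}(\sin\alpha)^{k/p}\|F\|_{L^{p,1}(r^{d-k-1}s^{k-1}dr\,ds)}$ for $F_\alpha(u,v) := F(u/\cos\alpha, v/\sin\alpha)$, I expect to reach the pointwise bound
\[
|\widehat{f}(\omega)| \les h(\alpha)\,\|K\|_{L^{p',\infty}(u^{d-k-1}v^{k-1}du\,dv)}\,\|f\|_{L^{p,1}(\R^d)},
\]
where $K(u,v) := (1+u)^{-(d-k-1)/2}(1+v)^{-(k-1)/2}$ and $h(\alpha) := (\cos\alpha)^{-(d-k)(m-1)/(2m)}(\sin\alpha)^{-k(m-1)/(2m)}$.

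\textbf{Distribution function computations and main obstacle.} The proof will be completed by two direct layer-cake arguments: first, that $\|K\|_{L^{p',\infty}}$ is finite---this will hold provided the two Bessel exponents $(d-k-1)/2$ and $(k-1)/2$ are distinct, i.e., provided $d\neq 2k$, in which case the joint distribution function of $K$ is dominated by $\lambda^{-p'}$; and second, that $h \in L^{q,\infty}([0,\pi/2], d\mu)$ with $q = \tfrac{2m}{m-1}$ and $d\mu = (\cos\alpha)^{d-k-1}(\sin\alpha)^{k-1}d\alpha$, which will follow by analysing the distribution function separately near $\alpha = 0$ and near $\alpha = \pi/2$ (both endpoints giving exponent exactly $2m/(m-1)$). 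The delicate case will be $d = 2k$ (hence $d-k = k = m$), where the distribution function of $K$ picks up a logarithmic factor $\log(1/\lambda)$ over $\lambda^{-p'}$, so the Hölder step above formally diverges at the endpoint. To resolve it I would either exploit the full oscillatory Bessel asymptotic $\mathcal{B}_n(t) = c\,t^{-(n-1)/2}\cos(2\pi t - \theta_n) + O(t^{-(n+1)/2})$---using stationary phase cancellation on the main term and the strictly better Lorentz bounds enjoyed by the $O(t^{-(n+1)/2})$ remainder---or pass via real interpolation between $L^{p,r}\to L^{q,r}$ bounds with $r > 1$ obtained from a slightly weakened pointwise kernel estimate, and then descend to $L^{p,1}\to L^{q,\infty}$ at the endpoint. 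This is the principal technical difficulty I anticipate.
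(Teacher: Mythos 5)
Your approach is essentially identical to the paper's: write $\widehat f$ via the bi-radial Bessel representation (Lemma~\ref{lem:GkSymFT}), replace the Bessel kernels by the absolute decay $|J_\nu(r)|\lesssim r^\nu(1+r)^{-\nu-\frac12}$, apply H\"older's inequality in Lorentz spaces with the pair $(p,1)\leftrightarrow(p',\infty)$ at $p=\frac{2m}{m+1}$, scale out the radial parameters, and finally integrate the resulting power-law pointwise bound over $\sph{d-1}$. Your extra angular factor $h(\alpha)$ is the same thing as the paper's $|\eta|^{(k-d)/p'}|\zeta|^{-k/p'}$ on the sphere, just expressed in the $\alpha$-coordinate.

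Your concern about $d=2k$ is well-founded, and in fact it applies to the paper's own proof. The paper compresses the crucial step into a single clause claiming that the weak quasi-norm of $(1+|y|)^{(k-d+1)/2}(1+|z|)^{(1-k)/2}$ in $L^{p',\infty}(\R^d)$ is controlled ``using the facts that $2k\leq d$ and $p'=\tfrac{2k}{k-1}$.'' When $d>2k$ the $y$-factor lies in strong $L^{p'}(\R^{d-k})$ and the $z$-factor in $L^{p',\infty}(\R^k)$, and a product of a strong-type function with a weak-type function over a Cartesian product is weak-type, so the step closes. When $d=2k$, however, both factors are in $L^{p',\infty}(\R^k)$ only, and one computes $|\{K>\lambda\}|\simeq\lambda^{-p'}\log(1/\lambda)$ as $\lambda\to 0^+$ --- precisely the logarithmic loss you identify, so the pointwise bound as stated is vacuous (the right-hand side is $+\infty$). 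Your two proposed remedies (exploiting cancellation via the oscillatory term $A_\nu e^{ir}r^{-1/2}$ in Lemma~\ref{lem:HardBessel}, or interpolating Lorentz-scale estimates and descending to the $L^{p,1}\to L^{p',\infty}$ endpoint) are sensible, but neither is carried out, so your proposal --- like the paper's printed argument --- leaves the balanced case $d=2k$ (equivalently $m=k=d-k$) without a complete justification. A simpler fix worth noting: one does not actually need a pointwise bound on $\widehat f$; estimating $\sigma\{|\widehat f|>\lambda\}$ on the sphere directly, where the constraint $|\eta|^2+|\zeta|^2=1$ confines the bad set $\{|\eta||\zeta|\text{ small}\}$ to a thin neighbourhood of two equatorial subspheres, avoids the unbounded two-dimensional region responsible for the logarithm.
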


Compactness of $\sph{d-1}$ and real interpolation {{of Lorentz \cite[\S~5.3]{BL} and mixed Lorentz spaces \cite[Cor.\@ 1]{Ma23}}} between Theorem~\ref{thm:main}, Theorem~\ref{thm:easy}, and
the trivial endpoint $(p,q)=(1,\infty)$ together imply a range of estimates which we now record;
see Figure~\ref{fig:Riesz} for the corresponding Riesz diagram.

\begin{corollary}\label{cor:GkRiesz}
  Let $d\geq 4, k\in\{2,3,\ldots,d-2\}$, and $m:=\min\{d-k,k\}$. Then inequality~\eqref{eq:GkRestriction}
holds in the class of $G_k$-symmetric functions if one of the following conditions is satisfied$:$
  \begin{itemize}
	\item[(i)] $\frac{d+m+2}{2(d+m)}\leq \frac1p<\frac{m+1}{2m} $ and $\frac{d+m}{p}+\frac{d-m}{q}\geq d+1;$
	\item[(ii)] $\frac1p=\frac{m+1}{2m}$ and $\frac1p+\frac1q> 1;$ 
	\item[(iii)] $\frac1p>\frac{m+1}{2m}$ and $\frac1p+\frac1q\geq 1$.
  \end{itemize}
\end{corollary}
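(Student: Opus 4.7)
The plan is to organise the three available estimates as endpoints in the $(1/p,1/q)$-Riesz diagram and then fill in the regions (i), (ii), (iii) by combining real interpolation \cite[\S~5.3]{BL} with the finiteness of $\sigma(\sph{d-1})$. The three reference points are $P_0 = (1,0)$, carrying the trivial estimate $\|\widehat{f}\|_{L^\infty(\sph{d-1})} \leq \|f\|_{L^1(\R^d)}$; $P_1 = \bigl(\tfrac{d+m+2}{2(d+m)}, \tfrac12\bigr)$, the strong Stein--Tomas endpoint from Theorem~\ref{thm:main}; and $P_2 = \bigl(\tfrac{m+1}{2m}, \tfrac{m-1}{2m}\bigr)$, the restricted weak-type Lorentz endpoint $L^{\frac{2m}{m+1},1} \to L^{\frac{2m}{m-1},\infty}$ from Theorem~\ref{thm:easy}. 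Direct arithmetic confirms that $P_1, P_2$ both sit on the scaling line $\frac{d+m}{p} + \frac{d-m}{q} = d+1$, while $P_0, P_2$ both sit on $\frac{1}{p}+\frac{1}{q}=1$, the two boundary lines appearing in (i) and (iii).

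The main step is to real-interpolate the pairs $(P_1, P_2)$ and $(P_0, P_2)$. For every $\theta \in (0,1)$, the Bergh--L\"ofstr\"om theorem delivers $L^{p_\theta, r} \to L^{q_\theta, r}$ for all $r \in [1, \infty]$; taking $r = p_\theta$ and invoking the Lorentz embedding $L^{q_\theta, p_\theta} \hookrightarrow L^{q_\theta}$---valid because $p_\theta \leq q_\theta$ on both segments, as a one-line check using $m \geq 2$ shows---promotes this to strong $L^{p_\theta} \to L^{q_\theta}(\sigma)$ estimates on the half-open segments $[P_1, P_2)$ and $[P_0, P_2)$, strongness at $P_1$ and $P_0$ being supplied by the endpoints themselves. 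Compactness of $\sph{d-1}$ then enters through the inclusion $L^q(\sph{d-1}) \hookrightarrow L^{\tilde q}(\sph{d-1})$ for $\tilde q \leq q$, with dimensional constant $\sigma(\sph{d-1})^{1/\tilde q - 1/q}$: this lifts every strong estimate at $(1/p, 1/q)$ to every $(1/p, 1/\tilde q)$ with $1/\tilde q \geq 1/q$, without changing $p$. Applied to $[P_1, P_2)$ this sweeps out all of region (i); applied to $[P_0, P_2)$ it sweeps out all of region (iii) with the sole possible exception of the vertical ray $\{(1/p_C, 1/q) : 1/q > 1/q_C\}$ directly above $P_2$.

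This leftover ray is precisely condition (ii) and is the main obstacle, because no direct interpolation along $\{1/p = 1/p_C\}$ works while $P_2$ only supplies a weak Lorentz estimate. I sidestep this by interpolating two \emph{strong} endpoints flanking $P_2$. For small $\eps_1, \eps_2 > 0$, let $Y_1$ be the point on the open segment $(P_1, P_2)$ with first coordinate $1/p_C - \eps_1$, and let $Y_2$ be the point on the open segment $(P_0, P_2)$ with first coordinate $1/p_C + \eps_2$; both carry strong estimates by the previous paragraph. Riesz--Thorin interpolation between $Y_1$ and $Y_2$ yields a strong bound at every point of the segment $[Y_1, Y_2]$, which meets the vertical line $\{1/p = 1/p_C\}$ at a unique point $(1/p_C, 1/q_\star)$. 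A short computation using the slopes of $P_1 P_2$ and $P_0 P_2$ at $P_2$---equal to $-\tfrac{d+m}{d-m}$ and $-1$ respectively---reveals that $1/q_\star > 1/q_C$ for every admissible $(\eps_1, \eps_2)$, and that $1/q_\star \to 1/q_C^+$ as $\eps_1, \eps_2 \to 0$. Hence every target value $1/q > 1/q_C$ is attained by some choice of $(\eps_1, \eps_2)$, after which the compactness step of the previous paragraph fills in the remaining range, completing condition (ii) and the corollary.
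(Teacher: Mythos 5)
Your proposal is correct and follows exactly the strategy the paper gestures at (the paper's ``proof'' is the one-sentence remark preceding the corollary, naming real interpolation between Theorems~\ref{thm:main}, \ref{thm:easy}, the trivial endpoint, and compactness of $\sph{d-1}$). Your write-up supplies the details that the paper omits; in particular you correctly identify that the vertical ray defining region (ii) cannot be hit by a single real-interpolation pass, since every proper interpolate of the three base points has $1/p$ strictly different from $\frac{m+1}{2m}$, and you resolve this cleanly by flanking $P_2$ with two strong estimates $Y_1\in(P_1,P_2)$ and $Y_2\in(P_0,P_2)$ and running Riesz--Thorin between them, after which the computation $1/q_\star=1/q_C+\frac{\eps_1\eps_2}{\eps_1+\eps_2}\cdot\frac{2m}{d-m}\to 1/q_C^{+}$ and the finite-measure inclusion $L^q(\sph{d-1})\hookrightarrow L^{\tilde q}(\sph{d-1})$ finish the job. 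Everything checks out: the membership of $P_1,P_2$ on the scaling line and of $P_0,P_2$ on $\frac1p+\frac1q=1$, the condition $p_\theta\leq q_\theta$ along both segments (needed for the Lorentz embedding $L^{q_\theta,p_\theta}\hookrightarrow L^{q_\theta}$), and the distinctness of exponents needed to invoke \cite[\S~5.3]{BL}.
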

As a concluding remark, note that Corollary \ref{cor:GkRiesz} (i) implies the diagonal estimate
\begin{equation}\label{eq:PreGuth}
\|\widehat{f}\|_{L^p(\sph{d-1})}\lesssim \|f\|_{L^p(\R^d)}
\end{equation}
 at $p=\frac{2(d+m)}{d+m+2}$, for every $G_k$-symmetric $f:\R^d\to\Co$.
In this case, if $m=k=\lfloor\frac d2\rfloor$, then $p'
=2+\frac83d^{-1}+O(d^{-2})$.
That \eqref{eq:PreGuth} holds for {\it general} $f\in L^p(\R^d)$ in the range $p'>2+\frac83d^{-1}+O(d^{-2})$ was recently proved by Guth \cite{Gu18}, and later improved in \cite{HR19}.
This remains to date the state of the art in the high-dimensional restriction conjecture.

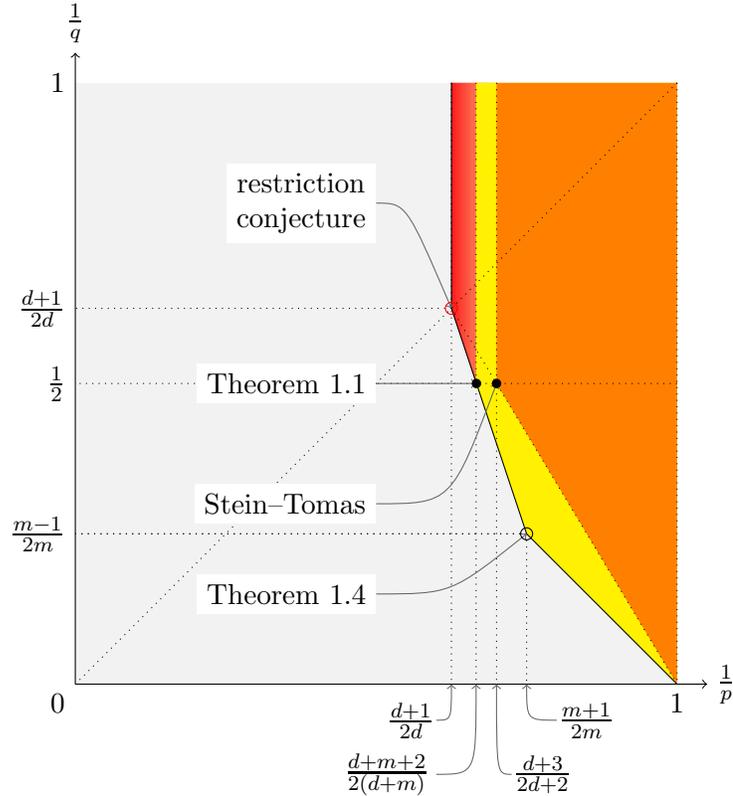
\begin{figure}[htbp]
  \centering
  \begin{tikzpicture} [scale = 8]
    \fill[black!5] (0, 0) rectangle (1, 1);
    \shade[left color = red, right color = orange!50] (0.625, 1) -- (0.625, 0.625) -- (0.666, 0.5) -- (0.7, 1) -- cycle;
    \fill[yellow] (0.75, 0.25) -- (0.666, 0.5) -- (0.666,1) -- (1, 1) -- (1, 0) -- cycle;
    \fill[orange] (0.7, 0.5) -- (0.7, 1)  -- (1, 1) -- (1, 0) -- cycle;
    \draw[->] (0, 0) -- (1.05, 0) node[right] {$\frac1p$};
    \draw[->] (0, 0) -- (0, 1.05) node[above] {$\frac1q$};
    \draw (0, 1) node[left] {$1$};
    \draw[dotted] (1, 1) -- (0, 0) node[below left] {$0$}; 
    \draw[dotted] (1, 1) -- (1, 0) node[below] {$1$};
    \draw[dotted] (0.625, 1) -- (0.625, 0);
    \draw[dotted] (0.666, 1) -- (0.666, 0);
    \draw[dotted] (0.75, 0.25) -- (0.75, 0);
    \draw[dotted] (0.75, 0.25) -- (0, 0.25);
    \draw[dotted] (0.625, .625) -- (1, 0);
    \draw (0.75, 0.25) -- (1, 0);
    \draw (0.75, 0.25) -- (0.625, 0.625);
    \draw (0.625, 0.625) -- (0.625,1);
    \draw[dotted] (0.75, 0.25) -- (0, 0.25) node[left] {$\frac{m-1}{2m}$};
    \draw[<-, black!60, text = black] (0.625, 0) .. controls (0.625, -0.06) .. (0.6, -0.06) node[left = -3] {$\frac{d+1}{2d}$};
    \draw[dotted] (0.7, 1) -- (0.7, 0);
    \draw[<-, black!60, text = black] (0.7, 0) .. controls (0.7, -0.15) .. (0.725, -0.15) node[right = -3] {$\frac{d+3}{2d+2}$};
    \draw[<-, black!60, text = black] (0.75, 0) .. controls (0.75, -0.06) .. (0.8, -0.06) node[right = -3] {$\frac{m+1}{2m}$};
    \draw[dotted] (1, 0.5) -- (0, 0.5) node[left] {$\frac12$};
    \draw[dotted] (0.625, 0.625) -- (0, 0.625) node[left] {$\frac{d+1}{2d}$};
    \draw[<-, black!60, text = black] (0.7, 0.5) .. controls (0.625, 0.3) .. (0.5, 0.3) node[fill = white, left] {Stein--Tomas};
    \draw[<-, black!60, text = black] (0.666, 0.5) .. controls (0.625, 0.5) .. (0.5, 0.5) node[fill = white, left] {Theorem \ref{thm:main}};
    \draw[<-, black!60, text = black] (0.75, 0.25) .. controls (0.625, 0.15) .. (0.5, 0.15) node[fill = white, left] {Theorem \ref{thm:easy}};
\draw[black,fill=black] (0.7, 0.5) circle [radius = 0.007];
    \draw[black,fill=black] (0.6666, 0.5) circle [radius = 0.007];
        \draw[<-, black!60, text = black] (0.666, 0) .. controls (0.666, -0.15) .. (0.6, -0.15) node[left = -1] {$\frac{d+m+2}{2(d+m)}$};
    \draw[<-, black!60, text = black] (0.625, 0.625) .. controls (0.55, 0.8) .. (0.5, 0.8) node[fill = white, left, align=center] {restriction\\conjecture};
    \draw[black] (0.75, 0.25) circle [radius = 0.01];
    \draw[red] (0.625, 0.625) circle [radius = 0.01];
  \end{tikzpicture}
  \caption{Riesz diagram for the $G_k$-symmetric restriction problem to $\sph{d-1}$. 
  Estimates in the orange region follow from the Stein--Tomas inequality, 
  estimates in the yellow region follow from 
  Corollary \ref{cor:GkRiesz} and, in light of Theorem \ref{thm:Nec},
  no estimates are possible within the grey region.
  The possibility of estimates in the red region remains an open problem.
  }
  \label{fig:Riesz}
\end{figure}

  \subsection{Historical remarks}
The expectation that further Fourier restriction estimates are available within certain classes of functions exhibiting additional symmetries has been extensively explored in the literature.
For instance, the well-known fact that the restriction conjecture holds for radial functions has been generalized to the class of products of radial functions and spherical harmonics \cite{DG04}.
On the  paraboloid and the cone, the restriction conjecture has been established  for functions which are invariant under spatial rotations, and further estimates are known to hold in the cylindrically symmetric case, 
but only for dyadically supported functions; see \cite{Sh09,Sh09b}.
This was later generalized to the mixed norm setting; see \cite{MZZ12,MZZ19}.
Very recently, the $G_k$-symmetric setting has been proposed in \cite{WY20}, albeit in the context of
adjoint restriction estimates on the unit sphere involving weight functions supported on sets of the form
$\{(y,z)\in\R^{d-k}\times\R^k:
|y| \leq C|z|^{-\alpha}\}$ for some $C, \alpha>0$. 
With the present work, we also aim to initiate a systematic exploration of the more general $O(k_1)\times\ldots\times O(k_n)$-symmetric restriction problem to $\sph{d-1}$, with $\sum_{j=1}^n k_j=d$. This so-called {\it block-radial} symmetry has been extensively explored in the related context of Sobolev space embeddings, starting with the pioneering work of Lions \cite{Li82}.

In all of the above problems, it is very natural to ask about maximizers and optimal constants.
{\it Sharp restriction theory} is a vibrant area of research which has flourished in the last decade.
A natural first step towards sharp restriction inequalities is to establish the existence of maximizers. 
This provides a stepping stone towards a qualitative analysis, the discovery of explicit maximizers, and the corresponding full characterization 
(which up to now is only available in very special circumstances).
Works addressing the existence of maximizers for inequalities of endpoint Fourier
restriction type tend to be a {\it tour de force} in classical analysis, using a variety
of sophisticated techniques. 
Besides the aforementioned precompactness results on the unit sphere \cite{CS12, FLS16, Sh16}, we highlight the general method developed in \cite{FVV11} together with the (unconditional) existence results on the paraboloid \cite{BV07,St20}, the cone \cite{Qu13,Ra12}, the hyperboloid \cite{COSS19,COSSS21,Qu15}, and the moment curve \cite{BS20}.
For a more comprehensive discussion and further references, we refer the interested reader to the survey \cite{FOS17}.

\subsection{Structure of the paper}
In  \S~\ref{sec:Prelim}, we discuss some 
analytic preliminaries about
the interplay between $G_k$-symmetry and the Fourier transform. We also investigate a useful
family of oscillatory integrals, and establish  weighted versions of the classical inequalities of Hausdorff--Young and Hardy--Littlewood--Sobolev.
 In \S~\ref{sec:WRtypeE}, we prove the weighted restriction estimates that will play a central role in the proof
of Theorem \ref{thm:main}, which is then the subject of  \S~\ref{sec:ProofThm1}.
Theorems \ref{thm:G_kExistence}, \ref{thm:Nec}, \ref{thm:easy} are proved in \S~\ref{sec:ProofThmExistence}, \S~\ref{sec:ProofThm2}, \S~\ref{sec:ProofThm3}, respectively.

\subsection{Forthcoming notation}
We reserve the letter $d$ to denote the dimension of the ambient space $\R^d$.
Given a Lebesgue exponent $p\in[1,\infty]$, its dual is $p'=p/(p-1)$.
The usual Lebesgue and Lorentz spaces are denoted by $L^p(\R^d)$ and $L^{p,s}(\R^d)$,
respectively, and the corresponding (quasi-)norms are indexed accordingly. 
The Schwartz space is denoted by $\mathcal S(\R^d)$.
The Fourier transform on $\R^d$ is normalized in the following way:
\[
  \widehat{f}(\xi)=\int_{\R^d} f(x) e^{-i\xi\cdot x}\d x.
\]
The indicator function of a set $E\subset\R^d$ is denoted by $\mathbbm{1}_E$, and its Lebesgue measure by $|E|$. 
The usual surface measure on $\sph{d-1}$ is denoted by $\sigma$, and its surface area is given by
$\sigma(\sph{d-1})=\int_{\sph{d-1}}\d\sigma(\omega)={2\pi^{\frac d2}}\Gamma(\frac d2)^{-1}$.
We shall write 
\[\|F\|_{L^p(\sph{d-1})}^p=\int_{\sph{d-1}}|F(\omega)|^p\d\sigma(\omega).\]
Finally, we use the shorthand notation $X\lesssim Y$, $Y\gtrsim X$, $X=O(Y)$ to denote the estimate $|X|\leq CY$ for
some positive constant $C$ which is only allowed to depend on the space dimension $d$, the symmetry index $k$, and possibly some other {\it fixed} parameters.  
We also write $X\simeq Y$ for $X\lesssim Y \lesssim X$.
 
 
\section{Preliminaries}\label{sec:Prelim}
In this section, we discuss some analytic preliminaries related to
duality, Bessel functions, $G_k$-symmetry, oscillatory integrals, and weighted variants of the classical inequalities of Hausdorff--Young and
Hardy--Littlewood--Sobolev.

\subsection{Duality}
The adjoint of the restriction operator to the unit sphere, $\mathcal R:L^p(\R^d)\to L^q(\sph{d-1}), f\mapsto
\widehat{f}\mid_{\sph{d-1}}$,  is the {\it extension operator}, $\mathcal R^\ast:L^{q'}(\sph{d-1})\to
L^{p'}(\R^d)$, $F\mapsto \widehat{F\sigma}$, defined at $x\in\R^d$ via the expression
\begin{equation}\label{eq:ExtensionOp}
  \widehat{F\sigma}(x):=\int_{\sph{d-1}} F(\omega) e^{i x\cdot\omega} \d\sigma(\omega).
\end{equation}
In particular, if $F\equiv 1$, then 
\begin{equation}\label{eq:SigmaHat}
\widehat{\sigma}(x)=(2\pi)^{\frac d2} |x|^{\frac {2-d}2}J_{\frac{d-2}2}(|x|),
\end{equation}
where $J_\nu$ denotes the Bessel function of the first kind; this is a special case of the so-called
Bochner--Hecke formula (see \cite[p.~347]{St93}).
From the classical asymptotic formulae for Bessel functions, see \eqref{eq:BesselAsymp}--\eqref{eq:BesselAsympNear0} below, or via a direct stationary phase argument, one has  that
\begin{equation}\label{eq:AsymptSigmaHat}
|\widehat{\sigma}(x)|=O( (1+|x|)^{\frac{1-d}2});
\end{equation}
see \cite[p.~348]{St93}. Estimate \eqref{eq:AsymptSigmaHat} is a manifestation of the well-known fact that curvature causes the Fourier transform to decay.
\medskip

In this dual setting, a function $F:\sph{d-1}\to\Co$ is said to be {\it $G_k$-symmetric on $\sph{d-1}$} if $F\circ A=F$, for every $A\in G_k$. In particular, a set $S\subset\sph{d-1}$ will be called $G_k$-symmetric if its
indicator function $\mathbbm{1}_S$ is $G_k$-symmetric, and similarly for subsets $E\subset\R^d$.

\subsection{Bessel functions}
In view of identity \eqref{eq:SigmaHat},  the Bessel function
\[J_\nu(r):=\sum_{j=0}^\infty \frac{(-1)^j (\frac12 r)^{2j+\nu}}{j!\Gamma(\nu+j+1)}\]
is expected to play a role
in the analysis. Only $\nu,r\geq 0$ will be of interest. 
As is well-known, for any fixed $\nu\geq 0$, one has that
\begin{equation}\label{eq:BesselAsymp}
J_\nu(r)=
\left(\frac{\pi r}2\right)^{-\frac12}\cos\left(r-\frac{\nu\pi}2-\frac{\pi}4\right)+O(r^{-\frac32}),
\quad\text{as }r\to\infty;
\end{equation}
\begin{equation} \label{eq:BesselAsympNear0}
 |J_\nu(r)|\leq\frac{r^\nu}{2^\nu \Gamma(\nu+1)},
  \quad\text{for all }r\geq 0;
\end{equation}
see \cite[pp.~356--357]{St93} and \cite[pp.~48--49]{Wa66}. 
From~\eqref{eq:BesselAsymp}--\eqref{eq:BesselAsympNear0}, it is natural to expect the following result.

\begin{lemma}\label{lem:HardBessel}
Let $\nu\geq 0$.
There exist a constant $A_\nu\in\Co\setminus\{0\}$ and a function $R_\nu:(0,\infty)\to \R$, such
that
\begin{equation}\label{eq:BesselIdAR}
J_{\nu}(r)= (A_\nu e^{ir}+\overline{A_\nu} e^{-ir})r^{-\frac12}\mathbbm{1}_{[1,\infty)}(r)+R_\nu(r),
\end{equation}
 where additionally $|R_\nu(r)|\lesssim  r^\nu(1+r)^{-\nu-\frac32}$, for every $r\geq 0$. 
\end{lemma}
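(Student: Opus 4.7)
The identity to establish is essentially a rewriting of the standard asymptotic expansion \eqref{eq:BesselAsymp} in complex-exponential form, glued to the trivial near-origin bound \eqref{eq:BesselAsympNear0}. The only real content is fixing the constant $A_\nu$ correctly and checking that the resulting remainder satisfies a single, uniform bound $r^{\nu}(1+r)^{-\nu-3/2}$ valid on all of $(0,\infty)$.

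\textbf{Step 1: definition of $A_\nu$.} I would expand the cosine in \eqref{eq:BesselAsymp} via Euler's formula,
\[
\cos\!\Big(r-\frac{\nu\pi}{2}-\frac{\pi}{4}\Big)
=\frac{1}{2}\bigl(e^{-i(\nu\pi/2+\pi/4)}e^{ir}+e^{i(\nu\pi/2+\pi/4)}e^{-ir}\bigr),
\]
and then define
\[
A_\nu:=\frac{1}{\sqrt{2\pi}}\,e^{-i(\nu\pi/2+\pi/4)}\;\in\;\Co\setminus\{0\},
\]
so that $\overline{A_\nu}=\frac{1}{\sqrt{2\pi}}e^{i(\nu\pi/2+\pi/4)}$ and $(\pi r/2)^{-1/2}\cos(r-\nu\pi/2-\pi/4)=(A_\nu e^{ir}+\overline{A_\nu}e^{-ir})r^{-1/2}$. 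In particular, $A_\nu e^{ir}+\overline{A_\nu}e^{-ir}=2\Re(A_\nu e^{ir})\in\R$, which ensures that the remainder $R_\nu$, defined by \eqref{eq:BesselIdAR}, is real-valued since $J_\nu$ itself is real-valued on $(0,\infty)$.

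\textbf{Step 2: remainder estimate on $[1,\infty)$.} On this range, the indicator in \eqref{eq:BesselIdAR} is active, so by the choice of $A_\nu$ and \eqref{eq:BesselAsymp},
\[
R_\nu(r)=J_\nu(r)-(A_\nu e^{ir}+\overline{A_\nu}e^{-ir})r^{-1/2}=O(r^{-3/2}).
\]
For $r\ge 1$ one has $r^{\nu}(1+r)^{-\nu-3/2}\simeq r^{-3/2}$, so $|R_\nu(r)|\lesssim r^\nu(1+r)^{-\nu-3/2}$.

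\textbf{Step 3: remainder estimate on $(0,1)$.} Here the indicator vanishes, so $R_\nu(r)=J_\nu(r)$. The pointwise bound \eqref{eq:BesselAsympNear0} gives $|R_\nu(r)|\le r^\nu/(2^\nu\Gamma(\nu+1))$, which is comparable to $r^\nu(1+r)^{-\nu-3/2}$ on $(0,1)$ since $(1+r)^{-\nu-3/2}\in[2^{-\nu-3/2},1]$ there. Combining the two ranges yields the claimed global bound on $R_\nu$.

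\textbf{Main obstacle.} There is no substantial obstacle; the statement is a packaging lemma whose content lies entirely in keeping the exponential factors $e^{\pm ir}$ with their (nonzero) coefficients $A_\nu,\overline{A_\nu}$ explicit, so that they can be tracked in later stationary-phase-type computations. The only care needed is the unified remainder bound, which requires matching the $r^\nu$ behaviour near zero to the $r^{-3/2}$ behaviour at infinity; this is built into the factor $r^\nu(1+r)^{-\nu-3/2}$ and handled by splitting at $r=1$ as above.
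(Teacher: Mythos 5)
Your proof is correct and follows the same approach as the paper: expand the cosine in \eqref{eq:BesselAsymp} via Euler's formula to extract $A_\nu$, define $R_\nu$ by the identity, and verify the bound by splitting at $r=1$ and invoking \eqref{eq:BesselAsympNear0} near the origin. One small remark: your value $A_\nu=\frac{1}{\sqrt{2\pi}}e^{-i(\nu\pi/2+\pi/4)}$ is the correct one, whereas the paper's proof states $A_\nu=(\tfrac{2}{\pi})^{1/2}e^{-i(\nu\pi/2+\pi/4)}$, which is off by a factor of $2$ (it forgets the $\tfrac12$ from $\cos\theta=\tfrac12(e^{i\theta}+e^{-i\theta})$); with the paper's value the leading term would not cancel and $R_\nu$ would only be $O(r^{-1/2})$. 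The paper's proof is terser, citing Watson for the remainder estimate, while you carry out the bookkeeping explicitly; for full rigor on $[1,\infty)$ you should note that the $O(r^{-3/2})$ error in \eqref{eq:BesselAsymp}, stated as $r\to\infty$, extends to all $r\ge 1$ by continuity and compactness (or directly from the uniform bound in Watson that the paper cites).
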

\begin{proof}
Let $A_\nu:=(\frac2{\pi})^{\frac12} e^{-i(\frac{\nu\pi}2+\frac{\pi}4)}$, and define the function $R_\nu$ via
identity \eqref{eq:BesselIdAR} above.
Then the desired estimate for $R_\nu$ follows from \cite[p.~201]{Wa66}.
\end{proof}

\subsection{$G_k$-symmetry}
Given a $G_k$-symmetric function $f:\R^d\to\Co$, we shall define $f_0:(0,\infty)^2\to\Co$ via $f_0(|y|,|z|):=f(y,z)$, for $(y,z)\in \R^{d-k}\times\R^k$, and denote the corresponding
Fourier variables by $(\eta,\zeta)\in \R^{d-k}\times\R^k$.
\medskip

A function is radial if and only if its Fourier transform is radial.
This well-known fact admits the following straightforward generalization:
$G_k$-symmetry is preserved under the Fourier transform.
Indeed, identity \eqref{eq:SigmaHat} implies the following result.

\begin{lemma}\label{lem:GkSymFT}
Let $f\in \mathcal S(\R^d)$ be $G_k$-symmetric, and set $f_0(|y|,|z|):=f(y,z)$. Then the following identity holds at every  $(\eta,\zeta)\in \R^{d-k}\times\R^k:$
\[\widehat{f}(\eta,\zeta)=(2\pi)^{\frac d2} |\eta|^{\frac{2-d+k}{2}} |\zeta|^{\frac{2-k}2} \int_0^\infty \int_0^\infty \rho_1^{\frac{d-k}2}\rho_2^{\frac k2}  f_0(\rho_1,\rho_2) J_{\frac{d-k-2}{2}}(\rho_1|\eta|)J_{\frac{k-2}2}(\rho_2|\zeta|)\d\rho_1\d\rho_2.\]
\end{lemma}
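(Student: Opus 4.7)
The plan is to reduce the Fourier transform to a pair of one-dimensional Hankel-type transforms by exploiting the product structure of $G_k$-symmetry. First, I would split coordinates as $(x,\xi)=((y,z),(\eta,\zeta))\in(\R^{d-k}\times\R^k)^2$ and introduce polar coordinates $y=\rho_1\omega_1$, $z=\rho_2\omega_2$ with $\omega_1\in\sph{d-k-1}$ and $\omega_2\in\sph{k-1}$. Since $f$ is $G_k$-symmetric, $f(y,z)=f_0(\rho_1,\rho_2)$, so Fubini yields
\[
\widehat{f}(\eta,\zeta)=\int_0^\infty\!\int_0^\infty \rho_1^{d-k-1}\rho_2^{k-1} f_0(\rho_1,\rho_2)\,I_1(\rho_1\eta)\,I_2(\rho_2\zeta)\,\d\rho_1\d\rho_2,
\]
where $I_1(u):=\int_{\sph{d-k-1}}e^{-i u\cdot\omega_1}\d\sigma(\omega_1)$ and $I_2(v):=\int_{\sph{k-1}}e^{-iv\cdot\omega_2}\d\sigma(\omega_2)$.

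Next, I would evaluate $I_1$ and $I_2$ using identity \eqref{eq:SigmaHat} (the Bochner--Hecke formula) applied in the appropriate dimensions: since $\sigma$ on any sphere is invariant under $\omega\mapsto-\omega$, the sign of the exponent is irrelevant, and
\[
I_1(\rho_1\eta)=(2\pi)^{\frac{d-k}{2}}(\rho_1|\eta|)^{\frac{2-(d-k)}{2}}J_{\frac{d-k-2}{2}}(\rho_1|\eta|),\qquad
I_2(\rho_2\zeta)=(2\pi)^{\frac{k}{2}}(\rho_2|\zeta|)^{\frac{2-k}{2}}J_{\frac{k-2}{2}}(\rho_2|\zeta|).
\]

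Finally, I would substitute these expressions back into the integral and combine the powers of $\rho_1,\rho_2,|\eta|,|\zeta|,2\pi$. The $\rho_1$ powers combine as $\rho_1^{d-k-1+(2-(d-k))/2}=\rho_1^{(d-k)/2}$ and similarly $\rho_2^{k-1+(2-k)/2}=\rho_2^{k/2}$, while the $|\eta|$ and $|\zeta|$ factors come out as $|\eta|^{(2-d+k)/2}$ and $|\zeta|^{(2-k)/2}$, and the constants collapse via $(2\pi)^{(d-k)/2}(2\pi)^{k/2}=(2\pi)^{d/2}$. This yields precisely the claimed formula. I expect no serious obstacle; the only point requiring mild care is justifying Fubini and the absolute convergence of the iterated integrals, which is immediate from $f\in\mathcal S(\R^d)$ together with the uniform bound $|J_\nu(r)|\lesssim r^\nu(1+r)^{-\nu-1/2}$ implicit in \eqref{eq:BesselAsymp}--\eqref{eq:BesselAsympNear0}.
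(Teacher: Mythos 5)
Your proof is correct and follows essentially the same route as the paper's: pass to polar coordinates in each factor $\R^{d-k}\times\R^k$, exploit $G_k$-invariance to pull out $f_0$, reduce the spherical integrals to $\widehat{\sigma}$ via the antipodal change of variables, and apply the Bochner--Hecke identity \eqref{eq:SigmaHat} in dimensions $d-k$ and $k$. The bookkeeping of exponents and the remark about Fubini are both accurate; there is nothing to add.
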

\begin{proof}
 This follows from an explicit computation in polar coordinates 
  and the $G_k$-invariance of $f$.
Indeed, introducing coordinates $(\rho_1,\omega_1)$ in $\R^{d-k}$ and $(\rho_2,\omega_2)$ in $\R^k$, we find
   that
   \begin{align*}
    \widehat f(\eta,\zeta)=&\int_0^\infty \int_0^\infty  \rho_1^{d-k-1}\rho_2^{k-1}  f_0(\rho_1,\rho_2) \\
    &\times \left(\int_{\sph{d-k-1}} e^{-i\omega_1\cdot\rho_1\eta}\d\sigma(\omega_1)  \right)
    \left(\int_{\sph{k-1}}e^{-i\omega_2\cdot\rho_2\zeta} \d\sigma(\omega_2)\right)  
     \d\rho_2\d\rho_1.
  \end{align*}
  The antipodal
  change of variables $(\omega_1,\omega_2)\rightsquigarrow-(\omega_1,\omega_2)$ then reduces the claim to identity
  \eqref{eq:SigmaHat}. The proof is complete.
\end{proof}

We will need to integrate $G_k$-symmetric functions in $\R^d$ over the unit sphere. 
The next result provides the corresponding formula.

\begin{lemma}\label{lem:SphInt}
Let $f:\R^d\to\Co$ be $G_k$-symmetric and integrable on $\sph{d-1}$, and
set $f_0(|\eta|,|\zeta|):=f(\eta,\zeta)$. Then the following identity holds:
\[
  \int_{\sph{d-1}} f(\eta,\zeta)\d\sigma(\eta,\zeta)
  =\frac{\sigma(\sph{d-k-1})\sigma(\sph{k-1})}{\sigma(\sph{d-1})} \int_0^1 r^{d-k-1}(1-r^2)^{\frac{k-2}2}
f_0(r,\sqrt{1-r^2}) \d r.
\]
\end{lemma}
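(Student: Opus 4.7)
The plan is to derive the identity by a standard change of variables that decouples the two factors of $\R^{d-k}\times \R^k$. I would parametrize points $\omega = (\eta,\zeta) \in \sph{d-1}$ via
\[
\omega = (\cos\theta\,\omega_1,\ \sin\theta\,\omega_2), \qquad \theta \in (0,\tfrac{\pi}{2}),\ \omega_1 \in \sph{d-k-1},\ \omega_2 \in \sph{k-1},
\]
so that $|\eta| = \cos\theta$ and $|\zeta| = \sin\theta$. The key technical input is the factorization
\[
\d\sigma(\omega) = (\cos\theta)^{d-k-1}(\sin\theta)^{k-1}\,\d\theta\,\d\sigma(\omega_1)\,\d\sigma(\omega_2),
\]
which is a classical instance of the join decomposition of $\sph{d-1}$ as $\sph{d-k-1}\ast\sph{k-1}$.

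Rather than computing a Jacobian directly, I would justify this factorization by evaluating $\int_{\R^d} g(x)\,\d x$ in two equivalent ways for an arbitrary $g \in L^1(\R^d)$. The standard polar decomposition of $\R^d$ gives
\[
\int_{\R^d} g\,\d x = \int_0^\infty \rho^{d-1}\int_{\sph{d-1}} g(\rho\omega)\,\d\sigma(\omega)\,\d\rho,
\]
while Fubini combined with the polar decompositions of $\R^{d-k}$ and $\R^k$ yields
\[
\int_0^\infty\!\!\int_0^\infty \rho_1^{d-k-1}\rho_2^{k-1}\!\!\int_{\sph{d-k-1}}\!\!\int_{\sph{k-1}} g(\rho_1\omega_1,\rho_2\omega_2)\,\d\sigma(\omega_1)\d\sigma(\omega_2)\d\rho_1\d\rho_2.
\]
Introducing $(\rho_1,\rho_2) = (\rho\cos\theta, \rho\sin\theta)$ with Jacobian $\rho$, and comparing the two expressions for arbitrary $g$, pins down the surface element on $\sph{d-1}$ in the $(\theta,\omega_1,\omega_2)$ coordinates.

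With the factorization in hand, the proof concludes quickly: by $G_k$-symmetry the integrand reduces to $f_0(\cos\theta,\sin\theta)$, independent of $\omega_1$ and $\omega_2$, so the inner integrals contribute the factor $\sigma(\sph{d-k-1})\sigma(\sph{k-1})$. A final substitution $r = \cos\theta$ transforms $(\sin\theta)^{k-1}\d\theta$ into $(1-r^2)^{(k-2)/2}\,\d r$, which converts the $\theta$-integral into the stated $r$-integral. The whole argument is essentially bookkeeping of surface-area constants; I do not anticipate any genuine analytic obstacle, and the only point requiring real care is verifying that the overall prefactor matches the expression claimed in the statement (which, upon testing with $f\equiv 1$ against the beta-function identity $\int_0^1 r^{d-k-1}(1-r^2)^{(k-2)/2}\d r = \tfrac{1}{2}B(\tfrac{d-k}{2},\tfrac{k}{2})$, reduces to manipulating ratios of $\Gamma$-factors).
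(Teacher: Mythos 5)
Your approach is essentially the same as the paper's, made self-contained: the paper's proof is a one-line citation of the slice-integration formula from Axler--Bourdon--Ramey \cite[Theorem~A.4]{ABR01}, whereas you rederive that factorization of $\d\sigma$ from scratch by comparing the one-step polar decomposition of $\int_{\R^d}g$ with the two-step decomposition through $\R^{d-k}\times\R^k$ and the substitution $(\rho_1,\rho_2)=(\rho\cos\theta,\rho\sin\theta)$. The reduction via $G_k$-invariance and the final change of variable $r=\cos\theta$ are both correct, so the argument is sound.

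One point worth flagging: the sanity check you defer to the end is worth actually carrying out, because it uncovers a discrepancy. With the paper's convention that $\sigma$ is the \emph{unnormalized} surface measure (so $\sigma(\sph{d-1})=2\pi^{d/2}\Gamma(d/2)^{-1}$), testing $f\equiv1$ against $\int_0^1 r^{d-k-1}(1-r^2)^{(k-2)/2}\d r=\tfrac12 B(\tfrac{d-k}2,\tfrac{k}2)$ shows that the right-hand side of the stated identity equals $1$, not $\sigma(\sph{d-1})$. Your derivation produces the constant $\sigma(\sph{d-k-1})\sigma(\sph{k-1})$ with no division by $\sigma(\sph{d-1})$; the extra denominator in the paper's statement almost certainly comes from translating the ABR formula, which is phrased for the normalized (probability) surface measure. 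This is harmless for the paper, since the lemma is only ever invoked up to implicit constants, but it means your answer and the printed one will not agree literally, and the resolution is that the printed prefactor should read $\sigma(\sph{d-k-1})\sigma(\sph{k-1})$.
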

\begin{proof}
This follows from slice integration \cite[Theorem A.4]{ABR01} and the $G_k$-invariance of $f$.
\end{proof}

As far as pointwise bounds for the extension operator of a $G_k$-symmetric function on $\sph{d-1}$ are
concerned, we have the following result.
\begin{proposition}\label{prop:decay}
There exists $C=C_{k,d}<\infty$ 
such that pointwise bound
\begin{equation}\label{eq:Pointwise}
|\widehat{F\sigma}(y,z)|\leq C_{k,d} \|F\|_{L^2(\sph{d-1})} (1+|y|)^{\frac{k+1-d}2} (1+|z|)^{\frac{1-k}2} 
\end{equation}
holds
for every $(y,z)\in\R^{d-k}\times\R^k$ and 
every $G_k$-symmetric $F\in L^2(\mathbb S^{d-1})$. 
\end{proposition}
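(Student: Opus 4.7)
The plan is to exploit the $G_k$-symmetry of $F$ to reduce $\widehat{F\sigma}$ to a one-dimensional integral, and then extract a clean tensor structure inherited from the decomposition $\R^d=\R^{d-k}\times\R^k$. Parameterizing $\omega\in\sph{d-1}$ as $(\rho\psi_1,\sqrt{1-\rho^2}\psi_2)$ with $\rho\in[0,1]$, $\psi_1\in\sph{d-k-1}$, $\psi_2\in\sph{k-1}$, the $G_k$-invariance of $F$ forces $F(\omega)=F_0(\rho)$ for a single-variable profile $F_0\colon[0,1]\to\Co$. Integrating out $\psi_1,\psi_2$ via~\eqref{eq:SigmaHat} applied in dimensions $d-k$ and $k$ yields, for $(y,z)\in\R^{d-k}\times\R^k$,
\[
\widehat{F\sigma}(y,z)=c_{k,d}\int_0^1 \rho^{d-k-1}(1-\rho^2)^{\frac{k-2}{2}}F_0(\rho)\,\widehat{\sigma_{d-k}}(\rho y)\,\widehat{\sigma_{k}}\!\bigl(\sqrt{1-\rho^2}\,z\bigr)\,d\rho,
\]
where $\sigma_n$ denotes surface measure on $\sph{n-1}$.

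Next I apply Cauchy--Schwarz in $\rho$ with respect to the weight $\rho^{d-k-1}(1-\rho^2)^{(k-2)/2}d\rho$; by Lemma~\ref{lem:SphInt} applied to $|F|^2$, one factor equals $\|F\|_{L^2(\sph{d-1})}$ up to a constant, and matters reduce to establishing
\[
J(y,z):=\int_0^1 \rho^{d-k-1}(1-\rho^2)^{\frac{k-2}{2}}\,|\widehat{\sigma_{d-k}}(\rho y)|^2\,|\widehat{\sigma_k}\!\bigl(\sqrt{1-\rho^2}\,z\bigr)|^2\,d\rho \;\lesssim\; (1+|y|)^{k+1-d}(1+|z|)^{1-k}.
\]
The dimension-$n$ analogue of~\eqref{eq:AsymptSigmaHat}, namely $|\widehat{\sigma_n}(\xi)|\lesssim (1+|\xi|)^{(1-n)/2}$, bounds the two squared decay factors by $(1+\rho|y|)^{k+1-d}$ and $(1+\sqrt{1-\rho^2}\,|z|)^{1-k}$ respectively.

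The substitution $\rho=\sin\theta$ then transforms $J(y,z)$ into a constant multiple of
\[
\int_0^{\pi/2}\left(\frac{\sin\theta}{1+|y|\sin\theta}\right)^{d-k-1}\left(\frac{\cos\theta}{1+|z|\cos\theta}\right)^{k-1}d\theta.
\]
The key structural observation is that the Jacobian exponents match the decay exponents exactly; this is no accident but a direct reflection of the $O(d-k)\times O(k)$-symmetry encoded in the sphere dimension $d-1=(d-k-1)+(k-1)+1$. Consequently the elementary inequality $s/(1+as)\leq 1/(1+a)$ for $s\in[0,1]$, $a\geq 0$, applicable thanks to $d-k-1\geq 1$ and $k-1\geq 1$ (which follow from the hypothesis $2\leq k\leq d-2$), supplies the pointwise bound $(1+|y|)^{k+1-d}(1+|z|)^{1-k}$ for the integrand, and hence the desired bound for $J(y,z)$. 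Taking square roots then completes the proof. There is no genuine obstacle here; the main task is simply to choose coordinates in which the tensor structure becomes transparent, after which a purely algebraic inequality does the rest.
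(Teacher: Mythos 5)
Your proof is correct and follows the paper's strategy: reduce $\widehat{F\sigma}$ to a one-dimensional slice integral, apply Cauchy--Schwarz against the weight $\rho^{d-k-1}(1-\rho^2)^{(k-2)/2}\,\d\rho$ so that Lemma~\ref{lem:SphInt} produces $\|F\|_{L^2(\sph{d-1})}$, and bound the two spherical factors via the decay estimate~\eqref{eq:AsymptSigmaHat}. Your finish --- substituting $\rho=\sin\theta$ so the Jacobian exponents match the decay exponents and then using $s/(1+as)\le 1/(1+a)$ for $s\in[0,1]$, which requires $d-k-1\ge 1$ and $k-1\ge 1$ --- is a somewhat slicker way to bound the remaining integral than the paper's second Cauchy--Schwarz combined with a case analysis in $|y|\le 1$, $|y|>1$ and similarly for $z$, but the underlying computation is the same.
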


\begin{proof}
Let $F\in L^2(\sph{d-1})$ be $G_k$-symmetric and given.
Start by noting that the function $\widehat{F\sigma}$ is real-analytic since it is the Fourier transform
of a compactly supported distribution.
Set $F_0(r):=F(r\omega,\sqrt{1-r^2}\nu)$ for $r\in[0,1], \omega\in\sph{d-k-1},\nu\in\sph{k-1}$; see \cite[p.~241]{ABR01}.
Since $F$ is 
integrable on $\sph{d-1}$,  we can appeal to the slice integration formula from \cite[Theorem A.4]{ABR01} to conclude,  via a passage to polar coordinates, that
\[ 
  \widehat{F\sigma}(y,z)\simeq \int_0^1 r^{d-k-1}(1-r^2)^{\frac{k-2}2} F_0(r) \left(\int_{\sph{d-k-1}}e^{i
  y\cdot r\omega}\d\sigma(\omega)\right)\left(\int_{\sph{k-1}}e^{iz\cdot \sqrt{1-r^2}\nu}\d\sigma(\nu)\right)
  \d r.\] 
Note that the implicit constant depends only on $k,d$.
The Cauchy--Schwarz inequality, Lemma \ref{lem:SphInt}, 
 and estimate \eqref{eq:AsymptSigmaHat} together imply
 \begin{align*}
 |\widehat{F\sigma}(y,z)|^2\lesssim\|F\|^2_{L^2(\sph{d-1})}   \int_0^1 r^{d-k-1}(1-r^2)^{\frac{k-2}2} (1+r|y|)^{k+1-d} (1+\sqrt{1-r^2}|z|)^{1-k}\d r. 
 \end{align*}
The pointwise bound \eqref{eq:Pointwise} follows from this via another application of the Cauchy--Schwarz inequality together with elementary considerations in both regimes $|y|\leq 1,|y|>1$ separately, and similarly for $z$.
This concludes the proof of the proposition.
\end{proof}

For the purposes of the upcoming analysis in \S~\ref{sec:ProofThmExistence}, we will be interested in the following straightforward consequence of Proposition \ref{prop:decay} which, in the language of concentration compactness theory \cite{Li84}, will preclude {\it vanishing}  (i.e.\@ mass sent to infinity) of maximizing sequences.

\begin{corollary}\label{cor:decay}
Let $d\geq 4$ and $k\in \{2,3,\ldots,d-2\}$.
Then, for every $\varepsilon>0$, there exists
$R=R(k,d,\varepsilon)<\infty$ for which $|\widehat{F\sigma}(x)|<\varepsilon$ if $|x|>R$, for every $G_k$-symmetric $F\in L^2(\sph{d-1})$ such that $\|F\|_{L^2(\sph{d-1})}=1$.
\end{corollary}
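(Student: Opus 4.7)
The plan is to read off Corollary \ref{cor:decay} as an essentially immediate consequence of Proposition \ref{prop:decay}, using only the sign of the two decay exponents in the pointwise bound together with the elementary observation that at least one of $|y|,|z|$ must carry a nontrivial portion of $|x|$.

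The constraint $k\in\{2,\ldots,d-2\}$ is the whole point: it ensures $\tfrac{k+1-d}{2}\leq -\tfrac{1}{2}$ and $\tfrac{1-k}{2}\leq -\tfrac{1}{2}$, so \emph{both} factors appearing on the right-hand side of \eqref{eq:Pointwise} are bounded above by $1$ and are genuinely decaying. Set
\[
\alpha := \min\!\Bigl(\tfrac{d-k-1}{2},\tfrac{k-1}{2}\Bigr) \geq \tfrac{1}{2}.
\]
Given $x=(y,z)\in\R^{d-k}\times\R^k$, we have $\max(|y|,|z|)\geq |x|/\sqrt{2}$, and therefore, using that each of the two factors in \eqref{eq:Pointwise} is $\leq 1$ and that the larger variable contributes the full decay,
\[
(1+|y|)^{\frac{k+1-d}{2}}(1+|z|)^{\frac{1-k}{2}} \leq \bigl(1+\max(|y|,|z|)\bigr)^{-\alpha} \leq \bigl(1+|x|/\sqrt{2}\bigr)^{-\alpha}.
\]

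Plugging this into the bound of Proposition \ref{prop:decay} with $\|F\|_{L^2(\sph{d-1})}=1$ gives
\[
|\widehat{F\sigma}(x)| \leq C_{k,d}\,\bigl(1+|x|/\sqrt{2}\bigr)^{-\alpha} \quad\text{for all } x\in\R^d,
\]
with a constant $C_{k,d}$ independent of $F$. Given $\varepsilon>0$, it now suffices to choose $R=R(k,d,\varepsilon)$ large enough that $C_{k,d}(1+R/\sqrt{2})^{-\alpha}<\varepsilon$, which can clearly be done since $\alpha>0$; this $R$ works simultaneously for every normalized $G_k$-symmetric $F\in L^2(\sph{d-1})$.

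There is no real obstacle here: the only subtlety is to notice that the two decay factors in Proposition \ref{prop:decay} are \emph{separately} bounded by $1$ (rather than having to argue that both $|y|$ and $|z|$ are large simultaneously when $|x|$ is large, which would of course be false along the coordinate subspaces $\R^{d-k}\times\{0\}$ and $\{0\}\times\R^k$). The hypothesis $2\leq k\leq d-2$ is used crucially—and only—to guarantee that both exponents $(k+1-d)/2$ and $(1-k)/2$ are strictly negative.
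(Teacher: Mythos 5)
Your proof is correct and is exactly the argument the paper has in mind; the paper simply labels the corollary a ``straightforward consequence of Proposition~\ref{prop:decay}'' and omits the details. The key observation—that $2\leq k\leq d-2$ makes both exponents in~\eqref{eq:Pointwise} at most $-\tfrac12$, so that whichever of $|y|,|z|$ is $\geq |x|/\sqrt2$ supplies uniform decay while the other factor is bounded by~$1$—is precisely what makes the deduction go through, and you have laid it out cleanly.
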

\begin{remark}
\emph{That no such property can hold for general $F\in L^2(\sph{d-1})$ follows at once from the fact that the extension operator intertwines modulation and translation:
\[\mathcal R^\ast{(e^{iy\cdot}F)}(x)
=\int_{\sph{d-1}} e^{iy\cdot \omega}F(\omega) e^{ix\cdot\omega}\d\sigma(\omega)
=\int_{\sph{d-1}} F(\omega) e^{i(x+y)\cdot\omega}\d\sigma(\omega)
=\mathcal R^\ast(F)(x+y).\]
Indeed, if a nonzero function $F$ and its modulation $e^{iy\cdot} F$ are both $G_k$-symmetric on $\sph{d-1}$, then necessarily $y=0$.}
\end{remark}

\subsection{Oscillatory integrals}
We will use the following simple bound on a certain class of oscillatory integrals.

\begin{lemma}\label{prop:OscillatoryIntegral}
  Let $0<\gamma\neq 1$. Then there exists a constant $C=C_\gamma<\infty$ such that, for every $a\geq 1$ and $\lambda\in
  [-2,2]\setminus\{0\}$, the following inequality holds:
  \[
    \left| \int_a^\infty r^{-\gamma} e^{i\lambda r} \d r\right|
    \leq  C\begin{cases}
      |\lambda|^{\gamma-1}  &\text{if } 0<\gamma< 1, \\
      a^{1-\gamma}   &\text{if } \gamma>1.
    \end{cases}
  \]
\end{lemma}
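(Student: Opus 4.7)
The plan is to treat the two sub-cases quite differently. When $\gamma>1$, the integrand is absolutely integrable on $[a,\infty)$, so the triangle inequality alone would yield
\[
  \left|\int_a^\infty r^{-\gamma} e^{i\lambda r}\d r\right|
  \le \int_a^\infty r^{-\gamma}\d r
  = \frac{a^{1-\gamma}}{\gamma-1},
\]
independently of $\lambda$, which is exactly the claimed bound. The real work lies in the range $0<\gamma<1$, where the integrand is not absolutely integrable and oscillation must be exploited.

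For $0<\gamma<1$, I would first observe that the case $\lambda<0$ reduces to $\lambda>0$ by complex conjugation, and then rescale via $s=\lambda r$ to rewrite
\[
  \int_a^\infty r^{-\gamma} e^{i\lambda r}\d r
  = \lambda^{\gamma-1} \int_b^\infty s^{-\gamma} e^{is}\d s, \qquad b := a\lambda \in (0,\infty).
\]
The problem thus reduces to a single uniform bound: $|I(b)| \le C_\gamma$ for all $b>0$, where $I(b):=\int_b^\infty s^{-\gamma}e^{is}\d s$. I would then split at $b=1$. For $b\ge 1$, a single integration by parts produces
\[
  I(b) = \frac{-e^{ib}}{i\, b^{\gamma}} + \frac{\gamma}{i}\int_b^\infty s^{-\gamma-1} e^{is}\d s,
\]
and now each term is absolutely controlled: the boundary contribution is $b^{-\gamma}\le 1$, and the remaining integral is bounded by $\int_b^\infty s^{-\gamma-1}\d s = b^{-\gamma}/\gamma\le 1/\gamma$. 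For $0<b<1$, I would split $I(b) = \int_b^1 s^{-\gamma}e^{is}\d s + I(1)$; the first piece is controlled crudely by $\int_0^1 s^{-\gamma}\d s = 1/(1-\gamma)$, while $I(1)$ is a fixed finite constant by the previous case.

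The main obstacle is the reconciliation of the two regimes: a direct integration by parts on the original integral produces a bound proportional to $a^{-\gamma}/|\lambda|$, which matches the desired $|\lambda|^{\gamma-1}$ only in the regime $a|\lambda|\gtrsim 1$. In the complementary regime $a|\lambda|<1$ the integration by parts loses control, and one has no choice but to fall back on the local integrability of $s^{-\gamma}$ near the origin. The rescaling $s=\lambda r$ is what makes the natural cut-off $a|\lambda| = 1$ visible, and dispatching the two sides of that threshold separately is precisely what yields a bound that is uniform in both parameters. A minor technical point that I would pin down with care is the decay of the boundary term $s^{-\gamma}e^{is}$ at infinity in the integration by parts, which should be interpreted as a limit of the improper integral (or justified by a final one-step integration by parts on a truncated interval before passing to the limit).
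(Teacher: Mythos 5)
Your proof is correct and follows essentially the same route as the paper: rescale via $s=\lambda r$ to reduce to a uniform bound on $\int_b^\infty s^{-\gamma}e^{is}\,\mathrm{d}s$, then use integration by parts when $0<\gamma<1$ and the triangle inequality when $\gamma>1$. You actually supply a detail the paper leaves implicit — the split at $b=1$ to handle the small-$b$ regime via local integrability near the origin — which is a welcome clarification rather than a departure.
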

\begin{proof}
  No generality is lost in assuming that $\lambda\in(0,2]$. Changing variables $\lambda r=\rho$,
  \[
    \int_a^\infty r^{-\gamma} e^{i\lambda r} \d r
    = \lambda^{\gamma-1} \int_{\lambda a}^\infty \rho^{-\gamma} e^{ i\rho} \d\rho,
  \]
  we see that the desired conclusion follows from
  \[
    \sup_{R>0} \left| \int_R^\infty \rho^{-\gamma} e^{i\rho} \d\rho\right|
    < \infty \quad\text{if } 0<\gamma<1,\qquad  
    \sup_{R>0}\,  R^{\gamma-1} \left| \int_R^\infty \rho^{-\gamma} e^{i\rho} \d\rho\right|
    < \infty \quad\text{if } \gamma>1.
  \]
  The first estimate follows from integration by parts,
  and the second estimate follows even more simply from an application of the triangle inequality.
\end{proof}

\subsection{Weighted Hausdorff--Young Inequality}
While Lemma~\ref{lem:weightedHY} below is clearly related to Pitt's inequality (also known as Hardy's
inequality; see~\cite{Be08, Be12}), we choose to present a self-contained, short proof of the special
one-dimensional case which will be directly relevant to our analysis.
For convenience, set $\mathcal F(f):=\widehat f$.

\begin{lemma} \label{lem:weightedHY} 
Let $1<p\leq 2\leq q<\infty$, and $\delta:=1-\frac1p-\frac1q$. 
If $0\leq\delta<1$, then
 the estimate 
\[  \|\mathcal F (f |\cdot|^{-\delta})\|_{L^q(\R)} \leq C(p,q) \| f \|_{L^p(\R)}\]
holds for every $f\in L^p(\R)$.
\end{lemma}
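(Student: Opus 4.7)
The plan is to realize the weight $|x|^{-\delta}$ on the Fourier side as a convolution with a Riesz kernel, and then chain together the Hardy--Littlewood--Sobolev inequality with the classical Hausdorff--Young inequality. The case $\delta=0$ reduces $q$ to $p'$ and the conclusion is immediate from Hausdorff--Young, so from now on I would assume $0<\delta<1$.

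For $0<\delta<1$, the function $|\cdot|^{-\delta}$ is a tempered distribution on $\R$, and its distributional Fourier transform satisfies $\mathcal F(|\cdot|^{-\delta})=c_\delta|\cdot|^{\delta-1}$ for an explicit constant $c_\delta\in\R\setminus\{0\}$. For $f\in\mathcal S(\R)$, the convolution theorem then yields the pointwise identity
\[
\mathcal F(f\,|\cdot|^{-\delta})(\xi)=c_\delta\bigl(\widehat{f}\ast|\cdot|^{\delta-1}\bigr)(\xi),
\]
valid a.e., the convolution being absolutely convergent thanks to the Schwartz decay of $\widehat f$. I would then apply the Hardy--Littlewood--Sobolev inequality to the Riesz potential $|\cdot|^{\delta-1}$, whose exponent of homogeneity $-(1-\delta)$ lies in $(-1,0)$, to deduce
\[
\|\widehat{f}\ast|\cdot|^{\delta-1}\|_{L^q(\R)}\lesssim \|\widehat{f}\|_{L^{p'}(\R)},
\]
provided $1<p'<q<\infty$ and $\tfrac{1}{p'}-\tfrac{1}{q}=\delta$; both conditions are exactly the hypotheses $1<p\leq 2\leq q<\infty$ together with the scaling relation $\delta=1-\tfrac1p-\tfrac1q$ (the strict inequality $p'<q$ uses $\delta>0$, and $q<\infty$ has been assumed). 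Finally, Hausdorff--Young gives $\|\widehat{f}\|_{L^{p'}(\R)}\lesssim\|f\|_{L^p(\R)}$, which combined with the previous estimate yields the claim on the dense subclass $\mathcal S(\R)\subset L^p(\R)$.

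The argument is then closed by the standard density/extension argument: the map $f\mapsto \mathcal F(f|\cdot|^{-\delta})$, a priori defined on $\mathcal S(\R)$, extends by continuity to $L^p(\R)$ thanks to the estimate just established. I do not anticipate any serious obstacle here; the only point to be slightly careful about is the distributional identity for $\mathcal F(|\cdot|^{-\delta})$ and the justification of the convolution representation on the Schwartz class, but both are classical and can be invoked without further comment. The endpoint $\delta\to 1^-$ (forcing $p\to 1^+$ or $q\to\infty$) is correctly excluded by the hypothesis $\delta<1$ together with $p>1$ and $q<\infty$, which is precisely what is needed to stay inside the HLS range.
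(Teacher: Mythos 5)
Your argument is correct and coincides with the paper's own proof: both compute $\mathcal F(|\cdot|^{-\delta})\simeq|\cdot|^{\delta-1}$, pass through the convolution identity $\mathcal F(f|\cdot|^{-\delta})=\widehat f\ast\mathcal F(|\cdot|^{-\delta})$, and then chain the Hardy--Littlewood--Sobolev inequality with Hausdorff--Young. The only difference is that you spell out the exponent bookkeeping and the density step, which the paper leaves implicit.
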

 \begin{proof}
 If $\delta=0$, then the result amounts to the Hausdorff--Young inequality.
 If $\delta\in(0,1)$, then we have that 
 \[\|\mathcal F (f |\cdot|^{-\delta})\|_{L^q(\R)}
 =\|  \widehat f\ast \mathcal F(|\cdot|^{-\delta})\|_{L^q(\R)}
 \simeq \|  \widehat f\ast (|\cdot|^{-(1-\delta)})\|_{L^q(\R)}
 \lesssim \|\widehat f\|_{L^{p'}(\R)}
 \lesssim \|f\|_{L^p(\R)}.\]
 This chain of estimates results from consecutive applications of the Hardy--Littlewood--Sobolev
  \cite[p.~354]{St93}  and the Hausdorff--Young inequalities.
 \end{proof}

\subsection{Weighted Hardy--Littlewood--Sobolev Inequality}

Our analysis will rely on the $L^p$--$L^q$ mapping properties of the
following family of integral operators, indexed by $a,b\in\R$ and acting on functions $f:\R_+:=[0,\infty)\to\Co$ via
\begin{align*}
\mathcal T_{a,b}(f)(x)&:=x^{-a}\int_{y\leq x} y^{-b} f(y)\d y.  
\end{align*}

\begin{lemma}\label{lem:MappingT}
Let $1< p\leq q<\infty$ and $a,b\in\R$.
Then
$\mathcal T_{a,b}:L^p(\R_+)\to L^q(\R_+)$ is bounded if $bp'<1$ and $\frac1{p'}+\frac1{q}=a+b$.
\end{lemma}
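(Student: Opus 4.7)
The plan is to reduce this weighted integral inequality on the half-line to a translation-invariant convolution on $\R$, via a logarithmic change of variables, and then apply Young's convolution inequality. The scaling hypothesis $\frac{1}{p'}+\frac{1}{q}=a+b$ is precisely what makes the resulting kernel depend only on the difference of the new variables, while the integrability hypothesis $bp'<1$ is exactly what is needed to place this kernel in the appropriate Lebesgue space.

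More concretely, I would substitute $x=e^s$, $y=e^t$, and renormalize by setting $g(t):=f(e^t)e^{t/p}$ and $G(s):=\mathcal{T}_{a,b}(f)(e^s)e^{s/q}$, so that $\|g\|_{L^p(\R)}=\|f\|_{L^p(\R_+)}$ and $\|G\|_{L^q(\R)}=\|\mathcal{T}_{a,b}(f)\|_{L^q(\R_+)}$. A direct computation then yields
\[G(s)=e^{(\frac1q-a)s}\int_{-\infty}^s e^{(\frac1{p'}-b)t}g(t)\,\d t.\]
Invoking the hypothesis $\frac{1}{p'}+\frac{1}{q}=a+b$ (equivalently $\frac1q-a=b-\frac1{p'}$), the two exponents of $s$ and $t$ combine into a single exponent of $s-t$, so that $G=K\ast g$ on $\R$ with $K(u):=e^{(b-\frac{1}{p'})u}\mathbbm{1}_{[0,\infty)}(u)$.

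I would then appeal to Young's convolution inequality with exponent $r$ determined by $\frac1r=1-\frac1p+\frac1q=\frac1{p'}+\frac1q$. Note that the assumption $p\leq q$ yields $\frac1r\leq 1$, hence $r\geq 1$; moreover $1<p$ and $q<\infty$ ensure $r<\infty$. The hypothesis $bp'<1$ is equivalent to $b-\frac{1}{p'}<0$, i.e., $K$ decays exponentially at $+\infty$ (and vanishes on $(-\infty,0)$), so $K\in L^r(\R)$ with norm depending only on $p,q,b$. Young's inequality therefore gives $\|G\|_{L^q(\R)}\leq \|K\|_{L^r(\R)}\|g\|_{L^p(\R)}$, which is the asserted bound.

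I do not foresee a real obstacle: the identification $G=K\ast g$ is essentially forced by the scaling condition, and once this reduction is in place the rest is a routine application of Young's inequality. The only minor point to verify carefully is that the Young exponent $r$ satisfies $1\leq r<\infty$ under the hypotheses $1<p\leq q<\infty$, which I have already checked above.
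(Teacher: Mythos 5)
Your proof is correct and takes a genuinely different route from the paper's. The paper proceeds by applying H\"older's inequality to obtain the pointwise bound $|\mathcal T_{a,b}(f)(x)|\lesssim x^{\frac1{p'}-a-b}\|f\|_{L^p(\R_+)}$, which gives a weak-type $L^p\to L^{q,\infty}$ estimate, and then upgrades this to the strong type via Marcinkiewicz interpolation between nearby weak-type bounds. You instead make the classical logarithmic substitution, under which the scaling condition $\frac1{p'}+\frac1q=a+b$ converts the operator into convolution with the one-sided exponential kernel $K(u)=e^{(b-\frac1{p'})u}\mathbbm{1}_{[0,\infty)}(u)$, and the hypothesis $bp'<1$ makes $K$ lie in $L^r(\R)$ for the Young exponent $\frac1r=\frac1{p'}+\frac1q$ (with $1\leq r<\infty$ precisely because $1<p\leq q<\infty$); Young's inequality then closes the argument in one step. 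Your approach is more direct and delivers the strong-type bound without any interpolation, and it makes the role of both hypotheses completely transparent (scaling gives translation invariance, integrability gives decay of $K$). The paper's approach is slightly more robust in spirit, since the intermediate pointwise bound and weak-type estimate are reusable by-products, but for the lemma as stated your argument is the cleaner one. Both are standard proofs of Hardy-type inequalities.
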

\begin{proof}
For any $x\geq 0$, from H\"older's inequality it follows that
 \[|\mathcal T_{a,b}(f)(x)|\leq x^{-a}\left(\int_0^x y^{-bp'}\d y\right)^{\frac1{p'}}
 \|f\|_{L^p(\R_+)}\simeq x^{\frac1{p'}-a-b}\|f\|_{L^p(\R_+)},\] 
 where the implicit constant is finite as long as $bp'<1$.
  This implies 
\begin{equation*}
\|\mathcal T_{a,b}(f)\|_{L^{q,\infty}(\R_+)}\lesssim \left\|(\cdot)^{\frac1{p'}-a-b}\right\|_{L^{q,\infty}(\R_+)}\|f\|_{L^p(\R_+)},
\end{equation*}
where the first term on the right-hand side is finite precisely when $\frac1{p'}+\frac1{q}=a+b$. 
To conclude, note that 
the claimed strong-type estimate for $1<p\leq q<\infty$ follows from the Marcinkiewicz interpolation theorem \cite[p.~9]{BL}
applied to the bounds $L^p(\R_+)\to L^{q,\infty}(\R_+)$ 
with $(\frac{1}{p},\frac{1}{q})\in\{(1-\max\{b,0\}-\delta, a+\min\{b,0\}-\delta),(1-a-b+\delta,\delta)\}$, for sufficiently small
enough $\delta>0$. \
Indeed, all exponents $1<p\leq q<\infty$ satisfying $bp'<1$ and $\frac1{p'}+\frac1{q}=a+b$ are covered by this interpolation procedure since $0<a+b\leq 1$.
This finishes the proof of the lemma.
\end{proof}

Given $\ell\in(0,\infty)$ and certain $a,b\in\R$, the need will arise to consider the following related family of integral operators, acting on functions $f:[0,\ell]\to\Co$ via:
\begin{align}
\mathcal S_{a,b}(f)(x)&:= x^{-a}\int_{y\leq x}  (x-y)^{-b} f(y)\d y.\label{eq:defOpS}
\end{align}
Useful\footnote{Albeit non-optimal. We omit trivial improvements of Lemma \ref{lem:MappingS} (obtainable e.g.\@ via H\"older's inequality) which are not directly relevant to the forthcoming analysis.} 
estimates for $\mathcal S_{a,b}$ follow from the Stein--Weiss inequality \cite{SW58},
which extends the Hardy--Littlewood--Sobolev inequality for fractional integrals.

\begin{lemma}\label{lem:MappingS}
Let $\ell\in(0,\infty), 1<p\leq q<\infty$, $a\geq 0$, and $0<b<1$.
Then $\mathcal S_{a,b}:L^p([0,\ell])\to L^q([0,\ell])$ is bounded if   
$\frac1{p'}+\frac1{q}\geq a+b$.
\end{lemma}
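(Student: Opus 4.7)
The plan is to deduce the bound from the Stein--Weiss inequality \cite{SW58}, first reducing to the equality case $\frac{1}{p'}+\frac{1}{q}=a+b$ and then invoking one-sided fractional integration with a power weight. Since $[0,\ell]$ has finite measure, if we are given the strict inequality $\frac{1}{p'}+\frac{1}{q}>a+b$, we may pick $\tilde q\in[q,\infty)$ such that $\frac{1}{p'}+\frac{1}{\tilde q}=a+b$, and then bound $\|\mathcal S_{a,b}f\|_{L^q([0,\ell])}\lesssim \|\mathcal S_{a,b}f\|_{L^{\tilde q}([0,\ell])}$ via H\"older's inequality. Thus it suffices to treat the scale-invariant endpoint $\frac{1}{p'}+\frac{1}{q}=a+b$.

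At the equality point, I would factor $\mathcal S_{a,b}=M_a\circ I_{1-b}^{+}$, where $M_a g(x):=x^{-a}g(x)$ and
\[
I_{1-b}^+f(x):=\int_0^x (x-y)^{-b}f(y)\,{\rm d}y
\]
is the (one-sided) Riemann--Liouville fractional integral of order $1-b\in(0,1)$. Extending $f$ by zero outside $[0,\ell]$, we recognize $\mathcal S_{a,b}f$ as an instance of a weighted fractional integral on $\R_+$ with power weight $x^{-a}$ on one side and weight $1$ on the other, which is precisely the setup of the Stein--Weiss inequality (with parameters $\alpha=a$, $\beta=0$, and fractional order $\gamma=1-b$). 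The scaling condition reads $\frac{1}{p}-\frac{1}{q}=\gamma-\alpha-\beta=1-a-b$, which is exactly our equality hypothesis, and the sign condition $\alpha+\beta\geq 0$ is given by $a\geq 0$.

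The only Stein--Weiss hypothesis that does not follow transparently from the hypotheses of the lemma is the condition $a<\frac{1}{q}$; this is the step I expect to require the most care and constitutes the main obstacle. It is automatic precisely when $bp'>1$, since then $a=\frac{1}{p'}+\frac{1}{q}-b<\frac{1}{q}$. In the complementary regime $bp'\leq 1$, the operator $\mathcal S_{a,b}$ can be controlled more elementarily: H\"older's inequality applied inside the defining integral gives
\[
|\mathcal S_{a,b}f(x)|\leq x^{-a}\Bigl(\int_0^x (x-y)^{-bp'}\,{\rm d}y\Bigr)^{1/p'}\|f\|_{L^p}\lesssim x^{1/p'-a-b}\|f\|_{L^p},
\]
whence one recovers the desired bound via the same weak-type-plus-interpolation trick as in the proof of Lemma~\ref{lem:MappingT} (the boundary $bp'=1$ then falling out by a limiting argument). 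Combining these two regimes with the finite-measure reduction completes the proof.
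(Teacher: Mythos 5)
Your route is genuinely different from the paper's. You keep the full weight $x^{-a}$ on the output side and apply a one-sided Stein--Weiss inequality with parameters $(\alpha,\beta)=(a,0)$, which forces a case analysis on whether $a<1/q$. The paper instead uses the constraint $y\leq x$ together with $a\geq 0$ to split and transfer part of the weight, namely $x^{-a}=x^{-\delta a}\cdot x^{-(1-\delta)a}\leq x^{-\delta a}\cdot y^{-(1-\delta)a}$ for $\delta\in(0,1)$, producing a genuinely two-sided Stein--Weiss configuration. Because $\delta$ is a free parameter (and one is also free to shrink $\tilde p\leq p$ and enlarge $\tilde q\geq q$), the strict conditions $\frac{1}{\tilde p'}>(1-\delta)a$ and $\frac{1}{\tilde q}>\delta a$ can always be met once $b>0$, so one Stein--Weiss application handles every admissible $(p,q)$ uniformly and the boundary $a=1/q$ never arises.

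The gap in your write-up sits precisely at that boundary. When $\frac{1}{p'}+\frac{1}{q}=a+b$ and $bp'=1$, one has $a=1/q$: the one-sided Stein--Weiss hypothesis degenerates, and simultaneously the inner H\"older integral $\int_0^x(x-y)^{-bp'}\,\d y$ diverges, so both of your mechanisms fail at once. A ``limiting argument'' is not automatic here --- perturbing $b$ or $p$ changes the operator as well as the function space. What does close the gap is running Marcinkiewicz interpolation across the threshold $p=1/(1-b)$, taking one weak-type endpoint from the $bp'>1$ (Stein--Weiss) regime and one from the $bp'<1$ (H\"older) regime, both along the scaling line; as written, your proposed interpolation endpoints are drawn only from the H\"older side and therefore cannot bracket $p=1/(1-b)$. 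A smaller point: reducing to the scaling line by increasing $\tilde q$ alone requires $\frac{1}{p'}<a+b$; if $\frac{1}{p'}\geq a+b$ no such finite $\tilde q$ exists (that regime is easily dispatched via H\"older and finite measure, but it must be covered). With these repairs your approach would yield a valid alternative proof; the paper's weight-splitting trick, by contrast, produces interior Stein--Weiss parameters in one stroke and needs no case distinction.
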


\begin{remark}
\emph{In the endpoint case $\frac1{p'}+\frac1{q}=a+b$, the assumption $p\leq q$ is 
in fact
necessary for the $L^p$--$L^q$ boundedness of $\mathcal S_{a,b}$.
Indeed, if $p>q$, then let $0<\varepsilon<\frac{p}q-1$ and
\[f_\varepsilon(x):=x^{-\frac1p} |\log(x)|^{-\frac{1+\varepsilon}p}\mathbbm1_{[0,\frac12]}(x).\]
It is easy to check that $f_\varepsilon\in L^p([0,1])$, but that $|S_{a,b}(f_\varepsilon)(x)|\gtrsim x^{\frac1{p'}-a-b} |\log(x)|^{-\frac{1+\varepsilon}p}$ for all sufficiently small $x>0$. In particular, $S_{a,b}(f_\varepsilon)\notin L^q([0,1])$ in view of our choice of $\varepsilon$.}
\end{remark}
\begin{proof}[Proof of Lemma \ref{lem:MappingS}]
For every $0\leq x\leq\ell$, we have 
\[|\mathcal S_{a,b}(f)(x)|
  \leq  |x|^{-\delta a}\int_{\R}  |x-y|^{-b} |y|^{-(1-\delta)a} |(f\, \mathbbm1_{[0,\ell]})(y)|\d y,\]
as long as $0<\delta<1$.
The hypotheses make it possible to choose $\delta\in(0,1)$ and exponents $\tilde p,\tilde q$ satisfying $1<\tilde p\leq p\leq q\leq \tilde q<\infty$, in such a way that 
 \[\frac{1}{\tilde p'}>(1-\delta)a,\,\,\,\frac{1}{\tilde q}>\delta a,\,\,\, \frac{1}{\tilde p'}+\frac{1}{\tilde q}=\delta a+b+(1-\delta)a.\] 
From the Stein--Weiss inequality \cite[Theorem $B_1^\ast$]{SW58}, it then follows that 
$\|\mathcal S_{a,b}(f)\|_{L^{\tilde q}(\R)} \lesssim \|f\,\mathbbm1_{[0,\ell]}\|_{L^{\tilde p}(\R)}$,
which implies the desired conclusion via the inclusion of Lebesgue spaces on bounded intervals since
$\tilde p\leq p$ and $q\leq \tilde q$.
This concludes the proof of the lemma. 
\end{proof}

In the course of the proof of Proposition \ref{prop:GenSB} below, we will also invoke bounds for the adjoint of the operator $\mathcal S_{a,b}$, whose form we record here:
\begin{equation}\label{eq:AdjointS}
  \mathcal S_{a,b}^*(g)(y):= \int_{x\geq y} (x-y)^{-b} x^{-a} g(x)\d x.
\end{equation}
Naturally, $\mathcal S_{a,b}^*:L^{q'}([0,\ell])\to L^{p'}([0,\ell])$ if and only if 
$\mathcal S_{a,b}:L^{p}([0,\ell])\to L^{q}([0,\ell])$.


\section{Weighted 2D Restriction-type estimates} \label{sec:WRtypeE}

In this section, we analyze the $L^p(\R^2)$--$L^q(\sph{1})$ mapping properties of the operator 
$\mathcal R_{\alpha,\beta}$, defined as follows:
\begin{equation}\label{eq:DefRab}
  \mathcal R_{\alpha,\beta}(g)(\omega) 
  := \int_{\R^2}  g(x) \mathbbm{1}_{|\omega_1||x_1|\geq  1}\mathbbm{1}_{|\omega_2||x_2|\geq 1}
  (1+|x_1|)^{-\alpha}(1+|x_2|)^{-\beta} e^{-ix\cdot(|\omega_1|,|\omega_2|)} \d x;
\end{equation}
here $\alpha,\beta>0, \omega=(\omega_1,\omega_2)\in\sph{1}$, and $x=(x_1,x_2)\in\R^{1+1}$. 
We highlight the role of the indicator functions in the integrand of \eqref{eq:DefRab}.
Without them, the resulting operator would have similar, but not identical, mapping properties to that of $\mathcal R_{\alpha,\beta}$, which by themselves do not appear sufficient to prove Theorem~\ref{thm:main}. 
Considering the adjoint operator,
\[
  \mathcal R_{\alpha,\beta}^*(F)(x)
  = (1+|x_1|)^{-\alpha} (1+|x_2|)^{-\beta} 
  \int_{\sph{1}} F(\omega)  \mathbbm{1}_{|\omega_1||x_1|\geq  1}\mathbbm{1}_{|\omega_2||x_2|\geq 1}
  e^{ix\cdot(|\omega_1|,|\omega_2|)} \d\sigma(\omega),
\] 
we investigate the $L^{q'}(\sph{1})$--$L^{p'}(\R^2)$ boundedness of 
$\mathcal R_{\alpha,\beta}^*$, 
and start with the important special case when $p'=p=2$; see also \cite{BS92}.  
 
\begin{proposition}\label{prop:GenSB}
Let $2\leq q<\infty$ and $\alpha,\beta>0$ be such that $\frac12<\alpha+\beta<1$.
Then $\mathcal R_{\alpha,\beta}^*:L^{q'}(\sph{1})\to L^2(\R^2)$ is  bounded if
$\alpha+\beta+\min\{\alpha,\beta\} \geq \frac{3}{2}-\frac{1}{q}$.
\end{proposition}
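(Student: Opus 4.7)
My strategy is the $TT^*$ method: setting $T = \mathcal R_{\alpha,\beta}$, the sought bound $T^* : L^{q'}(\sph{1}) \to L^2(\R^2)$ is equivalent to the boundedness of $TT^* = \mathcal R_{\alpha,\beta}\mathcal R^*_{\alpha,\beta} : L^{q'}(\sph{1}) \to L^q(\sph{1})$. A Fubini computation, exploiting the product structure of $\mathcal R^*_{\alpha,\beta}$, factorises the kernel of $TT^*$ as $K(\omega,\omega') = I_1(|\omega_1|,|\omega_1'|)\, I_2(|\omega_2|,|\omega_2'|)$, where
\[
I_j(u,u') = 2\int_{1/\min(u,u')}^{\infty} (1+r)^{-2\gamma_j}\cos\bigl(r(u-u')\bigr)\, dr, \qquad (\gamma_1,\gamma_2)=(\alpha,\beta).
\]
Applying Lemma~\ref{prop:OscillatoryIntegral} (after absorbing the discrepancy between $(1+r)^{-2\gamma_j}$ and $r^{-2\gamma_j}$ into a uniformly bounded remainder by an integration by parts) yields $|I_j(u,u')|\lesssim |u-u'|^{2\gamma_j-1}$ in the regime $2\gamma_j<1$; the complementary case $2\gamma_j\geq 1$ is handled by the simpler $O(1)$ bound. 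Since $\alpha+\beta<1$ forces $\min\{\alpha,\beta\}<1/2$, at least one oscillatory estimate is genuinely used, and by the symmetry $\alpha\leftrightarrow\beta$ I assume $\alpha=\min\{\alpha,\beta\}$.

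Since the phase of $\mathcal R^*_{\alpha,\beta}$ depends only on $(|\omega_1|,|\omega_2|)$, I restrict $\omega,\omega'$ to the first quadrant of $\sph{1}$ and parametrise by $\omega=(\cos\tau,\sin\tau)$ with $\tau\in[0,\pi/2]$; the involution $\tau\leftrightarrow\pi/2-\tau$, which exchanges the roles of $\alpha$ and $\beta$, allows me to restrict further to $\tau,\tau'\in[0,\pi/4]$. The identities $\cos\tau-\cos\tau'=-2\sin\frac{\tau+\tau'}{2}\sin\frac{\tau-\tau'}{2}$ and $\sin\tau-\sin\tau'=2\cos\frac{\tau+\tau'}{2}\sin\frac{\tau-\tau'}{2}$, combined with $\sin\frac{\tau+\tau'}{2}\simeq\max(\tau,\tau')$, $\cos\frac{\tau+\tau'}{2}\simeq 1$ on $[0,\pi/4]$, and the inequality $\max(\tau,\tau')^{2\alpha-1}\leq(\tau\tau')^{(2\alpha-1)/2}$ (valid since $2\alpha-1\leq 0$), yield
\[
|K(\tau,\tau')|\lesssim(\tau\tau')^{-a}|\tau-\tau'|^{-b},\qquad a=\tfrac{1-2\alpha}{2}\in(0,\tfrac12),\ \ b=2-2(\alpha+\beta)\in(0,1),
\]
on $[0,\pi/4]^2$. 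In the cross region $\tau\in[0,\pi/4],\tau'\in[\pi/4,\pi/2]$ (and its mirror) the trigonometric weights are bounded away from $0$ and $\infty$, so $|K(\tau,\tau')|\lesssim|\tau-\tau'|^{-b}$; the induced integral operator is of Hardy--Littlewood--Sobolev type and satisfies the required $L^{q'}\to L^q$ bound under the weaker condition $\alpha+\beta\geq 1-1/q$, which is implied by the hypothesis.

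The remaining diagonal block is a Stein--Weiss kernel with symmetric weights. Splitting $\int = \int_{\tau'<\tau}+\int_{\tau'>\tau}$ decomposes the operator into a Volterra piece of the form $\mathcal S_{a,b}$ (after absorbing the weight $\tau'^{-a}$ into the input via a H\"older-in-exponents argument) and its adjoint $\mathcal S^*_{a,b}$ from~\eqref{eq:AdjointS}. The scaling-critical exponent is $q_*=2/(2a+b)=2/(3-4\alpha-2\beta)$, i.e.\ $1/q_*=3/2-2\alpha-\beta$, and the hypothesis $\alpha+\beta+\min\{\alpha,\beta\}=2\alpha+\beta\geq 3/2-1/q$ reads exactly $1/q\geq 1/q_*$; Lemma~\ref{lem:MappingS} (or Stein--Weiss~\cite{SW58} applied directly) supplies the boundedness at $q_*$, and the claim at the desired $q$ follows by inclusion of Lebesgue spaces on the bounded interval $[0,\pi/4]$. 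The principal obstacle I anticipate is the symmetric-weight reduction just described: transferring the inner weight $\tau'^{-a}$ from the kernel into a shift of the source Lebesgue exponent requires a careful choice of intermediate exponents, since Lemma~\ref{lem:MappingS} only directly handles the outer Volterra weight $x^{-a}$.
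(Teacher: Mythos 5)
Your overall strategy — squaring out $\|\mathcal R^*_{\alpha,\beta}F\|_{L^2}^2$, factorising the kernel by Fubini, invoking Lemma~\ref{prop:OscillatoryIntegral} for each one-dimensional oscillatory integral, and then recognising a Stein--Weiss operator — is precisely the route the paper takes. Your case distinction, parametrisation by $\tau\in[0,\pi/2]$, involution $\tau\leftrightarrow\pi/2-\tau$, and product-formula manipulation of $\cos\tau-\cos\tau'$ and $\sin\tau-\sin\tau'$ all mirror the paper's \eqref{eq:CosSinEstimatesI}--\eqref{eq:CosSinEstimatesII}, so this is not a different approach.

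However, there is a genuine gap in your treatment of the case $\max\{\alpha,\beta\}\ge\tfrac12$, which is not excluded by the standing hypotheses (take e.g.\ $\alpha=0.2$, $\beta=0.7$). With your convention $\alpha=\min\{\alpha,\beta\}\le\beta$ and $\tau,\tau'\in[0,\pi/4]$, the $\beta$-factor $I_2$ lives on the \emph{small} variable $\sin\tau$, so the lower integration limit in Lemma~\ref{prop:OscillatoryIntegral} is $a\simeq 1/\min(\tau,\tau')$, which is unbounded. When $2\beta\geq 1$ the lemma gives $|I_2|\lesssim a^{1-2\beta}\simeq\min(\tau,\tau')^{2\beta-1}$, a \emph{dampening} factor vanishing at the corner — not merely an $O(1)$ bound, but strictly better. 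If you discard it and use $O(1)$, the kernel on $[0,\pi/4]^2$ is only controlled by $\max(\tau,\tau')^{2\alpha-1}|\tau-\tau'|^{2\alpha-1}$, which does \emph{not} satisfy your claimed bound $(\tau\tau')^{(2\alpha-1)/2}|\tau-\tau'|^{2(\alpha+\beta)-2}$: near $\tau\simeq\tau'$ the actual diagonal singularity is $|\tau-\tau'|^{2\alpha-1}$, strictly worse than $|\tau-\tau'|^{2(\alpha+\beta)-2}$ because $2\beta-1>0$. The resulting Stein--Weiss exponents are $(a,b)=\bigl((1-2\alpha)/2,\,1-2\alpha\bigr)$ with scaling $a+b/2=1-2\alpha$, and the condition $\tfrac1q\geq 1-2\alpha$ is \emph{not} implied by $2\alpha+\beta\geq\tfrac32-\tfrac1q$ when $\beta>\tfrac12$ (already the condition $\tfrac2q\ge a+b$ in Lemma~\ref{lem:MappingS} reduces to $\alpha+2\beta\leq\tfrac32$, which fails for the example above).

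To close the gap you must retain the dampening factor: with $|I_2|\lesssim\min(\tau,\tau')^{2\beta-1}$, one has $\max^{2\alpha-1}\min^{2\beta-1}=\max^{2\alpha-2\beta}(\tau\tau')^{2\beta-1}\le(\tau\tau')^{\alpha+\beta-1}$ by the AM--GM inequality applied to the (now nonpositive) exponent $2\alpha-2\beta$, giving the Stein--Weiss bound $|K|\lesssim(\tau\tau')^{\alpha+\beta-1}|\tau-\tau'|^{2\alpha-1}$, i.e.\ $(a,b)=(1-\alpha-\beta,\,1-2\alpha)$. This has the correct scaling $a+b/2=\tfrac32-2\alpha-\beta$ and satisfies the hypotheses of Lemma~\ref{lem:MappingS} under the assumption $\alpha+\beta+\min\{\alpha,\beta\}\geq\tfrac32-\tfrac1q$. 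This is in effect what the paper does in its omitted case $\beta<\tfrac12<\alpha$ on $I_3$. Note your cross-region analysis with the $O(1)$ bound does happen to survive (it only requires $\tfrac1q\ge\tfrac12-\alpha$, which \emph{is} implied), but the diagonal block does not. The issue is not cosmetic: the uniform kernel bound you state simply fails when $\max\{\alpha,\beta\}\geq\tfrac12$, and the two subcases require different Stein--Weiss exponents.
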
 
\noindent The main tool is the oscillatory integral estimate from Lemma \ref{prop:OscillatoryIntegral} 
which together with elementary considerations place us in the setting of the weighted
Hardy--Littlewood--Sobolev inequality, Lemma \ref{lem:MappingS}.
\begin{proof}[Proof of Proposition \ref{prop:GenSB}] 
By symmetry, we may assume that $\alpha\geq\beta$, and
by interpolation, that $\alpha,\beta\neq\frac{1}{2}$.
We can further assume that $F\in L^{q'}(\sph{1})$ is supported in the region 
$\{\omega\in\sph{1}:\omega_1,\omega_2\geq 0\}$, 
since the other contributions can be estimated in a similar way.
For such $F$, set $F_\star(\phi):=F(\cos\phi,\sin\phi)$, and compute:
\begin{align*}
\|&\mathcal R_{\alpha,\beta}^*(F)\|_{L^2(\R^2)}^2
=\int_0^{\frac{\pi}{2}}\int_0^{\frac{\pi}{2}} F_\star(\varphi) \overline{F_\star(\phi)} 
\left(\int_{\min\{\cos(\phi),\cos(\varphi)\} |x_1|\geq 1} (1+|x_1|)^{-2\alpha}  
e^{ix_1(\cos\phi-\cos\varphi)} \d x_1\right) \\
&\times \left(\int_{\min\{\sin(\phi),\sin(\varphi)\}|x_2|\geq 1} (1+|x_2|)^{-2\beta}
e^{ix_2(\sin\phi-\sin\varphi)} \d x_2\right) \d\phi\d\varphi \\
&= 2\sum_{j=1}^3 \int_{(\varphi,\phi)\in I_j} |F_\star(\varphi)|  |F_\star(\phi)|
\left|\int_{|x_1|\geq \cos(\varphi)^{-1}} (1+|x_1|)^{-2\alpha}  
e^{ix_1(\cos\phi-\cos\varphi)} \d x_1\right| \\
&\times \left|\int_{|x_2|\geq \sin(\phi)^{-1}} (1+|x_2|)^{-2\beta}
e^{ix_2(\sin\phi-\sin\varphi)} \d x_2\right| \d\phi\d\varphi,  
\end{align*}
where the regions $\{I_j\}_{j=1}^3\subset [0,\frac{\pi}2]^2$ are defined as follows:
\begin{align*}
  I_1 &:= \left\{0\leq \varphi\leq \frac{\pi}{4},0\leq \phi\leq \varphi\right\},\\
  I_2 &:= \left\{\frac{\pi}{4}\leq \varphi\leq \frac{\pi}{2},0\leq \phi\leq \frac{\pi}{8}\right\},\\
  I_3 &:= \left\{\frac{\pi}{4}\leq \varphi\leq \frac{\pi}{2},\frac{\pi}{8}\leq \phi\leq \varphi\right\}.
\end{align*}

All the resulting oscillatory integrals will be estimated via Lemma~\ref{prop:OscillatoryIntegral}.
The ones over the region $I_2$ are easy to handle since 
$|\cos\phi-\cos\varphi|,|\sin\phi-\sin\varphi|\geq c$, for some $c>0$ and every $(\varphi,\phi)\in I_2$. 
Therefore the contribution from $I_2$ is bounded
by a constant multiple of $\|F\|_{L^1(\sph{1})}^2=O(\|F\|_{L^{q'}(\sph{1})}^2)$. 
\medskip

To estimate the integrals over the regions
$I_1,I_3$, we make use of the following lower bounds, valid for some $c>0$ and every $0\leq \phi\leq \varphi\leq \frac{\pi}{2}$:  
 \begin{align} \label{eq:CosSinEstimatesI}
   \begin{aligned}
  |\cos\varphi-\cos\phi|
  &\geq \cos\left(\frac{\varphi+\phi}{2}\right)-\cos\varphi 
  \;\geq\;\frac{\varphi-\phi}{2} \sin\left(\frac{\varphi+\phi}{2}\right) \\
  &\geq c \frac{\varphi-\phi}{2} \frac{\varphi+\phi}{2} 
  \;\geq\; \frac{c}{4} (\varphi-\phi) \varphi, \\
  |\sin\varphi-\sin\phi|
  &\geq \frac{c}{4}(\varphi-\phi)\left(\frac{\pi}{2}-\phi\right).  
\end{aligned}
 \end{align}
Similarly, for some $C<\infty$ and  every $0\leq \phi\leq \varphi\leq \frac{\pi}{2}$, we have that: 
\begin{align} \label{eq:CosSinEstimatesII}
  |\cos\varphi-\cos\phi|
  \leq C(\varphi-\phi)\varphi, \qquad 
  |\sin\varphi-\sin\phi| \leq C(\varphi-\phi)\left(\frac{\pi}{2}-\phi\right).  
\end{align}

\medskip

\noindent 
\textit{Analysis on $I_1$:}
If $0<\alpha,\beta<\frac{1}{2}$, then
Lemma~\ref{prop:OscillatoryIntegral} and the bounds \eqref{eq:CosSinEstimatesI} together imply:
\begin{align}
  &\int_{(\varphi,\phi)\in I_1} |F_\star(\varphi)|  |F_\star(\phi)|
 \left|\int_{|x_1|\geq \cos(\varphi)^{-1}} (1+|x_1|)^{-2\alpha}  
e^{ix_1(\cos\phi-\cos\varphi)} \d x_1\right| \notag\\
&\times \left|\int_{|x_2|\geq \sin(\phi)^{-1}} (1+|x_2|)^{-2\beta}
e^{ix_2(\sin\phi-\sin\varphi)} \d x_2\right| \d\phi\d\varphi   \notag \\
&\les \int_{(\phi,\varphi)\in I_1} |F_\star(\varphi)|  |F_\star(\phi)|  
|\cos\phi-\cos\varphi|^{2\alpha-1} |\sin\phi-\sin\varphi|^{2\beta-1}\d\phi\d\varphi \notag\\
&\les \int_{(\phi,\varphi)\in I_1} |F_\star(\varphi)|  |F_\star(\phi)|  
|\varphi(\varphi-\phi)|^{2\alpha-1}
\left|\left(\frac{\pi}{2}-\phi\right)(\varphi-\phi)\right|^{2\beta-1}\d\phi\d\varphi\notag\\
&\les \int_0^{\frac{\pi}{4}}  |F_\star(\varphi)| \left( |\varphi|^{2\alpha-1} \int_0^\varphi |F_\star(\phi)|  
|\varphi-\phi|^{2\alpha+2\beta-2} \d\phi \right) \d\varphi,\label{eq:LastIntegral}
\end{align}
where we have used that $(\frac{\pi}2-\phi)^{2\beta-1}\lesssim1$ since $0\leq\phi\leq\frac{\pi}4$.
Setting $(a,b):=(1-2\alpha,2-2\alpha-2\beta)$, we have  $a\geq 0, 0<b<1$ and $a+b\leq\frac2{q}$. 
Therefore definition~\eqref{eq:defOpS}, H\"older's inequality, and Lemma~\ref{lem:MappingS} 
(which can be applied since $q\geq 2$ and thus $q\geq q'$)
together imply that \eqref{eq:LastIntegral} is bounded by 
\[
\int_0^{\frac{\pi}4} |F_\star(\varphi)| \mathcal S_{a,b}(|F_\star|)(\varphi)\d\varphi
\leq\|F_\star\|_{L^{q'}([0,\frac{\pi}4])}\|\mathcal S_{a,b}(|F_\star|)\|_{L^q([0,\frac{\pi}4])} 
\lesssim \|F_\star\|_{L^{q'}([0,\frac{\pi}4])}^2 \leq \|F\|_{L^{q'}(\sph{1})}^2.
\]
If $0<\beta<\frac{1}{2}<\alpha$, then
Lemma~\ref{prop:OscillatoryIntegral} and estimates \eqref{eq:CosSinEstimatesI}--\eqref{eq:CosSinEstimatesII} together imply:
\begin{align}
  &\int_{(\varphi,\phi)\in I_1} |F_\star(\varphi)|  |F_\star(\phi)|
 \left|\int_{|x_1|\geq \cos(\varphi)^{-1}} (1+|x_1|)^{-2\alpha}  
e^{ix_1(\cos\phi-\cos\varphi)} \d x_1\right| \notag\\
&\times \left|\int_{|x_2|\geq \sin(\phi)^{-1}} (1+|x_2|)^{-2\beta}
e^{ix_2(\sin\phi-\sin\varphi)} \d x_2\right| \d\phi\d\varphi    \notag\\
&\les \int_{(\phi,\varphi)\in I_1} |F_\star(\varphi)|  |F_\star(\phi)|  
|\cos\phi|^{2\alpha-1} |\sin\phi-\sin\varphi|^{2\beta-1}\d\phi\d\varphi \notag\\
&\les \int_{(\phi,\varphi)\in I_1} |F_\star(\varphi)|  |F_\star(\phi)|  
\left(\frac{\pi}{2}-\phi\right)^{2\alpha-1} \left|\left(\frac{\pi}{2}-\phi\right)(\varphi-\phi)\right|^{2\beta-1}\d\phi\d\varphi
\notag\\
&\les \int_0^{\frac{\pi}{4}}  |F_\star(\varphi)| \left(\int_0^\phi |F_\star(\varphi)|  
|\varphi-\phi|^{2\beta-1} \d\phi \right)\d\varphi.\label{eq:LastInt2} 
\end{align}
Setting $(a,b):=(0,1-2\beta)$, we have $a\geq 0, 0<b<1$, and $a+b\leq\frac2{q}$; indeed, setting $\gamma:=\alpha+\beta+\min\{\alpha,\beta\}$,
 it follows that
\[  \frac{2}{q}-a-b
  \geq 3-2\gamma - (1-2\beta)
  = 2 - 2(\alpha+\beta) \geq 0.\]
Lemma~\ref{lem:MappingS} again implies that the integral \eqref{eq:LastInt2} is $O(\|F\|_{L^{q'}(\sph{1})}^2)$.

\medskip

\noindent \textit{Analysis on $I_3$:}
If $0<\alpha,\beta<\frac{1}{2}$, then Lemma~\ref{prop:OscillatoryIntegral}, the bounds \eqref{eq:CosSinEstimatesI}, and the change of variables
$(\frac{\pi}2-\varphi,\phi)\rightsquigarrow(\varphi,\frac{\pi}2-\phi)$ together yield
\begin{align*} 
  &\int_{(\varphi,\phi)\in I_3} |F_\star(\varphi)|  |F_\star(\phi)| 
  \left|\int_{|x_1|\geq \cos(\varphi)^{-1}} (1+|x_1|)^{-2\alpha}  
e^{ix_1(\cos\phi-\cos\varphi)} \d x_1\right| \\
&\times \left|\int_{|x_2|\geq \sin(\phi)^{-1}} (1+|x_2|)^{-2\beta}
e^{ix_2(\sin\phi-\sin\varphi)} \d x_2\right| \d\phi\d\varphi    \\
 &\les  \int_{\frac{\pi}4}^{\frac{\pi}2}  |F_\star(\varphi)| 
\left(
\int_{\frac{\pi}8}^{\varphi}(\varphi-\phi)^{2\alpha+2\beta-2}
\left(\frac{\pi}2-\phi\right)^{2\beta-1}
|F_\star(\phi)|\d\phi\right)\d\varphi \\ 
&=\int_0^{\frac{\pi}4}  \left|F_\star\left(\frac{\pi}2-\varphi\right)\right| \left(
\int_{\varphi}^{\frac{\pi}2}(\varphi-\phi)^{2\alpha+2\beta-2}
\phi^{2\beta-1}\left|F_\star\left(\frac{\pi}2-\phi\right)\right|
\d\phi\right)\d\varphi\\
&=\int_0^{\frac{\pi}4} |\tilde F_\star(\varphi)| \mathcal S^*_{a,b}(|\tilde F_\star|)(\varphi)\d\varphi
\lesssim \|\tilde F_\star\|_{L^{q'}([0,\frac{\pi}4])}^2 \leq \|F\|_{L^{q'}(\sph{1})}^2,
\end{align*}
where $\tilde F_\star:=F_\star(\frac{\pi}2-\cdot)$. Here we used the $L^{q'}$--$L^q$
bound for the adjoint operator $\mathcal S_{a,b}^*$, recall \eqref{eq:AdjointS}, implied by Lemma~\ref{lem:MappingS} with
$(a,b):=(1-2\beta,2-2\alpha-2\beta)$. 
The analysis of the case $\beta<\frac{1}{2}<\alpha$ proceeds along similar lines, and is therefore
omitted.
This concludes the proof of the proposition.
\end{proof}
  
Next we extend the range of boundedness of $\mathcal R_{\alpha,\beta}^*$ given by Proposition \ref{prop:GenSB} via
interpolation with a trivial estimate for $\mathcal R_{0,0}^*$.

\begin{proposition}\label{prop:SteinInterpolation}
  Let ${1< p}\leq 2\leq q<\infty$ and  $\alpha,\beta>0$ be such that
  $\frac{1}{p'}<\alpha+\beta<\frac2{p'}$.
  Then $\mathcal R_{\alpha,\beta}^*:L^{q'}(\sph{1})\to L^{p'}(\R^2)$ is bounded if 
  $\alpha+\beta+\min\{\alpha,\beta\} \geq \frac{3}{p'}-\frac{1}{q}$. 
\end{proposition}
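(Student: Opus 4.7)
The plan is to apply Stein's complex interpolation theorem (see \cite[\S\S 5.4--5.5]{BL}) to an analytic family of operators whose two boundary values are controlled by Proposition \ref{prop:GenSB} and by a trivial pointwise estimate for $\mathcal R_{0,0}^\ast$. The key observation is that, upon rescaling the weight exponents by the factor $p'/2$, the hypotheses of Proposition \ref{prop:SteinInterpolation} are mapped exactly onto those of Proposition \ref{prop:GenSB}.

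Concretely, I would set $(A,B):=(\frac{p'\alpha}{2},\frac{p'\beta}{2})$, so that $A+B\in (\frac12,1)$ by the standing assumption on $\alpha+\beta$, and define, for $z$ in the closed strip $\{0\leq \mathrm{Re}(z)\leq 1\}$, the operator
\[
  T_z(F)(x) := e^{z^2} (1+|x_1|)^{-A(1-z)} (1+|x_2|)^{-B(1-z)} \int_{\sph 1} F(\omega)\, \mathbbm 1_{|\omega_1||x_1|\geq 1} \mathbbm 1_{|\omega_2||x_2|\geq 1}\, e^{i x\cdot (|\omega_1|,|\omega_2|)}\,\d\sigma(\omega).
\]
The prefactor $e^{z^2}$ decays like $e^{-|\mathrm{Im}(z)|^2}$ along vertical lines, so $\{T_z\}$ is an admissible analytic family in the sense of Stein. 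At the interior point $\theta^\ast := 1-\frac{2}{p'}\in [0,1)$, one has $A(1-\theta^\ast) = \alpha$ and $B(1-\theta^\ast) = \beta$, so $T_{\theta^\ast}$ coincides with $\mathcal R_{\alpha,\beta}^\ast$ up to a harmless constant factor.

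On the line $\mathrm{Re}(z) = 0$, the weights reduce in modulus to $(1+|x_1|)^{-A}(1+|x_2|)^{-B}$, and I would apply Proposition \ref{prop:GenSB} with index $q_0 := \frac{2q}{p'}\geq 2$: the condition $A+B+\min\{A,B\}\geq \frac{3}{2}-\frac{1}{q_0}$ reduces, upon multiplication by $\frac{2}{p'}$, exactly to $\alpha+\beta+\min\{\alpha,\beta\}\geq \frac{3}{p'}-\frac{1}{q}$, yielding $\|T_{ib}(F)\|_{L^2(\R^2)}\lesssim \|F\|_{L^{q_0'}(\sph 1)}$ uniformly in $b\in\R$. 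On the line $\mathrm{Re}(z) = 1$, the weights are unimodular, and the triangle inequality together with the finiteness of $\sigma(\sph 1)$ gives $\|T_{1+ib}(F)\|_{L^\infty(\R^2)}\lesssim \|F\|_{L^1(\sph 1)}$ uniformly in $b\in\R$. Stein's theorem applied at $\theta = \theta^\ast$ then produces an $L^{(q_{\theta^\ast})'}$-to-$L^{(p_{\theta^\ast})'}$ bound for $T_{\theta^\ast}$; a direct computation based on $1-\theta^\ast = 2/p'$ and $q_0 = 2q/p'$ identifies $(p_{\theta^\ast})' = p'$ and $(q_{\theta^\ast})' = q'$, which is exactly the claimed estimate.

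The substantive analytic work has already been carried out in Proposition \ref{prop:GenSB}, so the main obstacle here is really one of book-keeping: verifying the admissibility hypothesis of Stein's theorem for $\{T_z\}$ (straightforward thanks to the Gaussian prefactor, the analytic dependence of the weights on $z$, and the compactness of $\sph{1}$) and tracking the interpolated Lebesgue exponents through the $p'/2$ rescaling. One should also note that the choice $q_0 = 2q/p'$ requires $q\geq p'$, which is the natural range covered by this interpolation scheme.
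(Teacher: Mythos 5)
Your proposal follows essentially the same strategy as the paper's: Stein complex interpolation of an analytic family of operators (the substitution $s = 1-z$ between your $T_z$ and the paper's $\mathcal E_s$ is cosmetic), with the rescaled Proposition~\ref{prop:GenSB} estimate controlling one boundary line of the strip and a trivial $L^1\to L^\infty$ bound the other, recovering $\mathcal R_{\alpha,\beta}^\ast$ at the interior point $\Re(z) = 1-2/p'$. Your Gaussian prefactor $e^{z^2}$ is a good explicit device for admissibility (the paper leaves this implicit), and your exponent bookkeeping at $\theta^\ast$ is arithmetically correct.

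The one material difference, and a genuine gap, is that you hard-code the interpolation source exponents: $q_0 = 2q/p'$ on the Proposition~\ref{prop:GenSB} side and $L^1$ on the trivial side. As you observed, $q_0 = 2q/p'$ is only $\geq 2$ when $q\geq p'$, and Proposition~\ref{prop:GenSB} requires its source-side exponent to be in $[2,\infty)$. But the statement of Proposition~\ref{prop:SteinInterpolation} assumes only $1<p\leq 2\leq q<\infty$, and whenever $p<2$ (so $p'>2$) the range $2\leq q<p'$ is nonempty and is part of the claim. This is not an idle generality: Corollary~\ref{cor:WeightedCor2} applies the dual form with $q=2$ and $p = \frac{2(d+m)}{d+m+2}<2$, hence $p'>2=q$, so the main application of the proposition lies squarely outside the range your fixed choice covers. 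The fix is the one the paper adopts: leave both source exponents as free parameters, with $\frac{1}{q_1'}$ ranging over the full interval $[\tfrac12, \tfrac{p'}{2}\gamma - \tfrac12]$ allowed by Proposition~\ref{prop:GenSB} (rather than being pinned to the right endpoint, which is what your choice $q_0=2q/p'$ amounts to) and $\frac{1}{q_0'}$ over $[0,1]$ on the trivial side, and then solve the interpolation identity $\frac{1}{q'} = (1-\theta)\frac{1}{q_0'} + \theta\frac{1}{q_1'}$ at $\theta = 2/p'$. Your derivation is correct on its restricted range but would not suffice to prove Theorem~\ref{thm:main}.
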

\begin{proof}
Set $\gamma:=\alpha+\beta+\min\{\alpha,\beta\}$. {We use complex interpolation for} the analytic family of
operators given by
  \begin{equation}\label{eq:analfamop}
  \mathcal E_s := \mathcal R_{\frac{p'\alpha s}2, \frac{p'\beta s}2}^*,
  \end{equation} where $s\in S:=\{z\in\Co: 0\leq\Re(z)\leq 1\}$. 
 Start by noting that,  given simple functions $F\in L^1(\sph{1})$ and $g\in L^1(\R^2)$, the map
 \begin{equation}\label{eq:SteinMap}
 s\mapsto \int_{\R^2} \mathcal E_s(F)(x) g(x) \d x
 \end{equation} 
 is analytic in the interior of the strip $S$, continuous on $S$, and moreover the function defined by \eqref{eq:SteinMap} is uniformly bounded
 above in $S$.
 \medskip
 
 Now, Proposition~\ref{prop:GenSB}, with $(\alpha,\beta)$ replaced by $(p'\alpha/2,p'\beta/2)$, yields
   \begin{equation}\label{eq:IntEndPt1}
   \|\mathcal E_s(F)\|_{L^2(\R^2)} \lesssim\|F\|_{L^{q_1'}(\sph{1})}
   \text{ for } \Re(s)=1, \;\text{if } \frac12\leq \frac{1}{q_1'}\leq \frac{p'}{2}\gamma-\frac{1}{2}.
   \end{equation}
 On the other hand, we have the following trivial estimate:
      \begin{equation}\label{eq:IntEndPt2}
      \|\mathcal E_s(F)\|_{L^\infty(\R^2)} \lesssim\|F\|_{L^{q_0'}(\sph{1})}
   \text{ for } \Re(s)=0, \;\text{if } 0\leq \frac{1}{q_0'}\leq 1.
   \end{equation}
 Since $q\geq p$ and $\gamma\geq \frac{3}{p'}-\frac{1}{q}$, we may choose $q_0,q_1$ satisfying the above conditions, 
 with the additional property that $\frac{1}{q}=\frac{1-\theta}{q_0}+\frac{\theta}{q_1}$ for $\theta:=\frac{2}{p'}$. 
 Then Stein's interpolation theorem \cite[p.~205]{SW} can be applied to the analytic family of operators $\{\mathcal E_s\}_{s\in S}$ given by \eqref{eq:analfamop}, 
resulting from  \eqref{eq:IntEndPt1}--\eqref{eq:IntEndPt2} that
 \[
   \|\mathcal E_s(F)\|_{L^{p'}(\R^2)} \lesssim\|F\|_{L^{q'}(\sph{1})},
   \text{ for } \Re(s)=\theta=\frac{2}{p'}.
 \]
 This amounts to the desired conclusion since $\theta=\frac{2}{p'}$ implies $\mathcal E_\theta= \mathcal R_{\alpha,\beta}^*$. 
\end{proof}

Proposition \ref{prop:SteinInterpolation} will later on be refined via interpolation with a non-trivial estimate for the operator $\mathcal R_{0,0}^*$.
Since $\mathcal R_{0,0}^*$ is similar but not identical to the two-dimensional extension operator on
the unit circle, we first need to prove the latter estimate.  
That is the content of our next result.

\begin{proposition}\label{prop:ZygmundRange}
Let $1\leq p<\frac43$ and $1\leq q\leq \frac{p'}3$.
Then $\mathcal R_{0,0}^*:L^{q'}(\sph{1})\to L^{p'}(\R^2)$ defines a bounded operator.
\end{proposition}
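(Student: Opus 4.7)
The plan is to reduce the boundedness of $\mathcal R_{0,0}^*$ to the classical 2D Fourier restriction theorem on $\sph{1}$---due to Fefferman~\cite{Fef} for the non-endpoint case $p'>3q$ and Zygmund~\cite{Zy74} for the endpoint $p'=3q$---which asserts that the standard extension operator $F\mapsto\widehat{F\sigma}$ maps $L^{q'}(\sph{1})\to L^{p'}(\R^2)$ throughout the full range $1\leq p<\frac{4}{3}$, $1\leq q\leq \frac{p'}{3}$.

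The operator $\mathcal R_{0,0}^*$ differs from the standard extension operator on $\sph{1}$ in two respects: the phase is $x\cdot(|\omega_1|,|\omega_2|)$ rather than $x\cdot\omega$, and an $x$-dependent cutoff $\chi_x(\omega):=\mathbbm{1}_{|\omega_1||x_1|\geq 1}\mathbbm{1}_{|\omega_2||x_2|\geq 1}$ is present. To handle the phase, I would decompose $\sph{1}$ into the four arcs $S^{\epsilon_1\epsilon_2}:=\{\omega\in\sph{1}:\epsilon_1\omega_1\geq 0,\ \epsilon_2\omega_2\geq 0\}$ for $\epsilon_i\in\{-1,+1\}$. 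On $S^{\epsilon_1\epsilon_2}$ one has $x\cdot(|\omega_1|,|\omega_2|)=\omega\cdot R_\epsilon(x)$, where $R_\epsilon(x_1,x_2):=(\epsilon_1 x_1,\epsilon_2 x_2)$ is an $L^{p'}(\R^2)$-isometry that preserves the cutoff $\chi_x$. The change of variables $x\leftrightarrow R_\epsilon(x)$ then reduces the claim to bounding, for each fixed $\epsilon$, the operator
\[
T_\epsilon F(x):=\int_{S^{\epsilon_1\epsilon_2}} F(\omega)\,\chi_x(\omega)\, e^{ix\cdot\omega}\, d\sigma(\omega)
\]
from $L^{q'}(\sph{1})$ into $L^{p'}(\R^2)$.

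Writing $\chi_x=1-(1-\chi_x)$ decomposes $T_\epsilon F$ into a main part and an error part. The main part coincides with the standard adjoint restriction operator applied to $F\mathbbm{1}_{S^{\epsilon_1\epsilon_2}}$, whose $L^{q'}\to L^{p'}$ boundedness is precisely the content of the 2D restriction theorem of Fefferman--Zygmund in the stated range. For the error, the identity $1-\chi_x=\mathbbm{1}_{\omega_1<1/|x_1|}+\mathbbm{1}_{\omega_2<1/|x_2|}-\mathbbm{1}_{\omega_1<1/|x_1|}\mathbbm{1}_{\omega_2<1/|x_2|}$ combined with the symmetry exchanging the two coordinates reduces matters to the single bound
\[
\Bigl\|\int_{S^{++}} F(\omega)\,\mathbbm{1}_{\omega_1<1/|x_1|}\, e^{ix\cdot\omega}\, d\sigma(\omega)\Bigr\|_{L^{p'}(\R^2)}\lesssim\|F\|_{L^{q'}(\sph{1})}.
\]

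The main obstacle is this last bound, where the integration is concentrated in a small arc near the pole $(0,1)\in\sph{1}$ in which the circle is essentially flat and the usual curvature-based extension estimates no longer apply. Parametrizing $\omega=(\tau,\sqrt{1-\tau^2})$ with $\tau\in[0,\min\{1,1/|x_1|\}]$ and pulling out the factor $e^{ix_2}$, the residual quadratic phase $e^{ix_2(\sqrt{1-\tau^2}-1)}$ has magnitude $O(1)$ precisely when $|x_2|\lesssim|x_1|^2$; in that regime the integral reduces to a truncated one-dimensional Fourier transform controllable via the Hausdorff--Young inequality (or its weighted variant, Lemma~\ref{lem:weightedHY}). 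In the complementary regime $|x_2|\gg|x_1|^2$, stationary phase around the critical point $\tau^\ast=x_1/x_2$ supplies the required decay. Summing the contributions over dyadic shells in $(|x_1|,|x_2|)$ and recombining with the main part yields the claimed bound throughout the Fefferman--Zygmund range.
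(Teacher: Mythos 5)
Your outline is structurally parallel to the paper's---both split the operator into a ``main'' part handled by the classical Carleson--Sj\"olin/Zygmund restriction theorem on $\sph{1}$ and an ``error'' part---but the two arguments diverge completely in how that error is produced and estimated, and your error term is substantially harder than the paper's. The paper works with $\mathcal R_{0,0}$ directly (not the adjoint) and exploits a geometric observation you do not use: the cutoff conditions $|\omega_1||x_1|\geq 1$, $|\omega_2||x_2|\geq 1$ automatically force $x_1^{-2}+x_2^{-2}\leq \omega_1^2+\omega_2^2=1$, so $f$ can be replaced by $g:=f\,\mathbbm{1}_{x_1^{-2}+x_2^{-2}\leq 1}$ and the constrained double integral rewritten as the \emph{full} Fourier transform of $g$ (handled by Carleson--Sj\"olin/Zygmund) minus a one-dimensional boundary integral $\int_1^{1/|\omega_1|}\widehat{g_{x_1}}(|\omega_2|)e^{-ix_1|\omega_1|}\d x_1$. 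That remainder is then estimated purely by duality, Fubini, H\"older, Hausdorff--Young, and H\"older in Lorentz spaces---no stationary phase, no dyadic decomposition, no pointwise asymptotics. By contrast, your decomposition keeps the cutoff on the $\omega$-domain, producing an oscillatory integral over the flat arc $\{\omega_1<1/|x_1|\}$ near the pole, which is precisely the regime where curvature is unavailable and where restriction estimates are delicate.

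Your treatment of that error piece has genuine gaps. First, in the regime $|x_2|\lesssim|x_1|^2$ the claim that the integral ``reduces to a truncated one-dimensional Fourier transform'' is not literally true: after pulling out $e^{ix_2}$ there remains the modulating factor $e^{ix_2(\sqrt{1-\tau^2}-1)}$, which is bounded but not constant on $[0,1/|x_1|]$. Removing it requires an integration-by-parts/BV argument, which converts the expression into a \emph{maximal} truncated Fourier transform; bounding that in $L^{p'}$ is not a consequence of Hausdorff--Young and requires Christ--Kiselev or Menshov--Paley--Zygmund type machinery, none of which you invoke. Second, stationary phase in the regime $|x_2|\gg|x_1|^2$ gives pointwise decay for smooth amplitudes, but here the amplitude is an arbitrary $L^{q'}$ function $F$; you would need a uniform oscillatory-integral estimate for rough densities, which is nontrivial. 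Third, the claimed reduction of the cross term $\mathbbm{1}_{\omega_1<1/|x_1|}\mathbbm{1}_{\omega_2<1/|x_2|}$ to the single-cutoff bound via coordinate symmetry does not actually work by triangle inequality (the domain is smaller, but the operator is not pointwise dominated); it needs its own argument. Finally, the ``summing over dyadic shells'' step, which is where the Fefferman--Zygmund numerology must emerge, is asserted rather than carried out. So while the regime split $|x_2|\lessgtr|x_1|^2$ is the right heuristic, the proposal as written does not constitute a proof, and the route you chose entails significantly more machinery than the paper's one-dimensional reduction.
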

\begin{proof}
We may assume $p>1$, and will bound
 $\mathcal R_{0,0}:L^p(\R^2)\to L^q(\sph{1})$ instead. Given $f:\R^2\to\Co$, set 
$g_{x_1}(x_2)  := g(x_1,x_2)  := f(x_1,x_2) {\mathbbm{1}_{x_1^{-2}+x_2^{-2}\leq 1}}.$
Then
\begin{align*}
  \mathcal R_{0,0}(f)(\omega)
  &= \int_{\frac{1}{|\omega_1|}}^\infty  \int_{\frac{1}{|\omega_2|}}^\infty f(x_1,x_2)
  e^{-i(x_1|\omega_1|+x_2|\omega_2|)} \d x_1\d x_2  \\
  &= \int_{\frac{1}{|\omega_1|}}^\infty  \int_{\R} g(x_1,x_2) e^{-i(x_1|\omega_1|+x_2|\omega_2|)}  \d x_1\d x_2 \\
  &= \int_{\R^2} g(x) e^{-ix\cdot(|\omega_1|,|\omega_2|)}  \d x
    - \int_1^{\frac{1}{|\omega_1|}} \widehat{g_{x_1}}(|\omega_2|) e^{-ix_1|\omega_1|}\d x_1.
\end{align*}
Within the desired range of exponents, bounds for the first term  are well-known \cite{CS72, Zy74}, so we proceed to bound the second term in $L^q(\sph{1})$:
\begin{align}
\Big( \int_{\sph{1}} \left(\int_1^{\frac1{|\omega_1|}}
|\widehat{g_{x_1}}(|\omega_2|)| \d x_1\right)^q&\d\sigma(\omega)\Big)^{\frac{1}{q}} 
= \Big(\int_0^1 (1-r^2)^{-\frac12} \left(\int_1^{\frac1{\sqrt{1-r^2}}}
|\widehat{g_{x_1}}(r)| \d x_1\right)^q\d r \Big)^{\frac{1}{q}} \notag\\
&= \sup_{\|h\|_{L^{q'}}=1} 
\int_0^1 (1-r^2)^{-\frac1{2q}}  \left(\int_1^{\frac1{\sqrt{1-r^2}}}
|\widehat{g_{x_1}}(r)| \d x_1 \right) h(r) \d r \notag\\
&= \sup_{\|h\|_{L^{q'}}=1}  
 \int_1^\infty 
\left(\int_{\sqrt{1-x_1^{-2}}}^1 (1-r^2)^{-\frac1{2q}} |\widehat{g_{x_1}}(r)| h(r) \d r \right) \d x_1 \notag\\ 
&\leq   
 \int_1^\infty 
\left(\int_{\sqrt{1-x_1^{-2}}}^1 (1-r^2)^{-\frac{p'}{2(p'-q)}}  \d r \right)^{\frac{1}{q}-\frac{1}{p'}} 
  \|\widehat{g_{x_1}}\|_{L^{p'}}  \d x_1. \label{leq:LastTerm} 
  \end{align}
  Here we changed variables, used duality, Fubini's Theorem, and H\"older's inequality.
Another change of variables, the Hausdorff--Young inequality, and H\"older's inequality in Lorentz space \cite[Theorem 3.4]{ON63} together yield the following upper bound for \eqref{leq:LastTerm}:
\begin{align*}
 \int_1^\infty 
&\left(\int_0^{1-\sqrt{1-x_1^{-2}}} s^{-\frac{p'}{2(p'-q)}}  \d s \right)^{\frac{1}{q}-\frac{1}{p'}} 
  \|g_{x_1}\|_{L^p}\d x_1 \\
&\les  
 \int_1^\infty \left[\left(1-\sqrt{1-x_1^{-2}}\right)^{1-\frac{p'}{2(p'-q)}} \right]^{\frac{1}{q}-\frac{1}{p'}} 
  \|g_{x_1}\|_{L^p}\d x_1 \\
&\les  
 \int_1^\infty (x_1^{-2})^{\frac{1}{q}-\frac{1}{p'}-\frac{1}{2q}}   \|g_{x_1}\|_{L^p}\d x_1  \\
&\leq \| (\cdot)^{-\frac{1}{q}+\frac{2}{p'}} \|_{L^{p',\infty}([1,\infty))}
\|g\|_{L^{p,1}(\R^2)}. 
\end{align*}
The first term on the right-hand side is finite  since $q\leq\frac{p'}3$. 
Since $|g|\leq|f|$,
this establishes the $L^{p,1}(\R^2)$--$L^q(\sph{1})$ boundedness of $\mathcal R_{0,0}$, provided $1\leq p<\frac{4}{3}$ and $1\leq q\leq \frac{p'}{3}$. Real interpolation \cite[Theorem 5.3.2]{BL} within this family of Lorentz space estimates
 and compactness of $\sph{1}$ together yield the claimed strong type estimates.
This concludes the proof of the proposition.
\end{proof}


\section{Proof of Theorem \ref{thm:main}}\label{sec:ProofThm1}

In this section, we prove Theorem \ref{thm:main}.
After some preliminary simplifications, we reduce the analysis to three main estimates, which we address separately.

\medskip

 Fix $d\geq 4$ and $k\in\{2,3,\ldots, d-2\}$, and set $m:=\min\{k,d-k\}$.
By a routine density argument, it suffices to establish the estimate 
\begin{equation}\label{eq:GoalThm1}
\|\widehat{f}\|_{L^2(\sph{d-1})}\leq C(k,d,p) \|f\|_{L^{p}(\R^d)},
\qquad 1\leq p\leq \frac{2(d+m)}{d+m+2},
\end{equation}
for every $G_k$-symmetric Schwartz function $f:\R^d\to\Co$.
Our starting point is the following formula from Lemma \ref{lem:GkSymFT}:
\begin{equation}\label{eq:GkFT}
\widehat{f}(\eta,\zeta)=(2\pi)^{\frac d2} |\eta|^{\frac{2-d+k}{2}} |\zeta|^{\frac{2-k}2} \int_0^\infty \int_0^\infty \rho_1^{\frac{d-k}2}\rho_2^{\frac k2}  f_0(\rho_1,\rho_2) J_{\frac{d-k-2}{2}}(\rho_1|\eta|)J_{\frac{k-2}2}(\rho_2|\zeta|)\d\rho_1\d\rho_2,
\end{equation}
which holds  at every $(\eta,\zeta)\in\R^{d-k}\times\R^k$, for any $G_k$-symmetric Schwartz function $f:\R^d\to\Co$.
In light of Lemma \ref{lem:HardBessel}, there exist nonzero constants $A_1,A_2\in\Co\setminus\{0\}$ and
functions $R_1,R_2:(0,\infty)\to\Co$, such that, for every $r\geq 0$,
\begin{align}
J_{\frac{d-k-2}{2}}(r)&=(A_1 e^{ir}+\overline{A_1} e^{-ir})r^{-\frac12}\mathbbm{1}_{[1,\infty)}(r)+R_1(r),\label{eq:Dec1}\\
J_{\frac{k-2}2}(r)&=(A_2 e^{ir}+\overline{A_2} e^{-ir})r^{-\frac12}\mathbbm{1}_{[1,\infty)}(r)+R_2(r).\label{eq:Dec2}
\end{align}
Moreover, the following estimates hold, for every $r\geq 0$:
\begin{align}
|R_1(r)|&\lesssim r^{\frac{d-k-2}{2}}(1+r)^{\frac{k-d-1}{2}},\label{eq:EstR1}\\
|R_2(r)|&\lesssim  r^{\frac{k-2}2}(1+r)^{-\frac{k+1}2}.\label{eq:EstR2}
\end{align}
The decomposition \eqref{eq:Dec1}--\eqref{eq:Dec2} induces a splitting in \eqref{eq:GkFT},
\begin{equation}\label{eq:splitting}
\widehat{f}=(2\pi)^{\frac d2}\left(\widehat{f}_1+\sum_{j=2}^5 (\widehat{f}_j+\overline{\widehat{f}_j})\right),
\end{equation}
where each of the pieces is defined as follows:
\begin{align}
\widehat{f_1}(\eta,\zeta)&:=|\eta|^{\frac{k-d+2}{2}} |\zeta|^{\frac{2-k}2} \int_0^\infty \int_0^\infty \rho_1^{\frac{d-k}2}\rho_2^{\frac k2}  f_0(\rho_1,\rho_2)R_1(\rho_1|\eta|)R_2(\rho_2|\zeta|)\d\rho_2\d\rho_1;\label{eq:defF1}\\
\widehat{f_2}(\eta,\zeta)&:=A_2|\eta|^{\frac{k-d+2}{2}} |\zeta|^{\frac{1-k}2} \int_0^\infty \int_{\frac1{|\zeta|}}^\infty \rho_1^{\frac{d-k}2}\rho_2^{\frac {k-1}2}  f_0(\rho_1,\rho_2)R_1(\rho_1|\eta|)e^{i\rho_2|\zeta|}\d\rho_2\d\rho_1;\label{eq:defF2}\\
\widehat{f_3}(\eta,\zeta)&:=A_1|\eta|^{\frac{k-d+1}{2}} |\zeta|^{\frac{2-k}2} \int_{\frac1{|\eta|}}^\infty \int_0^\infty \rho_1^{\frac{d-k-1}2}\rho_2^{\frac k2}  f_0(\rho_1,\rho_2)e^{i\rho_1|\eta|}R_2(\rho_2|\zeta|)\d\rho_2\d\rho_1;\notag\\
\widehat{f_4}(\eta,\zeta)&:=A_1A_2|\eta|^{\frac{k-d+1}{2}} |\zeta|^{\frac{1-k}2} \int_{\frac1{|\eta|}}^\infty \int_{\frac1{|\zeta|}}^\infty \rho_1^{\frac{d-k-1}2}\rho_2^{\frac {k-1}2}  f_0(\rho_1,\rho_2)e^{i(\rho_1|\eta|+\rho_2|\zeta|)}\d\rho_2\d\rho_1;\label{eq:defF4}\\
\widehat{f_5}(\eta,\zeta)&:=A_1\overline{A_2}|\eta|^{\frac{k-d+1}{2}} |\zeta|^{\frac{1-k}2} \int_{\frac1{|\eta|}}^\infty \int_{\frac1{|\zeta|}}^\infty \rho_1^{\frac{d-k-1}2}\rho_2^{\frac {k-1}2}  f_0(\rho_1,\rho_2)e^{i(\rho_1|\eta|-\rho_2|\zeta|)}\d\rho_2\d\rho_1.\label{eq:defF5}
\end{align}
Note that each $\widehat{f_j}$ is $G_k$-symmetric.
We proceed to find suitable bounds for $\|\widehat{f_j}\|_{L^2(\sph{d-1})}$ for each $1\leq j\leq 5$.
By interchanging $d-k$ and $k$,
the estimates for $\widehat{f_3}, \widehat{f_5}$ are  analogous to those for $\widehat{f_2}, \widehat{f_4}$, respectively, 
and so the analysis actually reduces to three cases.

\medskip

Recall that, for a given $G_k$-symmetric function $f:\sph{d-1}\to\Co$, we set $f_0(|\eta|,|\zeta|)=f(\eta,\zeta)$, and have, for some dimensional constant $c_d\in (0,\infty)$ whose exact value will be unimportant,
\begin{equation}\label{eq:Norm}
  \|f\|_{L^p(\R^d)}^p = c_d^p \int_0^\infty\int_0^\infty \rho_1^{d-k-1}\rho_2^{k-1}
  |f_0(\rho_1,\rho_2)|^p\d \rho_1\d \rho_2. 
\end{equation}

\subsection{Estimating $\widehat{f}_1$}
This is by far the easiest case to handle.
\begin{proposition}\label{prop:f1}
For every $1\leq p<2$, there exists $C=C(k,d,p)<\infty$ such that 
\[\|\widehat{f_1}\|_{L^2(\sph{d-1})}\leq C\|f\|_{L^p(\R^d)},\]
for every $G_k$-symmetric Schwartz function $f:\R^d\to\Co$.
\end{proposition}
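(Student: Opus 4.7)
The plan is to reduce matters to an elementary calculation: since $\widehat{f_1}$ arises from pairing the two non-oscillatory Bessel remainders $R_1$ and $R_2$, no stationary-phase argument, interpolation, or weighted Hardy--Littlewood--Sobolev bound will be required.

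First, I would insert the pointwise estimates \eqref{eq:EstR1}--\eqref{eq:EstR2} directly into the definition \eqref{eq:defF1}. The key algebraic observation is that the prefactors $|\eta|^{(k-d+2)/2}$ and $|\zeta|^{(2-k)/2}$ cancel exactly against the leading low-frequency powers $(\rho_1|\eta|)^{(d-k-2)/2}$ and $(\rho_2|\zeta|)^{(k-2)/2}$ produced by $R_1$ and $R_2$, so all singular factors in $|\eta|, |\zeta|$ disappear and one arrives at the pointwise bound
\[
|\widehat{f_1}(\eta,\zeta)| \lesssim \int_0^\infty\!\int_0^\infty \rho_1^{d-k-1}\rho_2^{k-1} |f_0(\rho_1,\rho_2)| (1+\rho_1|\eta|)^{-\frac{d-k+1}{2}}(1+\rho_2|\zeta|)^{-\frac{k+1}{2}} \d\rho_1\d\rho_2,
\]
whose integrand is free of oscillation.

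Next, I would apply H\"older's inequality with exponents $p$ and $p'$ to the Lebesgue measure $\d\rho_1\d\rho_2$, distributing the weight $\rho_1^{d-k-1}\rho_2^{k-1}$ symmetrically so that the $L^p$ factor matches $\|f\|_{L^p(\R^d)}$ through \eqref{eq:Norm}. The complementary $L^{p'}$ factor is a weighted double integral which factorizes under the change of variables $u_i = \rho_i|\omega_i|$, where $\omega_1=\eta$ and $\omega_2=\zeta$; each of the resulting one-dimensional integrals converges because $p' > 2 > \max\{\frac{2(d-k)}{d-k+1},\frac{2k}{k+1}\}$ whenever $p<2$. This delivers the pointwise estimate
\[
|\widehat{f_1}(\eta,\zeta)| \lesssim |\eta|^{-(d-k)/p'}|\zeta|^{-k/p'}\|f\|_{L^p(\R^d)},
\]
to be read as $|\widehat{f_1}|\lesssim \|f\|_{L^1}$ in the limiting case $p=1$, $p'=\infty$.

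Finally, I would square this pointwise bound, integrate over $\sph{d-1}$, and invoke the slice integration formula from Lemma~\ref{lem:SphInt}. This reduces the square of $\|\widehat{f_1}\|_{L^2(\sph{d-1})}/\|f\|_{L^p(\R^d)}$ to a one-dimensional integral of the form
\[
\int_0^1 r^{(d-k)(1-2/p')-1}(1-r^2)^{(k-2)/2-k/p'}\d r.
\]
Inspecting the behaviour at $r=0$ and $r=1$, both endpoints converge precisely under the condition $p'>2$, that is, $p<2$, which is exactly the claimed range. The main (indeed essentially only) obstacle is bookkeeping the exponents through the application of H\"older and the change of variables; this purely polynomial calculation is what makes $\widehat{f_1}$ the simplest of the five pieces in \eqref{eq:splitting} to treat.
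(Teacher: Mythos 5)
Your proposal is correct and follows essentially the same route as the paper: insert the Bessel remainder bounds \eqref{eq:EstR1}--\eqref{eq:EstR2} into \eqref{eq:defF1} (noting the cancellation of the $|\eta|,|\zeta|$ prefactors), apply H\"older in the weighted measure $\rho_1^{d-k-1}\rho_2^{k-1}\d\rho_1\d\rho_2$ to extract $\|f\|_{L^p(\R^d)}$ via \eqref{eq:Norm}, rescale to obtain $|\widehat{f_1}(\eta,\zeta)|\lesssim |\eta|^{(k-d)/p'}|\zeta|^{-k/p'}\|f\|_{L^p}$, and conclude by slice integration (Lemma~\ref{lem:SphInt}), with $p<2$ ensuring integrability of $r^{(d-k)(1-2/p')-1}(1-r^2)^{k(1/2-1/p')-1}$ at both endpoints. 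The exponent bookkeeping and the final one-dimensional integral match the paper exactly.
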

\begin{proof}
Fix $p\in[1,2)$. From definition \eqref{eq:defF1} of $\widehat{f_1}$ and estimates \eqref{eq:EstR1}--\eqref{eq:EstR2}, it follows that
\begin{align*}
|\widehat{f_1}(\eta,\zeta)|
&\lesssim \int_0^\infty \int_0^\infty \rho_1^{d-k-1}\rho_2^{k-1}  |f_0(\rho_1,\rho_2)|(1+\rho_1|\eta|)^{\frac{k-d-1}2}(1+\rho_2|\zeta|)^{-\frac{k+1}2}\d\rho_2\d\rho_1\\
&\leq\left(\int_0^\infty \int_0^\infty \rho_1^{d-k-1}\rho_2^{k-1} (1+\rho_1|\eta|)^{\frac{p'(k-d-1)}2}(1+\rho_2|\zeta|)^{-\frac{p'(k+1)}2}\d\rho_2\d\rho_1\right)^{\frac1{p'}} \|f\|_{L^p(\R^d)},
\end{align*}
where the last line follows from an application of H\"older's inequality and~\eqref{eq:Norm}.
The change of variables $(\rho_1|\eta|,\rho_2|\zeta|)\rightsquigarrow(\rho_1,\rho_2)$ then reveals that 
\begin{equation}\label{eq:LaterAlso}
|\widehat{f_1}(\eta,\zeta)|\lesssim |\eta|^{\frac{k-d}{p'}} |\zeta|^{-\frac{k}{p'}} \|f\|_{L^p(\R^d)},
\end{equation}
for every $(\eta,\zeta)\in\R^{d-k}\times\R^k$.
The latter estimate can be integrated over the unit sphere $|\eta|^2+|\zeta|^2=1$ via Lemma \ref{lem:SphInt}, yielding:
\begin{align*}
\int_{\sph{d-1}} |\widehat{f_1}(\eta,\zeta)|^2\d\sigma(\eta,\zeta)
&\lesssim \int_0^1 r^{d-k-1}(1-r^2)^{\frac{k-2}2} \left(r^{\frac{k-d}{p'}}(1-r^2)^{-\frac{k}{2p'}} \|f\|_{L^p(\R^d)}\right)^2 \d r\\
&=\left(\int_0^1 r^{2(d-k)(\frac12-\frac1{p'})-1} (1-r^2)^{k(\frac12-\frac1{p'})-1} \d
r\right)\|f\|_{L^p(\R^d)}^2\lesssim \|f\|_{L^p(\R^d)}^2
\end{align*}
since $p<2$. 
This concludes the proof of Proposition \ref{prop:f1}.
\end{proof}

\subsection{Estimating $\widehat{f}_2$ and $\widehat{f}_3$}
The estimates in this section will follow, in the spirit of Carleson--Sj\"olin \cite{CS72}, 
from a successive application of weighted versions of the Hausdorff--Young and the Hardy--Littlewood--Sobolev inequalities. 

\begin{proposition}\label{prop:f2}
There exists $C=C(k,d,p)<\infty$ such that 
\begin{align*}
  \|\widehat{f_2}\|_{L^2(\sph{d-1})}
  &\leq C\|f\|_{L^{p}(\R^d)},\qquad\text{if }1\leq p\leq \frac{2(d+k)}{d+k+2}, \\
  \|\widehat{f_3}\|_{L^2(\sph{d-1})}
  &\leq C\|f\|_{L^{p}(\R^d)},\qquad\text{if  }1\leq p\leq \frac{2(2d-k)}{2d-k+2}, 
\end{align*}
for every $G_k$-symmetric Schwartz function $f:\R^d\to\Co$.
\end{proposition}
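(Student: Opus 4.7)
By the symmetry $(d-k,\rho_1,\eta)\leftrightarrow(k,\rho_2,\zeta)$, which swaps the roles of the first and second block of variables, it suffices to treat $\widehat{f_2}$; the bound for $\widehat{f_3}$ follows identically with $k$ replaced by $d-k$, yielding the exponent $\frac{2(2d-k)}{2d-k+2}$. Moreover, it suffices to establish the endpoint $p=\frac{2(d+k)}{d+k+2}$, since interpolation of the resulting $L^p\!\to\!L^2(\sph{d-1})$ bound with the trivial estimate $\|\widehat{f}\|_{L^\infty}\leq\|f\|_{L^1}$ (using compactness of $\sph{d-1}$) extends the estimate to the full range $1\leq p\leq\frac{2(d+k)}{d+k+2}$.

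The plan is to use Lemma \ref{lem:SphInt} to rewrite $\|\widehat{f_2}\|_{L^2(\sph{d-1})}^2$ as a one-dimensional integral in $b:=|\zeta|\in[0,1]$, with $|\eta|=\sqrt{1-b^2}$. A short computation, using the weights $|\eta|^{k-d+2}|\zeta|^{1-k}$ appearing in \eqref{eq:defF2}, reduces the problem to controlling $\|\mathcal I(\sqrt{1-b^2},b)\|_{L^2_b([0,1])}$, where
\[
\mathcal I(a,b):=\int_0^\infty\rho_1^{(d-k)/2}\,R_1(\rho_1 a)\,\mathcal J(\rho_1,b)\d\rho_1,
\qquad
\mathcal J(\rho_1,b):=\int_{1/b}^\infty \rho_2^{(k-1)/2} f_0(\rho_1,\rho_2) e^{i\rho_2 b}\d\rho_2.
\]

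For fixed $\rho_1$, I treat $\mathcal J(\rho_1,\cdot)$ as a one-dimensional Fourier transform by writing the weight $\rho_2^{(k-1)/2}=\rho_2^{(k-1)/p}\cdot\rho_2^{-\delta}$ with $\delta:=(k-1)\!\left(\tfrac1p-\tfrac12\right)\in[0,1)$ (easy to verify in our range of $p$). Dropping the indicator $\mathbbm{1}_{\rho_2 b\geq 1}$ and invoking the weighted Hausdorff--Young inequality (Lemma \ref{lem:weightedHY}) yields
\[
\|\mathcal J(\rho_1,\cdot)\|_{L^{q_1}([0,1])}\lesssim \bigl\|\rho_2^{(k-1)/p} f_0(\rho_1,\rho_2)\bigr\|_{L^p_{\rho_2}},
\]
where $\tfrac1{q_1}=\tfrac{k+1}2-\tfrac{k}{p}$, which at the endpoint $p=\tfrac{2(d+k)}{d+k+2}$ evaluates to $\tfrac1{q_1}=\tfrac{d-k}{2(d+k)}$. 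Note that the right-hand side, raised to the $p$-th power and integrated against $\rho_1^{d-k-1}\d\rho_1$, is exactly $\|f\|_{L^p(\R^d)}^p$ by \eqref{eq:Norm}.

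Combining these pieces via Minkowski in $\rho_1$ and H\"older in $b$ with conjugate exponent $s$ defined by $\tfrac1s+\tfrac1{q_1}=\tfrac12$, I obtain
\[
\|\mathcal I(\sqrt{1-b^2},b)\|_{L^2_b}\lesssim \int_0^\infty \rho_1^{(d-k)/2}\,\bigl\|R_1(\rho_1\sqrt{1-b^2})\bigr\|_{L^s_b([0,1])}\, \bigl\|\rho_2^{(k-1)/p} f_0(\rho_1,\cdot)\bigr\|_{L^p_{\rho_2}}\d\rho_1.
\]
A direct computation via $b\mapsto\sin\theta$ and the pointwise decay $|R_1(r)|\lesssim r^{(d-k-2)/2}(1+r)^{(k-d-1)/2}$ from \eqref{eq:EstR1} gives an explicit power-type bound for $\|R_1(\rho_1\sqrt{1-b^2})\|_{L^s_b}$ in terms of $\rho_1$. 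A final H\"older in $\rho_1$ with exponent $p'$, possibly after recasting the resulting convolution-type kernel in $\rho_1$ as a bounded operator via Lemmas \ref{lem:MappingT}--\ref{lem:MappingS}, packages the right-hand side into $\|f\|_{L^p(\R^d)}$.

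The main obstacle is pure exponent bookkeeping: I must verify that at $p=\tfrac{2(d+k)}{d+k+2}$, all convergence conditions align simultaneously — the $b$-integral defining $\|R_1(\rho_1\sqrt{1-b^2})\|_{L^s_b}$ is finite, the resulting power of $\rho_1$ matches the H\"older exponent $p'$, and the hypotheses of Lemmas \ref{lem:MappingT}--\ref{lem:MappingS} on the weighted HLS-type kernels are satisfied. The expectation, based on the analogous Carleson--Sj\"olin scheme for the circle, is that all these thresholds coincide precisely at the claimed endpoint $p=\tfrac{2(d+k)}{d+k+2}$, with any larger $p$ causing the innermost $b$-integral (coming from the weight $R_1$) to diverge.
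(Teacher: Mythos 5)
Your outline---reducing to $\widehat{f_2}$ by the symmetry $k\leftrightarrow d-k$, interpolating to the endpoint $p_\star=\frac{2(d+k)}{d+k+2}$, passing to slice coordinates so that the sphere weights cancel (they do: one checks $\|\widehat{f_2}\|_{L^2(\sph{d-1})}\simeq\|\mathcal I(\sqrt{1-b^2},b)\|_{L^2_b([0,1])}$), and feeding the inner $\rho_2$-integral into the weighted Hausdorff--Young Lemma~\ref{lem:weightedHY}---is the right skeleton and agrees with the paper's scheme. But there are two genuine gaps.

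First, ``dropping the indicator $\mathbbm{1}_{\rho_2 b\geq 1}$'' is not a domination step for an oscillatory integral; it replaces $\mathcal J$ by a different object, not an upper bound. The paper instead writes, using $1/|\zeta|\geq1$,
\[
\int_{1/|\zeta|}^\infty=\int_{1}^\infty-\int_{1}^{1/|\zeta|},
\]
so that the first piece becomes the genuine Fourier transform $\widehat{g}_{\rho_1}(|\zeta|)$ of the $b$-\emph{independent} function $g_{\rho_1}=\rho_2^{-\delta}g(\rho_1,\rho_2)\mathbbm{1}_{[1,\infty)}(\rho_2)$ (to which Lemma~\ref{lem:weightedHY} applies), while the non-oscillatory tail $\int_1^{1/|\zeta|}$ produces an additional summand $\textup{II}$ that must be estimated separately (it is handled by a crude H\"older bound and the argument of Proposition~\ref{prop:f1}). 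You need to account for that extra piece.

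Second, and more seriously, Minkowski in $\rho_1$ followed by H\"older in $\rho_1$ fails precisely at $p=p_\star$. After your Minkowski and H\"older-in-$b$ steps the remaining task is to show
\[
\int_0^\infty\rho_1^{\frac{d-k}{2}}\,W(\rho_1)\,h(\rho_1)\,\d\rho_1\lesssim\big\|\rho_1^{\frac{d-k-1}{p_\star}}h\big\|_{L^{p_\star}_{\rho_1}},
\qquad
W(\rho_1):=\big\|R_1(\rho_1\sqrt{1-b^2})\big\|_{L^s_b([0,1])},
\]
with $\tfrac1s=\tfrac12-\tfrac1{q_1}=\tfrac{k}{d+k}$. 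A direct computation with \eqref{eq:EstR1} gives $W(\rho_1)\simeq\rho_1^{(d-k-2)/2}$ for $\rho_1\leq1$ and $W(\rho_1)\simeq\rho_1^{-2k/(d+k)}$ for $\rho_1\geq1$, and the resulting kernel $\rho_1^{\frac{d-k}{2}-\frac{d-k-1}{p_\star}}W(\rho_1)$ behaves like $\rho_1^{-1/p_\star'}$ as $\rho_1\to\infty$, so its $L^{p_\star'}_{\rho_1}$-norm diverges logarithmically. Since you have already reduced to a scalar pairing of a fixed kernel against $\tilde h=\rho_1^{(d-k-1)/p_\star}h\in L^{p_\star}$, there is no room for Lemmas~\ref{lem:MappingT}--\ref{lem:MappingS}: boundedness of a linear functional on $L^{p_\star}$ is equivalent to the kernel lying in $L^{p_\star'}$, which it does not. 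The paper avoids Minkowski entirely: it expands $\|\textup{I}\|_{L^2(\sph{d-1})}^2$ as a double integral over $(\rho_1,\tilde\rho_1)$, keeps the genuinely off-diagonal (non-tensor-product) kernel $K_{\rho_1,\tilde\rho_1}$, estimates $\|K_{\rho_1,\tilde\rho_1}\|_{L^{(q_\star/2)'}}$, and only then invokes the endpoint-sensitive Hardy-type bound of Lemma~\ref{lem:MappingT} for $\mathcal T_{a,b}$ with $a+b=\tfrac{2}{p_\star'}$, $bp_\star'<1$. That bilinear ($TT^\ast$-style) expansion captures off-diagonal decay that Minkowski throws away, and it is exactly this which closes the endpoint $p=p_\star$.
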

\begin{proof}
We focus on the estimate for $f_2$ because the analysis of $f_3$ is analogous up
to interchanging the roles of $k, d-k$. 
By interpolation with the trivial estimate at $p=1$, it suffices to consider the endpoint case $p=p_\star:=\frac{2(d+k)}{d+k+2}$. 
 Set $\delta:=(k-1)(\frac{1}{p_\star}-\frac{1}2)$, and define 
\begin{equation}\label{eq:defG}
g(\rho_1,\rho_2):=\rho_1^{\frac{d-k-1}{p_\star}}\rho_2^{\frac{k-1}{p_\star}} f_0(\rho_1,\rho_2),
\end{equation} 
for each $\rho_1,\rho_2>0$, as well as the corresponding ``slice'' function $g_{\rho_1}:(0,\infty)\to\Co$,
\begin{equation}\label{eq:defG1}
g_{\rho_1}(\rho_2):=\rho_2^{-\delta}g(\rho_1,\rho_2)\mathbbm{1}_{[1,\infty)}(\rho_2).
\end{equation} 
From the definitions \eqref{eq:defF2} of $\widehat{f_2}$ and \eqref{eq:defG}--\eqref{eq:defG1} of
$g, g_{\rho_1}$, respectively, it follows that
\begin{align*}
|\widehat{f_2}&(\eta,\zeta)|
\lesssim 
|\eta|^{\frac{2-d+k}{2}} |\zeta|^{\frac{1-k}2}\left| \int_0^\infty  \rho_1^{\frac{d-k}2-\frac{d-k-1}{p_\star}} R_1(\rho_1|\eta|) 
\left(\int_{\frac1{|\zeta|}}^\infty \rho_2^{-\delta}  g(\rho_1,\rho_2) e^{i\rho_2|\zeta|}\d\rho_2\right)\d\rho_1\right|\\
&\leq|\eta|^{\frac{2-d+k}{2}} |\zeta|^{\frac{1-k}2} \int_0^\infty  \rho_1^{\frac{d-k}2-\frac{d-k-1}{p_\star}}
|R_1(\rho_1|\eta|)| \left(|\widehat{g}_{\rho_1}(|\zeta|)|+\int_1^{\frac1{|\zeta|}} \rho_2^{-\delta}  |g(\rho_1,\rho_2)|\d\rho_2\right)\d\rho_1,
\end{align*}
for every $(\eta,\zeta)\in\R^{d-k}\times\R^k$.
Estimate \eqref{eq:EstR1} then implies
\begin{align*}
|\widehat{f_2}(\eta,\zeta)|
&\lesssim
 |\zeta|^{\frac{1-k}2} \int_0^\infty  \rho_1^{\frac{d-k-1}{p_\star'}} (1+\rho_1|\eta|)^{\frac{k-d-1}2} 
\left(|\widehat{g}_{\rho_1}(|\zeta|)|+\int_1^{\frac1{|\zeta|}} \rho_2^{-\delta}  |g(\rho_1,\rho_2)|\d\rho_2\right)\d\rho_1\\
&\lesssim
 |\zeta|^{\frac{1-k}2} \int_0^\infty  \rho_1^{\frac{d-k-1}{p_\star'}} (1+\rho_1|\eta|)^{\frac{k-d-1}2} 
\left(|\widehat{g}_{\rho_1}(|\zeta|)|+{|\zeta|}^{\delta-\frac1{p_\star'}}  \|g(\rho_1,\cdot)\|_{L^{p_\star}_{\rho_2}(\R_+)}\right)\d\rho_1,
\end{align*}
where the last line follows from an application of H\"older's inequality.
We break up the latter integral into two pieces, and analyze both summands $\textup{I}, \textup{II}$ (defined in \eqref{eq:DefI}, \eqref{eq:DefII} below) separately.
The second one is easier to handle, and can be bounded using H\"older's inequality as follows:
\begin{align}\label{eq:DefII}
  \begin{aligned}
\textup{II}(\eta,\zeta)
&:= |\zeta|^{\frac{1-k}2+\delta-\frac1{p_\star'}} \int_0^\infty 
 \rho_1^{\frac{d-k-1}{p_\star'}} (1+\rho_1|\eta|)^{\frac{k-d-1}2}
\|g(\rho_1,\cdot)\|_{L^{p_\star}_{\rho_2}(\R_+)}\d\rho_1\\
  &\lesssim
  |\eta|^{\frac{k-d}{p_\star'}}  |\zeta|^{-\frac{k}{p_\star'}}  \|g\|_{L^{p_\star}(\R_+^2)}
  \simeq|\eta|^{\frac{k-d}{p_\star'}}|\zeta|^{-\frac{k}{p_\star'}} \|f\|_{L^{p_\star}(\R^d)}.
\end{aligned}
\end{align}
Comparing with \eqref{eq:LaterAlso}, we see that this piece can be handled exactly as in the proof of
Proposition~\ref{prop:f1}, resulting in the following bound:
 \[\int_{\sph{d-1}}\textup{II}^2(\eta,\zeta)\d\sigma(\eta,\zeta)\lesssim \|f\|_{L^{p_\star}(\R^d)}^2.\]
 Here we only used $p_\star<2$.
It remains to estimate the integral of (the square of) the first summand, given by
\begin{equation}\label{eq:DefI} 
\textup{I}(\eta,\zeta):=|\zeta|^{\frac{1-k}2} \int_0^\infty  \rho_1^{\frac{d-k-1}{p_\star'}} (1+\rho_1|\eta|)^{\frac{k-d-1}2} 
|\widehat{g}_{\rho_1}(|\zeta|)|\d\rho_1.
\end{equation}
With that purpose in mind, specialize Lemma~\ref{lem:weightedHY} to $p=p_\star$
and 
\begin{equation*}
\frac1q=\frac1{q_\star}:=1-\frac1{p_\star}-\delta=\frac{k+1}2-\frac{k}{p_\star}=\frac{d-k}{2(d+k)},
\end{equation*}
to obtain the following estimate:
\begin{equation}\label{eq:AfterWHY}
\|\widehat{g}_{\rho_1}\|_{L^{q_\star}(\R_+)}
\lesssim \|g(\rho_1,\cdot)\mathbbm{1}_{[1,\infty)}\|_{L^{p_\star}(\R_+)}
\leq \|g(\rho_1,\cdot)\|_{L^{p_\star}(\R_+)}.
\end{equation}
From Lemma \ref{lem:SphInt}, we have that
\begin{align}
\int_{\sph{d-1}}&\textup{I}^2(\eta,\zeta)\d\sigma(\eta,\zeta)\notag\\
&\lesssim\int_0^1 r^{d-k-1}(1-r^2)^{-\frac12} 
\left( \int_0^\infty  \rho_1^{\frac{d-k-1}{p_\star'}} (1+\rho_1r)^{\frac{k-d-1}2} 
|\widehat{g}_{\rho_1}(\sqrt{1-r^2})|\d\rho_1\right)^2 \d r\notag\\
&=\int_0^1 (1-s^2)^{\frac{d-k-2}2} 
\left( \int_0^\infty  \rho_1^{\frac{d-k-1}{p_\star'}} (1+\rho_1\sqrt{1-s^2})^{\frac{k-d-1}2} 
|\widehat{g}_{\rho_1}(s)|\d\rho_1\right)^2 \d s\notag\\
&= \int_0^\infty \int_0^\infty  (\rho_1\tilde\rho_1)^{\frac{d-k-1}{p_\star'}}  \left(\int_0^1 K_{\rho_1,\tilde\rho_1}(s)|\widehat{g}_{\rho_1}(s)| 
|\widehat{g}_{\tilde\rho_1}(s)|\d s\right)\d\tilde\rho_1\d\rho_1 \notag\\
&= 2\int_0^\infty \int_0^{\rho_1}  (\rho_1\tilde\rho_1)^{\frac{d-k-1}{p_\star'}}  \left(\int_0^1 K_{\rho_1,\tilde\rho_1}(s)|\widehat{g}_{\rho_1}(s)| 
|\widehat{g}_{\tilde\rho_1}(s)|\d s\right)\d\tilde\rho_1\d\rho_1, \label{eq:TBC}
\end{align}
where, for each $(\rho_1,\tilde \rho_1)\in\R_+^2$, the function $K_{\rho_1,\tilde\rho_1}:[0,1]\to\R$ is defined via
\[K_{\rho_1,\tilde\rho_1}(s):=(1-s^2)^{\frac{d-k-2}2}(1+\rho_1\sqrt{1-s^2})^{\frac{k-d-1}2}(1+\tilde\rho_1\sqrt{1-s^2})^{\frac{k-d-1}2}.\]
Set $I:=[0,1]$,
and estimate the $L^{(\frac {q_\star}2)'}(I)$-norm of $K_{\rho_1,\tilde\rho_1}$ as follows: 
\begin{align}
\|K_{\rho_1,\tilde\rho_1}\|_{L^{(\frac {q_\star}2)'}(I)}&
= \left(\int_0^1
r^{\frac{q_\star(d-k-2)}{q_\star-2}+1}(1-r^2)^{-\frac12}(1+\rho_1r)^{\frac{q_\star(k-d-1)}{2(q_\star-2)}}(1+\tilde\rho_1r)^{\frac{q_\star(k-d-1)}{2(q_\star-2)}}
\d r\right)^{\frac{q_\star-2}{q_\star}}\notag\\
\lesssim& \left(\int_0^{\frac12}
r^{\frac{q_\star(d-k-2)}{q_\star-2}+1}(1+\rho_1r)^{\frac{q_\star(k-d-1)}{2(q_\star-2)}}(1+\tilde\rho_1r)^{\frac{q_\star(k-d-1)}{2(q_\star-2)}}
\d r\right)^{\frac{q_\star-2}{q_\star}} \notag\\
&+(1+\rho_1)^{\frac{k-d-1}{2}}(1+\tilde\rho_1)^{\frac{k-d-1}{2}}\notag\\
\leq& \left(\tilde\rho_1^{\frac{q_\star(k-d+2)}{q_\star-2}-2}\int_0^{\infty} t^{\frac{q_\star(d-k-2)}{q_\star-2}+1}
 \left(1+\frac{\rho_1}{\tilde\rho_1}t\right)^{\frac{q_\star(k-d-1)}{2(q_\star-2)}}(1+t)^{\frac{q_\star(k-d-1)}{2(q_\star-2)}}
 \d t\right)^{\frac{q_\star-2}{q_\star}}\notag\\
&+(1+\rho_1)^{\frac{k-d-1}{2}}(1+\tilde\rho_1)^{\frac{k-d-1}{2}}\notag\\
\lesssim&\rho_1^{\frac{k-d-1}2}\tilde\rho_1^{\frac{k-d+1}2+\frac4{q_\star}}+(1+\rho_1)^{\frac{k-d-1}{2}}(1+\tilde\rho_1)^{\frac{k-d-1}{2}},\label{eq:TwoSummands}
\end{align}
where in the last line we used the trivial bound $1+\frac{\rho_1}{\tilde\rho_1}t\geq \frac{\rho_1}{\tilde\rho_1}t$.
The contribution from the second summand in \eqref{eq:TwoSummands} is straightforward to handle, and so we focus on the first one.
Going back to \eqref{eq:TBC}, we then have that
\begin{align*}
\int_{\sph{d-1}}&\textup{I}^2(\eta,\zeta)\d\sigma(\eta,\zeta) \\
&\lesssim 
 \int_0^\infty \int_0^{\rho_1}  (\rho_1\tilde\rho_1)^{\frac{d-k-1}{p_\star'}}  
 \|K_{\rho_1,\tilde\rho_1}\|_{L^{(\frac {q_\star}2)'}(I)} 
 \|\widehat{g}_{\rho_1}\|_{L^{q_\star}(I)} \|\widehat{g}_{\tilde\rho_1}\|_{L^{q_\star}(I)}
 \d\tilde\rho_1\d\rho_1\\
&\lesssim 
 \int_0^\infty \int_0^{\rho_1}  (\rho_1\tilde\rho_1)^{\frac{d-k-1}{p_\star'}}  
\rho_1^{\frac{k-d-1}2}\tilde\rho_1^{\frac{k-d+1}2+\frac4{q_\star}} \|g(\rho_1,\cdot)\|_{L^{p_\star}(\R_+)} \|g(\tilde\rho_1,\cdot)\|_{L^{p_\star}(\R_+)} 
 \d\tilde\rho_1\d\rho_1\\
&= \int_0^\infty \|g(\rho_1,\cdot)\|_{L^{p_\star}(\R_+)} \left(\rho_1^{-a}\int_0^{\rho_1}  \tilde\rho_1^{-b}  
  \|g(\tilde\rho_1,\cdot)\|_{L^{p_\star}(\R_+)} \d\tilde\rho_1\right)\d\rho_1,
 \end{align*}
 where the second estimate  follows from \eqref{eq:AfterWHY} and \eqref{eq:TwoSummands}, and we set\footnote{Alternatively, $a=\frac{2d-1}{d+k}$ and $b=\frac{k-d-1}{d+k}$.}
 \[
   a:= \frac{d-k+1}2-\frac{d-k-1}{p_\star'},\qquad
   b:= (d-k-1)\left(\frac12-\frac1{p_\star'}\right)-\frac4{q_\star}.
 \]
 It is straightforward to check that these exponents satisfy 
 $a+b=\frac2{p_\star'}$ and 
  $b p_\star'<1$. 
  Since we also have that $1<p_\star\leq p_\star'<\infty$, Lemma~\ref{lem:MappingT} yields
 \begin{align*}
\int_{\sph{d-1}}\textup{I}^2(\eta,\zeta)\d\sigma(\eta,\zeta)
&\lesssim
\left\|\|g(\rho_1,\cdot)\|_{L^{p_\star}(\R_+)}\right\|_{L_{\rho_1}^{p_\star}(\R_+)}
\left\|\rho_1^{-a}\int_0^{\rho_1}  \tilde\rho_1^{-b}  
  \|g(\tilde\rho_1,\cdot)\|_{L^{p_\star}(\R_+)} \d\tilde\rho_1\right\|_{L_{\rho_1}^{p_\star'}(\R_+)}\\
&\lesssim
\left\|\|g(\rho_1,\cdot)\|_{L^{p_\star}(\R_+)}\right\|_{L_{\rho_1}^{p_\star}(\R_+)}^2 
=\|g\|_{L^{p_\star}(\R_+^2)}^2\simeq\|f\|_{L^{p_\star}(\R^d)}^2.
 \end{align*}
This concludes the proof of Proposition \ref{prop:f2}.
\end{proof}

\subsection{Estimating $\widehat{f}_4$ and $\widehat{f}_5$}
 The estimates in this section are the most delicate ones, but most of the work has been done already.
 We heavily rely on the weighted estimates for the restriction-type operator $\mathcal R_{\alpha,\beta}$, recall \eqref{eq:DefRab}, which we proved in  \S~\ref{sec:WRtypeE}, and record the following consequence.

\begin{corollary} \label{cor:WeightedCor2}
Let  $d\geq 4, k\in\{2,3,\ldots,d-2\}, m:=\min\{d-k,k\}$, and $1\leq  p\leq \frac{2(d+m)}{d+m+2}$.
Set 
\begin{equation}\label{eq:DefAlBe}
  \alpha_p := (d-k-1)\left(\frac{1}{p}-\frac{1}{2}\right),\quad
  \beta_p := (k-1)\left(\frac{1}{p}-\frac{1}{2}\right).
\end{equation}
Then $\mathcal R_{\alpha_p,\beta_p}:L^p(\R^2)\to L^2(\sph{1})$ defines a bounded operator.
\end{corollary}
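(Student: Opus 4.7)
The plan is to reduce by duality to the adjoint statement and apply Stein's complex interpolation theorem twice: first to establish the critical endpoint $p = p_\star := \frac{2(d+m)}{d+m+2}$, and then to extend the bound to the full range $p \in [1, p_\star]$. By duality, the claim is equivalent to the boundedness of $\mathcal R^\ast_{\alpha_p, \beta_p}: L^2(\sph{1}) \to L^{p'}(\R^2)$ for every $1 \leq p \leq p_\star$.

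For the endpoint $p = p_\star$, consider the analytic family
$$T_z := \mathcal R^\ast_{z\alpha_\ast, z\beta_\ast}, \qquad z \in \{w\in\Co : 0 \leq \Re w \leq 1\},$$
where $\alpha_\ast := (d-k-1)t$ and $\beta_\ast := (k-1)t$ for a positive parameter $t$ to be chosen. Since the weight in $\mathcal R^\ast_{\alpha,\beta}$ enters as a pure multiplier and the indicator factors do not depend on $z$, one has the pointwise identities $|T_{it}(F)(x)| = |\mathcal R^\ast_{0,0}(F)(x)|$ and $|T_{1+it}(F)(x)| = |\mathcal R^\ast_{\alpha_\ast, \beta_\ast}(F)(x)|$. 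Proposition \ref{prop:ZygmundRange} applied with $q_B = 2$ and $p_B = 6/5$ controls the line $\Re z = 0$ by $L^2(\sph{1}) \to L^6(\R^2)$, while Proposition \ref{prop:SteinInterpolation} with $q_A = 2$ and a suitable $p_A \in (1, 2]$ controls $\Re z = 1$ by $L^2(\sph{1}) \to L^{p_A'}(\R^2)$. The parameters $t, p_A$ and the interpolation point $\theta \in (0, 1)$ are determined by imposing $\theta t = 1/p_\star - 1/2 = 1/(d+m)$ (so that $T_\theta = \mathcal R^\ast_{\alpha_{p_\star}, \beta_{p_\star}}$) together with $1/p_\star' = (1-\theta)/6 + \theta/p_A'$ (so that Riesz--Thorin on the output space yields $L^{p_\star'}(\R^2)$). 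A direct algebraic calculation then shows that the main hypothesis of Proposition~\ref{prop:SteinInterpolation}, namely $(d+m-3)t \geq 3/p_A' - 1/2$, reduces to an identity, and the remaining strict inequalities $1/p_A' < (d-2)t < 2/p_A'$ together with $1 < p_A \leq 2$ restrict $\theta$ to an explicit non-empty subinterval of $(0, 1)$ for each admissible $d, m$.

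For the trivial endpoint $p = 1$, one has $\alpha_1 = (d-k-1)/2 > 0$ and $\beta_1 = (k-1)/2 > 0$ because $k, d-k \geq 2$, so Cauchy--Schwarz on $\sph{1}$ yields
$$|\mathcal R^\ast_{\alpha_1, \beta_1}(F)(x)| \leq (1+|x_1|)^{-\alpha_1}(1+|x_2|)^{-\beta_1} \|F\|_{L^1(\sph{1})} \lesssim \|F\|_{L^2(\sph{1})},$$
giving $\mathcal R^\ast_{\alpha_1, \beta_1}: L^2(\sph{1}) \to L^\infty(\R^2)$ for free. A second Stein interpolation, this time with the affine family $\widetilde T_z := \mathcal R^\ast_{(1-z)\alpha_1 + z\alpha_{p_\star},\,(1-z)\beta_1 + z\beta_{p_\star}}$, exploits the affine linearity of $\alpha_p$ and $\beta_p$ in $1/p$: at $\Re z = \theta$, the modulus of the weight agrees with that of $\mathcal R^\ast_{\alpha_p, \beta_p}$ for $1/p = (1-\theta) + \theta/p_\star$, and interpolation in the output space produces exactly $L^{p'}(\R^2)$, covering the full range $p \in [1, p_\star]$.

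The main obstacle is the boundary case $d = m + 2$, which forces $d = 4$, $k = 2$, $m = 2$: there, Proposition~\ref{prop:SteinInterpolation} cannot be invoked directly at $(\alpha_{p_\star}, \beta_{p_\star})$ because its strict requirement $\alpha + \beta > 1/p'$ degenerates to an equality. The interpolation with Proposition~\ref{prop:ZygmundRange} is precisely what resolves this degeneracy, by realizing the pair $(\alpha_{p_\star}, \beta_{p_\star})$ at an intermediate interpolation point $\theta$ rather than at $\Re z = 1$; this permits the auxiliary weight exponents $(\alpha_\ast, \beta_\ast)$ at the boundary $\Re z = 1$ to be chosen strictly inside the admissible region of Proposition~\ref{prop:SteinInterpolation}. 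Verifying this compatibility between the two propositions via the explicit parameter relations is the principal technical step.
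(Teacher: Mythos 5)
Your argument is correct, and it follows the same basic strategy as the paper (interpolate a weighted family with Proposition~\ref{prop:ZygmundRange} via Stein's theorem, then descend from the endpoint $p_\star=\frac{2(d+m)}{d+m+2}$ to the full range). You correctly identify that a direct appeal to Proposition~\ref{prop:SteinInterpolation} at $(\alpha_{p_\star},\beta_{p_\star})$ breaks down precisely when $d=m+2$, i.e.\ $d=4$, $k=2$, because $\alpha_{p_\star}+\beta_{p_\star}=\frac{1}{p_\star'}$ there, so the strict hypothesis degenerates. Your algebra checks out: with $\theta t = \frac{1}{d+m}$ and the Riesz--Thorin relation on the output exponent, the inequality $(d+m-3)t\geq\frac{3}{p_A'}-\frac12$ does collapse to an identity, and the remaining strict constraints carve out a non-empty window for $\theta$, e.g.\ roughly $\bigl(\max\{0,\tfrac{d-2m}{d+m}\},\tfrac{d+m-3}{d+m}\bigr]$.

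The genuine difference is organizational. The paper applies Proposition~\ref{prop:SteinInterpolation} \emph{directly} at $(\alpha_{p_\star},\beta_{p_\star})$ whenever $m<d-2$, with no interpolation, and only in the single exceptional case $d=4$, $k=2$ does it run a Stein interpolation, using Proposition~\ref{prop:GenSB} (at $(\alpha,\beta)=(\tfrac13,\tfrac13)$, $L^2\to L^2$) rather than Proposition~\ref{prop:SteinInterpolation} at the $\Re s=1$ boundary, against Proposition~\ref{prop:ZygmundRange} at $\Re s=0$, landing exactly at $\mathcal R^\ast_{1/6,1/6}\colon L^2\to L^3$. You instead run a uniform interpolation for every $(d,k)$, with Proposition~\ref{prop:SteinInterpolation} at the $\Re z=1$ boundary; since Proposition~\ref{prop:SteinInterpolation} is itself already an interpolated consequence of Proposition~\ref{prop:GenSB}, your route stacks two layers of Stein interpolation where the paper uses at most one. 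The trade-off is clear: your version is more uniform and hides the case distinction, at the cost of extra parameter bookkeeping in the generic case where the paper's proof is a one-line citation. You also make the reduction from the endpoint $p_\star$ to general $p\in[1,p_\star]$ fully explicit via the affine family $\widetilde T_z$ together with the trivial Cauchy--Schwarz bound at $p=1$; the paper compresses this to ``by interpolation and compactness,'' so your treatment is slightly more careful on this point. One small remark for polish: in invoking Stein's theorem you should note (as the paper does in the proof of Proposition~\ref{prop:SteinInterpolation}) that the relevant pairings against simple functions are analytic in the interior of the strip, continuous up to the boundary, and of admissible growth; this is routine here since the weights $(1+|x_j|)^{-z\gamma}$ sit outside the integral and have modulus independent of $\Im z$.
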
              
\begin{proof}
By interpolation and compactness of $\sph{1}$, it suffices to consider the endpoint case $p=\frac{2(d+m)}{d+m+2}$.
Proposition \ref{prop:SteinInterpolation} directly implies the desired conclusion in all situations of interest, except when $2k=4=d$, 
so we focus on that case.
Proposition \ref{prop:GenSB} implies 
\begin{equation}\label{eq:FirstInt}
\|\mathcal{R}_{\frac13,\frac13}^\ast(F)\|_{L^2(\R^2)}\lesssim \|F\|_{L^2(\sph{1})},
\end{equation}
and from Proposition \ref{prop:ZygmundRange} we have that
\begin{equation}\label{eq:SecondInt}
\|\mathcal{R}_{0,0}^\ast(F)\|_{L^6(\R^2)}\lesssim \|F\|_{L^2(\sph{1})}.
\end{equation}
 As in the proof of Proposition \ref{prop:SteinInterpolation}, Stein's interpolation theorem \cite[p.~205]{SW} can be applied to the analytic family of operators $\{\mathcal R_{\frac{s}3,\frac{s}3}^\ast: 0\leq\Re(s)\leq 1\}$, yielding from \eqref{eq:FirstInt}--\eqref{eq:SecondInt} 
  \[\|\mathcal{R}_{\frac16,\frac16}^\ast(F)\|_{L^3(\R^2)}\lesssim \|F\|_{L^2(\sph{1})}.\]
  This is equivalent, by duality, to the desired conclusion.
\end{proof}

The relevance of Corollary \ref{cor:WeightedCor2} becomes apparent once we note that from definitions \eqref{eq:defF4}, \eqref{eq:defF5} it follows that:
 \begin{align}
   \widehat{f_4}(\eta,\zeta)
   &= A_1A_2|\eta|^{\frac{k-d+1}{2}} |\zeta|^{\frac{1-k}2}
   \mathcal R_{\alpha_p,\beta_p}(h)(-|\eta|,-|\zeta|),\label{eq:f4IntermsR}\\
   \widehat{f_5}(\eta,\zeta)
   &=A_1\overline{A_2}|\eta|^{\frac{k-d+1}{2}} |\zeta|^{\frac{1-k}2} 
    \mathcal R_{\alpha_p,\beta_p}(h)(-|\eta|,|\zeta|). \notag
 \end{align} 
 Here, $\alpha_p,\beta_p$ were defined in \eqref{eq:DefAlBe}, and
\begin{equation}\label{eq:hdefh}
h(\rho_1,\rho_2):=\rho_1^{\frac{d-k-1}{2}}\rho_2^{\frac{k-1}{2}}(1+\rho_1)^{\alpha_p} (1+\rho_2)^{\beta_p} f_0(\rho_1,\rho_2)\mathbbm1_{[1,\infty)^2}(\rho_1,\rho_2).
\end{equation}
  
\begin{proposition}\label{prop:f4}
For every $1\leq p\leq \frac{2(d+m)}{d+m+2}$, there exists $C=C(k,d,p)<\infty$ such that 
\[
  \|\widehat{f_4}\|_{L^2(\sph{d-1})} + 
  \|\widehat{f_5}\|_{L^2(\sph{d-1})} \leq C\|f\|_{L^p(\R^d)},
\]
for every $G_k$-symmetric Schwartz function $f:\R^d\to\Co$.
\end{proposition}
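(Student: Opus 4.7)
The plan is to exploit the identities \eqref{eq:f4IntermsR} supplied just before the proposition, which already package $\widehat{f_4}$ and $\widehat{f_5}$ as values of the operator $\mathcal R_{\alpha_p,\beta_p}$ applied to the weighted function $h$ defined in \eqref{eq:hdefh}. Since $\mathcal R_{\alpha_p,\beta_p}(h)$ depends on $(\eta,\zeta)$ only through $(|\eta|,|\zeta|)$, the proposition essentially reduces to a bookkeeping exercise relating an integral on $\sph{d-1}$ to an integral on $\sph{1}$, followed by a direct invocation of Corollary \ref{cor:WeightedCor2}.

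First I would apply Lemma \ref{lem:SphInt} to expand $\|\widehat{f_4}\|_{L^2(\sph{d-1})}^2$ as a weighted one-dimensional integral in $r\in[0,1]$, pulling out the factor $|\eta|^{k-d+1}|\zeta|^{1-k}=r^{k-d+1}(1-r^2)^{(1-k)/2}$ coming from \eqref{eq:f4IntermsR}. The powers combine cleanly with the Jacobian $r^{d-k-1}(1-r^2)^{(k-2)/2}$ to leave precisely $(1-r^2)^{-1/2}\d r$, and the substitution $r=\cos\phi$ turns this into the arclength measure $\d\phi$ on the quarter-circle $\{\omega\in\sph{1}: \omega_1,\omega_2\geq 0\}$. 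Because $\mathcal R_{\alpha_p,\beta_p}(h)(\omega)$ depends only on $(|\omega_1|,|\omega_2|)$, the resulting quantity is comparable to $\|\mathcal R_{\alpha_p,\beta_p}(h)\|_{L^2(\sph{1})}^2$, and the analogous computation handles $\widehat{f_5}$ with only a sign change in the second coordinate of $\omega$.

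Next I would invoke Corollary \ref{cor:WeightedCor2}, which is exactly calibrated to this situation and delivers $\|\mathcal R_{\alpha_p,\beta_p}(h)\|_{L^2(\sph{1})}\lesssim \|h\|_{L^p(\R^2)}$ throughout the relevant range $1\leq p\leq \frac{2(d+m)}{d+m+2}$. It then remains to check the weighted norm bound $\|h\|_{L^p(\R^2)}\lesssim \|f\|_{L^p(\R^d)}$. Using definition \eqref{eq:hdefh} of $h$ together with the defining identities $\alpha_p p=(d-k-1)(1-p/2)$ and $\beta_p p=(k-1)(1-p/2)$, one sees that on the support $\{\rho_1,\rho_2\geq 1\}$ the product $\rho_i^{(d_i-1)p/2}(1+\rho_i)^{\gamma_i p}$ is comparable to $\rho_i^{d_i-1}$ (with $d_1=d-k$, $d_2=k$, $\gamma_1=\alpha_p$, $\gamma_2=\beta_p$). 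Comparing with \eqref{eq:Norm} yields $\|h\|_{L^p(\R^2)}^p\lesssim \|f\|_{L^p(\R^d)}^p$.

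The main obstacle I anticipate is purely notational: verifying that the powers of $|\eta|,|\zeta|$ and $(1+\rho_i)$ collapse exactly as needed so that Corollary \ref{cor:WeightedCor2} can be applied off the shelf. There are no new analytic ingredients beyond the preparatory results, since the harder oscillatory-integral and interpolation work has already been absorbed into Propositions \ref{prop:GenSB}, \ref{prop:SteinInterpolation}, \ref{prop:ZygmundRange} and hence into Corollary \ref{cor:WeightedCor2}; the proof is essentially a change of variables glued onto that corollary.
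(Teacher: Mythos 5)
Your proposal is correct and is, step for step, the same argument as the paper's proof: apply Lemma~\ref{lem:SphInt} to the identity~\eqref{eq:f4IntermsR}, observe that the slice Jacobian $r^{d-k-1}(1-r^2)^{(k-2)/2}$ cancels the weight $r^{k-d+1}(1-r^2)^{(1-k)/2}$ to leave the arclength measure on $\sph{1}$, invoke Corollary~\ref{cor:WeightedCor2}, and finally use the support condition $\mathbbm{1}_{[1,\infty)^2}$ in~\eqref{eq:hdefh} so that $1+\rho_j\simeq\rho_j$ and $\|h\|_{L^p(\R^2)}\lesssim\|f\|_{L^p(\R^d)}$ via~\eqref{eq:Norm}. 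The exponent bookkeeping you perform is exactly what the paper's shorter proof silently relies on, so there is no gap.
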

\begin{proof} 
By the usual considerations, it suffices to bound $\|\widehat{f_4}\|_{L^2(\sph{d-1})}$.
Identity \eqref{eq:f4IntermsR} and Lemma~\ref{lem:SphInt} together imply 
 \begin{align*}
  \int_{\sph{d-1}} |\widehat f_4(\eta,\zeta)|^2\d\sigma(\eta,\zeta)
   \lesssim \int_{\sph{1}} 
   \left|\mathcal R_{\alpha_p,\beta_p}(h)(\omega)\right|^2 \d\sigma(\omega),
\end{align*}
where the function $h$ was defined in \eqref{eq:hdefh}.
 For every $1\leq p\leq \frac{2(d+m)}{d+m+2}$, Corollary~\ref{cor:WeightedCor2} and \eqref{eq:Norm} together yield
\begin{align*}
  \int_{\sph{d-1}} |\widehat f_4(\eta,\zeta)|^2\d\sigma(\omega)
  \les  \|h\|_{L^p(\R^2)}^2
  \les  \| \rho_1^{\frac{d-k-1}{p}}\rho_2^{\frac{k-1}{p}}f_0(\rho_1,\rho_2)\|_{L^p_{\rho_1,\rho_2}(\R^2_+)}^2  
  \simeq  \|f\|_{L^p(\R^d)}^2.
\end{align*}
In the second estimate, we used the fact that the support of $h$ is contained in
$[1,\infty)^2\subset\R^2$, so that $1+\rho_j \sim \rho_j$, for each $j\in\{1,2\}$. 
This concludes the proof of Proposition~\ref{prop:f4}.
\end{proof}

\subsection{Conclusion of the proof}
The aim is to verify estimate \eqref{eq:GoalThm1} for every $G_k$-symmetric Schwartz function $f:\R^d\to\Co$.
Splitting $f$ as in \eqref{eq:splitting}, by the subsequent considerations it suffices to bound
{$\|\widehat{f}_j\|_{L^2(\sph{d-1})},1\leq j\leq 5$, appropriately in terms of $\|f\|_{L^p(\R^d)}$,
whenever $1\leq p\leq \frac{2(d+m)}{d+m+2}$}. 
In turn, this is accomplished by Propositions \ref{prop:f1}, \ref{prop:f2}, \ref{prop:f4}.
The proof of Theorem~\ref{thm:main} is thus complete.


\section{Proof of Theorem \ref{thm:G_kExistence}}\label{sec:ProofThmExistence}

In this section, we explain how Theorem \ref{thm:G_kExistence} follows from Theorem \ref{thm:main}.
As in most optimization problems, the difficulty is to find a weak limit of a maximizing sequence which is non-zero.
The key observation is that, in light of estimate \eqref{eq:NewTS}, the Stein--Tomas exponent
$p_d=\frac{2(d+1)}{d+3}$ {\it is no longer an endpoint exponent within the class of $G_k$-symmetric
functions}, provided $2\leq k\leq d-2$.
Therefore Theorem \ref{thm:main} can be used in conjunction with the Fourier decay property from Corollary
\ref{cor:decay} to show that maximizing sequences do not converge weakly to zero.
\medskip

Precompactness of maximizing sequences for non-endpoint, $L^2$-based adjoint restriction estimates is in general well-understood.
We follow the approach of  \cite{COSS19} which relies on a useful reformulation of the Br\'ezis--Lieb lemma \cite{BL83} given in \cite{FVV11}, but with an important twist.
Since translations are not symmetries of the $G_k$-symmetric problem, one may expect precompactness to hold, instead of precompactness modulo translations as in the general non-symmetric case \cite{CS12, FLS16}.
To facilitate the comparison with  references \cite{COSS19, FVV11}, and especially \cite{FLS16}, we choose to formulate and prove Theorem \ref{thm:G_kExistence} for the extension operator instead. 
Thus we are led to define the Hilbert space\footnote{That $L^2_{G_k}(\sph{d-1})$ is indeed a Hilbert space follows from the Riesz--Fischer Theorem, since $G_k$-symmetry is preserved under pointwise limits.
} $L^2_{G_k}(\sph{d-1}):=\{F\in L^2(\sph{d-1}): F \text{ is } G_k\text{-symmetric}\}$, and the quantity
\[
  {\bf T}^\ast_{d,k}(p) := \sup_{0\neq F\in L_{G_k}^{2}(\sph{d-1})} \frac{\|\widehat
   {F\sigma}\|_{L^{p'}(\R^d)}}{\|F\|_{L^2(\sph{d-1})}}.
 \]
 By duality, we naturally have that ${\bf T}_{d,k}^\ast(p)={\bf T}_{d,k}(p)$, but we will use both
 designations in order to keep track of the extremal problem under consideration.

\begin{theorem}\label{thm:G_kExistenceAdj}
Let $d\geq 4$, $k\in\{2,3,\ldots,d-2\}$, $m:=\min\{d-k,k\}$, and $1\leq p< \frac{2(d+m)}{d+m+2}$. Then
maximizing sequences for ${\bf T}_{d,k}^\ast(p)$, normalized in 
$L^2(\sph{d-1})$, are precompact in $L_{G_k}^{2}(\sph{d-1})$.
In particular, maximizers for ${\bf T}_{d,k}^\ast(p)$ exist.
\end{theorem}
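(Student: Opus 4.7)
Since the theorem is stated for the extension operator, I will work on the dual side throughout. Let $\{F_n\} \subset L^2_{G_k}(\sph{d-1})$ be a normalized maximizing sequence: $\|F_n\|_{L^2(\sph{d-1})} = 1$ and $\|\widehat{F_n\sigma}\|_{L^{p'}(\R^d)} \to T := {\bf T}^\ast_{d,k}(p)$. The plan is to follow the Brézis--Lieb variant of Fanelli--Vega--Visciglia employed in \cite{COSS19,FLS16}: by reflexivity, pass to a subsequence with $F_n \rightharpoonup F$ weakly in $L^2_{G_k}(\sph{d-1})$. In this symmetric setting, the translation symmetry is broken (so genuine compactness, not mere compactness modulo symmetries, is the right target), and the crucial novelty compared with the classical Stein--Tomas problem is that $p$ lies \emph{strictly below} the new endpoint $\tilde p := \frac{2(d+m)}{d+m+2}$ of Theorem \ref{thm:main}. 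Together with the uniform Fourier decay from Corollary \ref{cor:decay}, this will be used to show that $F \neq 0$.

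For every fixed $x \in \R^d$, the function $e^{ix\cdot}|_{\sph{d-1}}$ lies in $L^2(\sph{d-1})$, so weak convergence yields pointwise convergence $\widehat{F_n\sigma}(x) \to \widehat{F\sigma}(x)$ at every $x$. To prove non-vanishing, fix $\varepsilon > 0$ and apply Corollary \ref{cor:decay} to the normalized $F_n$: there exists $R < \infty$, independent of $n$, with $|\widehat{F_n\sigma}(x)| \leq \varepsilon$ for $|x| > R$. Dualizing Theorem \ref{thm:main} yields the uniform bound $\|\widehat{F_n\sigma}\|_{L^{\tilde p'}(\R^d)} \lesssim 1$, and since $p' > \tilde p'$,
\[
\int_{|x|>R} |\widehat{F_n\sigma}(x)|^{p'}\,\d x \leq \varepsilon^{p'-\tilde p'}\|\widehat{F_n\sigma}\|_{L^{\tilde p'}}^{\tilde p'} \lesssim \varepsilon^{p'-\tilde p'}.
\]
On $B_R = \{|x| \leq R\}$, the pointwise convergence $\widehat{F_n\sigma} \to \widehat{F\sigma}$ together with the trivial bound $|\widehat{F_n\sigma}| \leq \|F_n\|_{L^1(\sph{d-1})} \lesssim 1$ gives, via dominated convergence, $\int_{B_R} |\widehat{F_n\sigma}|^{p'}\,\d x \to \int_{B_R} |\widehat{F\sigma}|^{p'}\,\d x$. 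Were $F = 0$, the right-hand side would vanish, and sending $n \to \infty$ and then $\varepsilon \to 0$ would give $\|\widehat{F_n\sigma}\|_{L^{p'}} \to 0$, contradicting $T > 0$.

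Once $F \neq 0$ is secured, the rest is structural. The pointwise a.e.\ convergence $\widehat{F_n\sigma} \to \widehat{F\sigma}$ and uniform $L^{p'}$-boundedness activate the Brézis--Lieb lemma, producing
\[
\|\widehat{F_n\sigma}\|_{L^{p'}}^{p'} - \|\widehat{(F_n-F)\sigma}\|_{L^{p'}}^{p'} - \|\widehat{F\sigma}\|_{L^{p'}}^{p'} \longrightarrow 0.
\]
Weak convergence in the Hilbert space $L^2_{G_k}(\sph{d-1})$ gives the parallelogram identity $\|F_n\|_{L^2}^2 = \|F\|_{L^2}^2 + \|F_n - F\|_{L^2}^2 + o(1)$. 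Writing $a := \|F\|_{L^2}^2$ and $b := \lim_n \|F_n - F\|_{L^2}^2 = 1 - a$, and using the operator-norm bound $\|\widehat{G\sigma}\|_{L^{p'}} \leq T\|G\|_{L^2}$ applied to the $G_k$-symmetric functions $G = F$ and $G = F_n - F$, one obtains
\[
T^{p'} \leq T^{p'}\bigl(a^{p'/2} + b^{p'/2}\bigr).
\]
Since $p < \tilde p < 2$ forces $p'/2 > 1$, the inequality $a^{p'/2} + b^{p'/2} \leq a + b = 1$ is strict unless $ab = 0$. Combined with $a > 0$, this forces $b = 0$, i.e.\ strong convergence $F_n \to F$ in $L^2_{G_k}(\sph{d-1})$; in particular $F$ is a maximizer.

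The main obstacle I anticipate is the non-vanishing step: its workability rests entirely on the strict inequality $p < \tilde p$, which supplies the decisive gain $\varepsilon^{p'-\tilde p'}$ needed to suppress $L^{p'}$-mass escaping to infinity. This is exactly why the endpoint case $p = \tilde p$ remains open (as flagged in the paragraph after Theorem \ref{thm:G_kExistence}): at $p = \tilde p$ one loses this gain, and some finer concentration-compactness argument—presumably in the spirit of Shao \cite{Sh16} or Frank--Lieb--Sabin \cite{FLS16}, but adapted to the $G_k$-symmetric setting—would be required.
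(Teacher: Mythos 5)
Your proof is correct and shares the paper's overall structure: the two essential inputs are the improved Stein--Tomas inequality of Theorem~\ref{thm:main} and the uniform Fourier decay of Corollary~\ref{cor:decay}, and once the weak limit is shown to be nonzero, the Br\'ezis--Lieb/Fanelli--Vega--Visciglia mechanism finishes the job. The genuine divergence is in how non-vanishing is established. The paper interpolates $\|\widehat{F_n\sigma}\|_{L^{p'}}\le\|\widehat{F_n\sigma}\|_{L^{p_\star'}}^\theta\|\widehat{F_n\sigma}\|_{L^\infty}^{1-\theta}$ (with $p_\star=\frac{2(d+m)}{d+m+2}$, your $\tilde p$) to force a uniform lower bound $\|\widehat{F_n\sigma}\|_{L^\infty}\ge\varepsilon_0>0$, locates the near-maxima $x_n$ in a fixed ball via Corollary~\ref{cor:decay}, and then needs uniform boundedness and equicontinuity of $\widehat{F_n\sigma}$ (via the gradient bound) plus Arzel\`a--Ascoli to extract a locally uniformly convergent subsequence whose nonzero limit is then matched against the weak limit in $L^{p'}$. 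You instead convert the same gap $p'>\tilde p'$ into the tail estimate $\int_{|x|>R}|\widehat{F_n\sigma}|^{p'}\lesssim\varepsilon^{p'-\tilde p'}$ and close via dominated convergence on $B_R$, using the pointwise convergence $\widehat{F_n\sigma}(x)\to\widehat{F\sigma}(x)$ obtained by testing the weak limit against $\omega\mapsto e^{ix\cdot\omega}\in L^2(\sph{d-1})$. Your variant is leaner --- it avoids Arzel\`a--Ascoli and the equicontinuity estimate --- and makes the role of the strict inequality $p<\tilde p$ somewhat more transparent; as you correctly observe, this is exactly where both arguments degenerate at the endpoint $p=\tilde p$. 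For the compactness endgame, the paper cites \cite[Proposition~1.1]{FVV11} in black-box form, while you reproduce the Br\'ezis--Lieb argument directly (the strict convexity of $t\mapsto t^{p'/2}$ for $p'>2$ is what forces $ab=0$); the two are equivalent.
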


\noindent Theorem \ref{thm:G_kExistenceAdj} is in fact equivalent to Theorem \ref{thm:G_kExistence} via a well-known
argument; see \cite[\S~6]{St20} for a more general statement along these lines.
For the convenience of the reader, we present the details of the relevant implication in Appendix \ref{ref:appendix}. 
The proof of Theorem \ref{thm:G_kExistenceAdj} relies on \cite[Proposition 1.1]{FVV11}, which is
a useful reformulation of the Br\'ezis--Lieb lemma \cite[Theorem 1]{BL83}.

\begin{proof}[Proof of Theorem \ref{thm:G_kExistenceAdj}]
Let $(F_n)_{n\in\N}\subset L_{G_k}^2(\sph{d-1})$ be an $L^2$-normalized maximizing sequence for ${\bf
T}_{d,k}^\ast(p)$, i.e., $F_n$ is $G_k$-symmetric, $\|F_n\|_{L^2(\sph{d-1})}=1$ for all $n\in\N$, and
\begin{equation}\label{eq:E1}
\|\widehat{F_n\sigma}\|_{L^{p'}(\R^d)}\to{\bf T}_{d,k}^\ast(p), \text{ as }n\to \infty.
\end{equation}
From Theorem \ref{thm:main} in dual form and $L^2$-normalization of $(F_n)_{n\in\N}$, there exists $C_{k,d}<\infty$ such that
\begin{equation}\label{eq:E2}
\sup_{n\in\N} \| \widehat{F_n\sigma} \|_{L^{p_\star'}(\R^d)} <C_{k,d},
\end{equation}
where $p_\star=\frac{2(d+m)}{d+m+2}$ and $m=\min\{d-k,k\}$.
By convexity of $L^p$-norms, we further have
\begin{equation}\label{eq:E3}
\|\widehat{F_n\sigma}\|_{L^{p'}(\R^d)}\leq \|\widehat{F_n\sigma}\|_{L^{p_\star'}(\R^d)}^\theta
\|\widehat{F_n\sigma}\|_{L^\infty(\R^d)}^{1-\theta},
\end{equation}
where 
$\theta\in (0,1)$ is given by $\frac{1}{p'}=\frac{\theta}{p_\star'}+\frac{1-\theta}{\infty}$, i.e.,
$\theta=\frac{p_\star'}{p'}$.
Estimates \eqref{eq:E1}, \eqref{eq:E2}, \eqref{eq:E3} together imply the existence of $\varepsilon_0=\varepsilon_0(k,d)>0$, depending only on $k,d$, for which
$\|\widehat{F_n\sigma}\|_{L^\infty(\R^d)} \geq \varepsilon_0$
for every $n\in \N$ (possibly after extraction of a subsequence).
Thus there exists a sequence $(x_n)_{n\in\N}\subset\R^d$, such that 
\begin{equation}\label{eq:nonzero}
|\widehat{F_n\sigma}(x_n)|\geq \varepsilon_0>0, \text{ for every } n\in\N.
\end{equation}
Corollary \ref{cor:decay} guarantees\footnote{Note the tension between estimates \eqref{eq:Pointwise} and \eqref{eq:nonzero} which, in particular, reveals that a maximizing sequence for ${\bf T}_{d,k}^\ast{(p)}$ cannot {\it concentrate} on a copy of $\sph{\min\{d-k,k\}-1}$ inside $\sph{d-1}$. If $k\in\{1,d-1\}$, this would amount to concentration at a pair of antipodal points on $\sph{d-1}$, which  in \cite{CS12, FLS16} was identified as the ``most essential obstacle'' to the precompactness of maximizing sequences modulo symmetries {when $p=p_d$}.} the existence of a certain radius $R=R(k,d)<\infty$, depending only on $k,d$, for which
 $|x_n|\leq R$ for every $n\in\N$.
On the other hand, by the Cauchy--Schwarz inequality together with the $L^2$-normalization of $(F_n)_{n\in\N}$, we have that 
\begin{align*}
&\|\widehat{F_n\sigma}\|_{L^\infty(\R^d)}
 \leq \|F_n\|_{L^1(\sph{d-1})}
 {\leq \sigma(\sph{d-1})^{\frac{1}{2}}\|F_n\|_{L^2(\sph{d-1})}= 
 \sigma(\sph{d-1})^{\frac{1}{2}}};\\
&\|\nabla_x(\widehat{F_n\sigma})\|_{L^\infty(\R^d)}\leq\|F_n|\cdot|\|_{L^1(\sph{d-1})}\leq \|F_n\|_{L^1(\sph{d-1})}
{\leq  \sigma(\sph{d-1})^{\frac{1}{2}}}.
 \end{align*}
It follows that the sequence $(\widehat{F_n\sigma})_{n\in\N}$ is uniformly bounded and equicontinuous on the cube $Q_R:=[-R,R]^d\subset\R^d$. The Arzel\`a--Ascoli Theorem on compact subsets of $\R^d$ then implies that the sequence $(\widehat{F_n\sigma})_{n\in\N}$ has a subsequence which converges uniformly to a limit in $Q_R$. That this limit is nonzero follows at once from \eqref{eq:nonzero} and the fact that $(x_n)_{n\in\N}\subset Q_R$.

\medskip

Now, since the sequence $(F_n)_{n\in\N}$ is bounded on $L_{G_k}^2(\sph{d-1})$, it has a weakly convergent subsequence. 
In other words, there exists a function $F_\star\in L^2_{G_k}(\sph{d-1})$, such that $F_n\rightharpoonup F_\star$ weakly in $L^2_{G_k}(\sph{d-1})$, as $n\to\infty$.
Since the extension operator is bounded from $L^2_{G_k}(\sph{d-1})$ to $L^{p'}(\R^d)$, it follows that
$\widehat{F_n\sigma}\rightharpoonup \widehat{F_\star\sigma}$ weakly in $L^{p'}(\R^d)$, as $n\to\infty$.
Since uniform convergence implies weak convergence, and weak limits are unique, 
from the previous paragraph it follows that $\widehat{F_\star\sigma}$ is nonzero, and so the function $F_\star$ is itself nonzero.

\medskip

We are now in a position to apply \cite[Proposition 1.1]{FVV11} to the extension operator on $\sph{d-1}$ with
$\mathcal H=L^2_{G_k}(\sph{d-1})$, {{$p=p_d'\in(2,\infty)$}}, $(h_n)_{n\in\N}=(F_n)_{n\in\N}$, and
$\overline h=F_\star$.
Hypotheses (1) and (2) from \cite[Proposition 1.1]{FVV11} hold by the assumptions on the sequence
$(F_n)_{n\in\N}$, and hypothesis (3) follows from the previous paragraph.
Finally, hypothesis (4) is an easy consequence of the compactness of $\sph{d-1}$.
The conclusion is that, possibly after extraction of a subsequence, $F_n\to F_\star$ in $L^2_{G_k}(\sph{d-1})$, as $n\to\infty$.
In particular, $F_\star$ is a maximizer for ${\bf T}_{d,k}^\ast(p)$.
This finishes the proof of the theorem.
\end{proof} 

\section{Proof of Theorem \ref{thm:Nec}}\label{sec:ProofThm2}

In this section, we construct appropriate examples to show the necessity of conditions (i)--(iii) in the statement of Theorem \ref{thm:Nec}.
We work with the extension operator \eqref{eq:ExtensionOp} rather than with the restriction operator directly, and aim to show that the estimate
\begin{equation*}
\|\widehat{F\sigma}\|_{L^{p'}(\R^d)}\leq C(k,d,p,q)  \|F\|_{L^{q'}(\sph{d-1})},
\end{equation*}
which is dual to \eqref{eq:GkRestriction},
can only hold for every $G_k$-symmetric function $F:\sph{d-1}\to\Co$ provided 
$d,p,q$, and $m=\min\{d-k,k\}$ are chosen in such a way that conditions (i)--(iii) in the statement of Theorem \ref{thm:Nec} hold.
As in the general non-symmetric situation,  the first condition $\frac{d+1}{2d}<\frac1p$ is dictated by the choice $F\equiv 1$, since 
\begin{equation}\label{eq:IdNec}
\widehat{\sigma}\in L^{p'}(\R^d) \text{ if and only if } p'>\frac{2d}{d-1}.
\end{equation}
The latter equivalence follows from identity~\eqref{eq:SigmaHat} together with the standard asymptotics of
Bessel functions at zero and infinity; recall \eqref{eq:BesselAsymp}--\eqref{eq:BesselAsympNear0}.

\medskip

The remaining necessary conditions are obtained from analyzing a $G_k$-symmetric variant of Knapp's
construction. Let $d\geq 4$, $k\in\{2,3,\ldots,d-2\}$ be given, and assume without loss of generality that $m=k$, which we
take as fixed from now onwards.
Given $\delta\in(0,\frac12)$, consider the following union of two ``spherical caps'' of radius $\delta$:  
\[
  \mathcal C_{\delta}:=\{(\eta,\zeta)\in\R^{d-k}\times\R^k: |\eta|^2+|\zeta|^2=1,
  |\eta|<\delta\}\subset\sph{d-1}.
\] 
By construction, the set $\mathcal C_{\delta}$ is $G_k$-symmetric,
and  so are the indicator function $\mathbbm{1}_{\delta}:=\mathbbm{1}_{\mathcal C_{\delta}}$ and its
Fourier extension, $\widehat{\mathbbm{1}_{\delta}\sigma}$.
Using Lemma~\ref{lem:SphInt}, we estimate: 
\begin{equation}\label{eq:DenEst}
\|\mathbbm{1}_{\delta}\|_{L^{q'}(\sph{d-1})}
 = \sigma(\mathcal C_{\delta})^{\frac1{q'}}
\simeq \left(\int_0^\delta r^{d-k-1} (1-r^2)^{\frac{k-2}2} \d r\right)^{\frac1{q'}}
\simeq \delta^{\frac{d-k}{q'}}.
\end{equation}
On the other hand, if $(y,z)\in\R^{d-k}\times\R^k$, then \eqref{eq:SigmaHat} together with a further
application of Lemma~\ref{lem:SphInt} yield:
\begin{multline}\label{eq:PreLB}
\widehat{\mathbbm{1}_{\delta}\sigma}(y,z)
=\int_{\mathcal C_{\delta}} e^{i(y,z)\cdot(\eta,\zeta)}\d\sigma(\eta,\zeta)\\
\simeq \int_0^\delta r^{d-k-1} (1-r^2)^{\frac{k-2}2} (r|y|)^{\frac{2-d+k}2} J_{\frac{d-k-2}2}(r|y|) (\sqrt{1-r^2}|z|)^{\frac{2-k}2} J_{\frac{k-2}2}(\sqrt{1-r^2}|z|)\d r.
\end{multline}
Let $\{z_j\}_{j\geq 1}$ denote the increasing sequence of local maxima of the Bessel function
$J_{(k-2)/2}$.
By the asymptotic expansion \eqref{eq:BesselAsymp}, there exist constants $0<c,C<\infty$, such that
$|z_j-2\pi j|\leq C$, as $j\to\infty$, and moreover
$J_{\frac{k-2}2}(z_j)\geq c j^{-\frac12}$,
 for every $j\geq 0$.
Recalling \eqref{eq:BesselAsympNear0} and shrinking $c$ if necessary,  we obtain  
\begin{align}
&z^{\frac{2-d+k}2} J_{\frac{d-k-2}2}(z)\geq c,\qquad \text{ for every } z\in(0,c),\label{eq:Est2}\\
&z^{\frac{2-k}2} J_{\frac{k-2}2}(z)\geq cj^{\frac{1-k}2},\qquad 
  \text{ for every } z\in[z_j-c,z_j+c] \text{ and } j\geq 0.\label{eq:Est1}
  \end{align}
Consider the disjoint union $E:=\bigcup_{j=1}^{\lfloor c\delta^{-2}\rfloor} E_{j}$, where
each set $E_{j}$ is defined as follows:
\[E_{j}:=\left\{(y,z)\in\R^{d-k}\times\R^k: 0\leq |y|\leq c\delta^{-1},\frac{z_j-c}{\sqrt{1-\delta^2}}\leq |z|\leq z_j+c\right\}.\]
Each set $E_{j}$ is $G_k$-symmetric, and so is $E$.
If $(y,z)\in E$, then estimates {{\eqref{eq:Est2}--\eqref{eq:Est1}}} applied to \eqref{eq:PreLB} imply the following lower bound:
\[
  |\widehat{\mathbbm{1}_{\delta}\sigma}(y,z)|\gtrsim \delta^{d-k} j^{\frac{1-k}2},
\]
provided $\delta\in(0,c)$ is chosen sufficiently small. 
On the other hand, there exists an index $j_0$
 with the following property:
for each $j\in\{j_0,j_0+1,\ldots,\lfloor c\delta^{-2}\rfloor\}$, 
\[(z_j+c)^k-\left(\frac{z_j-c}{\sqrt{1-\delta^2}}\right)^k\gtrsim  j^{k-1},\]
provided $\delta>0$ is chosen sufficiently small. 
This follows directly from Taylor expansion, and readily implies the size estimate 
$|E_j|\gtrsim \delta^{k-d}j^{k-1}$,
 for each $j_0\leq j\leq \lfloor c\delta^{-2}\rfloor$. 
As a consequence,
\begin{align*}
\|\widehat{\mathbbm{1}_{\delta}\sigma}\|_{L^{p'}(\R^d)}^{p'}
&\geq\|\widehat{\mathbbm{1}_{\delta}\sigma}\|_{L^{p'}(E)}^{p'}
\gtrsim \sum_{j=j_0}^{\lfloor c\delta^{-2}\rfloor} (\delta^{d-k}j^{\frac{1-k}2})^{p'} |E_{j}|\\ 
&\gtrsim\sum_{j=j_0}^{\lfloor c\delta^{-2}\rfloor} (\delta^{d-k}j^{\frac{1-k}2})^{p'} (\delta^{k-d}j^{k-1}) 
 \\
&=\delta^{(d-k)(p'-1)}\sum_{j=j_0}^{\lfloor c\delta^{-2}\rfloor} j^{-\frac{(k-1)(p'-2)}2}.
\end{align*}
According to $\frac{(k-1)(p'-2)}2$ being smaller than, equal to, or larger than $1$, we thus
obtain
\begin{displaymath}
\|\widehat{\mathbbm{1}_{\delta}\sigma}\|_{L^{p'}(\R^d)} \gtrsim \left\{ \begin{array}{ll}
\delta^{\frac{d+k}p-k-1}, & \textrm{if $\frac1p<\frac{k+1}{2k}$},\\
\delta^{\frac{d+k}p-k-1}|\log(\delta)|^{\frac1{p'}}, & \textrm{if $\frac1p=\frac{k+1}{2k}$},\\
\delta^{\frac{d-k}p}, & \textrm{if $\frac1p>\frac{k+1}{2k}$}.
\end{array} \right.
\end{displaymath}
The latter estimate is valid also when $p'=\infty$. 
Together with \eqref{eq:DenEst}, we finally conclude:
\begin{displaymath}
\frac{\|\widehat{\mathbbm{1}_{\delta}\sigma}\|_{L^{p'}(\R^d)}}{\|\mathbbm{1}_{\delta}\|_{L^{q'}(\sph{d-1})}} \gtrsim \left\{ \begin{array}{ll}
\delta^{\frac{d+k}p+\frac{d-k}q-d-1}, & \textrm{if $\frac1p<\frac{k+1}{2k}$},\\
\delta^{\frac{d+k}p+\frac{d-k}{q}-d-1}|\log(\delta)|^{\frac1{p'}}, & \textrm{if $\frac1p=\frac{k+1}{2k}$},\\
\delta^{\frac{d-k}p+\frac{d-k}{q}-d+k}, & \textrm{if $\frac1p>\frac{k+1}{2k}$}.
\end{array} \right.
\end{displaymath}
The proof of Theorem \ref{thm:Nec} is completed by letting $\delta\to 0^+$.

\begin{remark}\label{rem:G1Knapp}
  \emph{If  $m=\min\{k,d-k\}=1$, then the following $G_1$-symmetric version of Knapp's construction  
  reveals that estimates beyond those predicted by the restriction conjecture, recall \eqref{eq:FRC}--\eqref{eq:FRCNumerology}. are not possible within the 
  $G_1$-symmetric setting. 
  Define    
  \[\mathcal C_\delta:=\{(\eta,\zeta)\in\R^{d-1}\times\R: |\eta|^2+\zeta^2=1, |\eta|<\delta
  \}\subset\sph{d-1}\] and 
  $E:=\bigcup_{j=1}^{\lfloor(2\delta)^{-2}\rfloor} E_j$, 
  where  
  \[E_j:=\left\{(y,z)\in\R^{d-1}\times\R: 0\leq |y|\leq\frac{\pi}{4\delta},\frac{2\pi
  j-\frac{\pi}4}{\sqrt{1-\delta^2}}\leq |z|\leq2\pi j+\frac{\pi}4\right\}.\] 
  Here, $\delta>0$ is a sufficiently small parameter, and the values $\{2\pi j\}_{j\geq 1}$ 
  are the counterparts of the Bessel maxima $\{z_j\}_{j\geq 1}$ considered above. 
  Naturally, the sets $\mathcal C_\delta$ and $E$ are both $G_1$-symmetric.
  Repeating the steps from the proof of Theorem \ref{thm:Nec}, one
  finds that $|E_j|\gtrsim \delta^{1-d}$ and thus
  $\|\widehat{\mathbbm{1}_\delta\sigma}\|_{L^{p'}(\R^d)}^{p'}\gtrsim\delta^{(d-1)p'-(d+1)}$.
  In turn, this implies the following lower bound:
  \[\frac{\|\widehat{\mathbbm{1}_\delta\sigma}\|_{L^{p'}(\R^d)}}{\|\mathbbm{1}_\delta\|_{L^{q'}(\sph{d-1})}}
  \gtrsim \delta^{\frac{d+1}{p}+\frac{d-1}{q}-d-1}.\]
  The latter quotient remains bounded as  $\delta\to 0^+$ if and only if
  $\frac{d+1}{p}+\frac{d-1}{q}\geq d+1$, which matches the second condition in \eqref{eq:FRCNumerology}. 
  All in all, we recover the same necessary conditions as in the general, non-symmetric case.   }
\end{remark}


\section{Proof of Theorem \ref{thm:easy}}\label{sec:ProofThm3}

In this section, we provide a short proof of Theorem \ref{thm:easy}.
No generality is lost in assuming $m=k$, $f\in\mathcal S(\R^d)$, and
throughout the proof we set $p:=\frac{2k}{k+1}$. The representation formula from
Lemma~\ref{lem:GkSymFT} and the bounds \eqref{eq:BesselAsymp}--\eqref{eq:BesselAsympNear0} for Bessel functions together imply
\begin{align*}
|\widehat{f}(\eta,\zeta)|
&\simeq |\eta|^{\frac{2-d+k}{2}} |\zeta|^{\frac{2-k}2} 
\left|\int_0^\infty \int_0^\infty \rho_1^{\frac{d-k}2}\rho_2^{\frac k2}  f_0(\rho_1,\rho_2) J_{\frac{d-k-2}{2}}(\rho_1|\eta|)J_{\frac{k-2}2}(\rho_2|\zeta|)\d\rho_1\d\rho_2\right|\\
&\lesssim  \int_0^\infty\int_0^\infty \rho_1^{d-k-1}\rho_2^{k-1}|f_0(\rho_1,\rho_2)|(1+\rho_1|\eta|)^{\frac{k-d+1}2}(1+\rho_2|\zeta|)^{\frac{1-k}2} \d\rho_1\d\rho_2.
\end{align*}
{{If $k<\frac d2$, then}} H\"older's inequality in Lorentz spaces \cite[Theorem 3.4]{ON63} implies the following pointwise bound for $|\widehat{f}(\eta,\zeta)|$:
\begin{align}
\|\rho_1^{\frac{d-k-1}{p}}\rho_2^{\frac{k-1}p}&f_0(\rho_1,\rho_2)\|_{L^{p,1}_{\rho_1,\rho_2}(\R_+^2)}
\|\rho_1^{\frac{d-k-1}{p'}}\rho_2^{\frac{k-1}{p'}}(1+\rho_1|\eta|)^{\frac{k-d+1}2}(1+\rho_2|\zeta|)^{\frac{1-k}2}\|_{L^{p',\infty}_{\rho_1,\rho_2}(\R_+^2)}\notag\\
&= \|f\|_{L^{p,1}(\R^d)} \|(1+|y||\eta|)^{\frac{k-d+1}2}(1+|z||\zeta|)^{\frac{1-k}2}\|_{L^{p',\infty}(\R^d)}\notag\\
&\simeq \|f\|_{L^{p,1}(\R^d)} |\eta|^{\frac{k-d}{p'}}|\zeta|^{-\frac{k}{p'}},\label{eq_Est1}
\end{align}
where the last line follows from changing variables $y\rightsquigarrow |\eta|y\in\R^{d-k}$ and $z\rightsquigarrow
|\zeta|z\in\R^k$, and using the facts that  $2k{{<}} d$ and $p'=\frac{2k}{k-1}$ in order to control the corresponding
weak quasi-norm. 
{{If $k=\frac d2$, then Hölder's inequality in mixed Lorentz spaces \cite[Prop.\@ 6.1]{LF77} implies\footnote{Recall the definition \eqref{eq_Xp} of the space $X_p$.} the following pointwise bound for $|\widehat f(\eta,\zeta)|$:
\begin{align}
\|\rho_1^{\frac{d-k-1}{p}}\rho_2^{\frac{k-1}p}&f_0(\rho_1,\rho_2)\|_{L_{\rho_1}^{p,1}(\R_+; L_{\rho_2}^{p,1}(\R_+))}\notag\\
&\times\|\rho_1^{\frac{d-k-1}{p'}}\rho_2^{\frac{k-1}{p'}}(1+\rho_1|\eta|)^{\frac{k-d+1}2}(1+\rho_2|\zeta|)^{\frac{1-k}2}\|_{L_{\rho_1}^{p',\infty}(\R_+; L_{\rho_2}^{p',\infty}(\R_+))}\notag\\
&\lesssim \|f\|_{X_p} \|(1+|y||\eta|)^{\frac{k-d+1}2}(1+|z||\zeta|)^{\frac{1-k}2}\|_{L^{p',\infty}(\R^{d-k}; L^{p',\infty}(\R^k))}\notag\\
&\simeq \|f\|_{X_p} |\eta|^{\frac{k-d}{p'}}|\zeta|^{-\frac{k}{p'}}.\label{eq_Est2}
\end{align}
Estimates \eqref{eq_Est1}--\eqref{eq_Est2}}} can be integrated over the unit sphere, resulting in
\begin{align*}
\|\widehat{f}\|_{L^{p',\infty}(\sph{d-1})}
\lesssim \|f\|_{{{ X_p }}} \||\eta|^{\frac{k-d}{p'}}|\zeta|^{-\frac{k}{p'}}\|_{L^{p',\infty}(\sph{d-1})}
\lesssim \|f\|_{{{ X_p }}}.
\end{align*}
This concludes the proof of Theorem~\ref{thm:easy}. 


\section*{Acknowledgements}
RM\@ is funded by the Deutsche Forschungsgemeinschaft (DFG, German Research Foundation) -- Project-ID 258734477 -- SFB 1173.
DOS\@ is supported by the EPSRC New Investigator Award ``Sharp Fourier Restriction Theory'', grant no.\@ EP/T001364/1,
and the DFG under Germany's Excellence Strategy -- EXC-2047/1 -- 390685813,
and is grateful to Giuseppe Negro, {Andreas Seeger} and Mateus Sousa for valuable discussions during the preparation of this work.
{The authors thank the anonymous referee for carefully reading the manuscript and valuable suggestions.}


\appendix
\section{Theorem \ref{thm:G_kExistenceAdj} implies Theorem \ref{thm:G_kExistence}}\label{ref:appendix}
Let $(f_n)_{n\in\N}\subset L^{p}_{G_k}(\R^d)$ be a maximizing sequence for ${\bf T}_{d,k}(p)$,
normalized in $L^{p}(\R^d)$.
In other words, each $f_n$ is $G_k$-symmetric in $\R^d$, $\|f_n\|_{L^{p}(\R^d)}=1$ for every $n\in\N$,
and
\begin{equation}\label{eq:fnMaxSeq}
\|\widehat{f}_n\|_{L^2(\sph{d-1})}\to{\bf T}_{d,k}(p), \text{ as } n\to\infty.
\end{equation}
Let $F_n:=\|\widehat f_n\|_{L^2(\sph{d-1})}^{-1}\widehat{f}_n\mid_{\sph{d-1}}$. 
Then $(F_n)_{n\in\N}\subset L^2_{G_k}(\sph{d-1})$ is an $L^2$-normalized maximizing sequence for ${\bf
T}_{d,k}^\ast(p)$, i.e.\@ each $F_n$ is $G_k$-symmetric on $\sph{d-1}$, $\|F_n\|_{L^2(\sph{d-1})}=1$ for
every $n\in\N$, and $\|\widehat{F_n\sigma}\|_{L^{p'}(\R^d)}\to {\bf T}_{d,k}^\ast(p), \text{ as
}n\to\infty$.
By Theorem \ref{thm:G_kExistenceAdj}, there exists $F_\star\in L^2_{G_k}$ such that, possibly after
extraction of a subsequence, $F_n\to F_\star$ in $L^2(\sph{d-1})$, as $n\to\infty$. In particular, observe
that $\|F_\star\|_{L^2(\sph{d-1})}=1$ and
\begin{equation}\label{eq:InTheLimit}
\|\widehat{F_\star\sigma}\|_{L^{p'}(\R^d)}={\bf T}_{d,k}^\ast(p).
\end{equation} 
Since the sequence $(f_n)_{n\in\N}$ is bounded on $L^{p}_{G_k}(\R^d)$, it has a weakly convergent
subsequence.
In other words, there exists a function $f_\star\in L_{G_k}^{p}(\R^d)$, such that $f_n\rightharpoonup
f_\star$ weakly in $L^{p}_{G_k}(\R^d)$, as $n\to\infty$.
We claim that $f_\star$ is a maximizer for ${\bf T}_{d,k}(p)$, and that in fact $f_n\to f_\star$ strongly
in $L^{p}_{G_k}(\R^d)$, as $n\to\infty$.
To see this, note that
\begin{multline}\label{eq:chain}
{\bf T}_{d,k}(p)^2
=\lim_{n\to\infty} \|\widehat{f}_n\|_{L^2(\sph{d-1})}^2
={\bf T}_{d,k}(p)\lim_{n\to\infty} |\langle f_n, \widehat{F_n\sigma}\rangle|\\
= {\bf T}_{d,k}(p)|\langle f_\star,\widehat{F_\star\sigma}\rangle|
\leq {\bf T}_{d,k}(p)\|f_\star\|_{L^{p}(\R^d)} \|\widehat{F_\star\sigma}\|_{L^{p'}(\R^d)}
=\|f_\star\|_{L^{p}(\R^d)} {\bf T}^2_{d,k}(p).
\end{multline}
Here we used \eqref{eq:fnMaxSeq}, duality, weak convergence of $(f_n)_{n\in\N}$ and continuity of the
extension operator, H\"older's inequality, and \eqref{eq:InTheLimit} together with ${\bf
T}_{d,k}^\ast(p)={\bf T}_{d,k}(p)$. From the chain of inequalities \eqref{eq:chain}, we read off that
\begin{equation}\label{eq:NormLimit}
\|f_\star\|_{L^{p}(\R^d)}\geq 1=\lim_{n\to\infty} \|f_n\|_{L^p(\R^d)}.
\end{equation}
Since the reverse inequality holds since $f_n\rightharpoonup f_\star$ weakly in $L^{p'}_{G_k}(\R^d)$, as
$n\to\infty$, we actually have equality in \eqref{eq:NormLimit}.
But weak convergence together with convergence of norms implies strong convergence; see \cite[Theorem 2.11]{LL01}. 
Therefore $f_n\to f_\star$ in $L^p_{G_k}(\R^d)$, as $n\to\infty$.
By continuity of the restriction operator, it follows that $f_\star$ is a maximizer for ${\bf
T}_{d,k}(p)$, as desired.
This concludes the proof that Theorem \ref{thm:G_kExistenceAdj} implies Theorem \ref{thm:G_kExistence}.


\end{document}